\date{\today } 
\newif\ifdraft
\newcommand{\red}{} 
\renewcommand{\red}{\color{red}}
\numberwithin{equation}{section}
\newtheorem{theorem}{Theorem}[section]
\newtheorem{corollary}[theorem]{Corollary}
\newtheorem{lemma}[theorem]{Lemma}
\newtheorem{sublemma}[theorem]{Sublemma}
\newtheorem{proposition}[theorem]{Proposition}
\newtheorem{definition}[theorem]{Definition}
\theoremstyle{remark}
\newtheorem{remark}[theorem]{Remark}
\newcommand{\nwc}{\newcommand}
\nwc{\pdfx}[2]{\texorpdfstring{#1}{#2}} 
\nwc{\hide}[1]{}  
\nwc{\qref}[1]{(\ref{#1})}
\nwc{\ip}[1]{\langle #1 \rangle}
\nwc{\D}{\partial}
\nwc{\grad}{\nabla}
\nwc{\eps}{\varepsilon}
\nwc{\inv}{^{-1}}
\nwc{\wkto}{\xrightharpoonup{\star}}
\nwc{\tr}{\mathop{\rm tr}\nolimits}
\nwc{\re}{\mathop{\rm Re}\nolimits}
\nwc{\im}{\mathop{\rm Im}\nolimits}
\nwc{\one}{{\mathbbm{1}}}
\nwc{\calA}{{\mathcal A}}
\nwc{\calB}{{\mathcal B}}
\nwc{\calC}{{\mathcal C}}
\nwc{\calH}{{\mathcal H}}
\nwc{\calL}{{\mathcal L}}
\nwc{\calK}{{\mathcal K}}
\nwc{\calM}{{\mathcal M}}
\nwc{\calP}{{\mathcal P}}
\nwc{\calU}{{\mathcal U}}
\newcommand{\R}{\mathbb{R}}
\newcommand{\N}{\mathbb{N}}
\newcommand{\te}{\textrm}
\newcommand{\tacka}{\,\cdot\,}
\newcommand{\veps}{\varepsilon}
\newcommand{\ulam}{\underline \lambda}
\newcommand{\olam}{\overline \lambda}
\newcommand{\oor}{\overline r}
\newcommand{\tT}{S^\eps} 
\DeclareMathOperator{\id}{id}
\DeclareMathOperator{\supp}{supp}
\nwc{\esssup}{\mathop{\rm ess\, sup}}
\DeclareMathOperator{\diam}{diam}
\DeclareMathOperator{\diag}{diag}
\DeclareMathOperator{\dive}{div}
\DeclareMathOperator{\Hess}{Hess}
\definecolor{mygreen}{rgb}{0.1,0.75,0.2}
\newcommand{\Qdom}{Q} 
\newcommand{\supn}{^{n}}
\nwc{\wksto}{\xrightharpoonup{\star}}
\nwc{\tlpto}{\overset{{T\!L^p}}{\longrightarrow} }
\nwc{\tltwoto}{\overset{{T\!L^2}}{\longrightarrow} }
\nwc{\tlp}{{T\!L^p}}
\nwc{\tltwo}{{T\!L^2}}
\date{\today} 
\ifdraft \date{WORKING DRAFT \today: PLEASE DO NOT CIRCULATE} \fi
\begin{document}

\ifdraft
\title[DRAFT \mmddyyyydate\today\quad \currenttime \ \ PLEASE DO NOT CIRCULATE]
{Least action principles for incompressible flows and geodesics between shapes}

\else
\title[Least action, incompressible flow and geodesics]
{Least action principles for incompressible flows and geodesics between shapes}
\fi

\author{Jian-Guo Liu}
\address{
Department of Physics and Department of Mathematics\\
Duke University, Durham, NC 27708, USA}
\email{jliu@phy.duke.edu}

\author[Jian-Guo Liu, Robert L. Pego and Dejan Slep{\v{c}}ev]{Robert L. Pego \ \ and \ \  Dejan Slep{\v{c}}ev}
\address{Department of Mathematical Sciences and Center for Nonlinear Analysis\\
Carnegie Mellon University, Pittsburgh, Pennsylvania, PA 12513, USA}
\email{rpego@cmu.edu, slepcev@math.cmu.edu}

\begin{abstract}  %
As V.~I.~Arnold observed in the 1960s, the Euler equations
of incompressible fluid flow correspond formally to geodesic equations
in a group of volume-preserving diffeomorphisms.
Working in an Eulerian framework, we study incompressible flows
of shapes as critical paths for action (kinetic energy)
along transport paths constrained to have characteristic-function densities.
The formal geodesic equations for this problem
are Euler equations for incompressible, inviscid potential flow
of fluid with zero pressure and surface tension on the free boundary.
The problem of minimizing this action exhibits an instability associated with
microdroplet formation, with the following outcomes:
Any two shapes of equal volume can be approximately connected by 
an Euler spray---a countable superposition of ellipsoidal geodesics.
The infimum of the action is the Wasserstein distance squared,
and is almost never attained except in dimension 1.
Every Wasserstein geodesic between bounded densities of compact support provides
a solution of the (compressible) pressureless Euler system 
that is a weak limit of (incompressible) Euler sprays.
Each such Wasserstein geodesic is also the unique minimizer of 
a relaxed least-action principle for a two-fluid mixture theory
corresponding to incompressible fluid mixed with vacuum.
\end{abstract}

\keywords{optimal transport, Riemannian metric, fluid mixtures, water wave equations} 
\subjclass[2010]{35Q35, 65D18, 35J96, 58E10, 53C22}

\maketitle



\section{Introduction}\label{s:intro}

\subsection{Overview}
%
The geometric interpretation of solutions of the Euler equations 
of incompressible inviscid fluid flow as 
geodesic paths in the group of volume-preserving diffeomorphisms
was famously pioneered by V.~I.~Arnold~\cite{Arnold66}.
If we consider an Eulerian description for an incompressible body of 
constant-density fluid moving freely in space,
such geodesic paths correspond to critical paths for the action
\begin{equation}\label{e:act1}
{\mathcal A}=\int_0^1\!\!\int_{\R^d}\, \rho|v|^2\,dx\,dt\,, \qquad
\end{equation}
where $\rho=(\rho_t)_{t\in[0,1]}$ is a path of 
characteristic-function densities
transported by a velocity field $v\in L^2(\rho\,dx\,dt)$ according
to the continuity equation
\begin{equation}
\partial_t\rho+\nabla\cdot(\rho v)=0\,. 
\label{e:inf1a}
\end{equation}
Such characteristic-function densities $\rho_t$ 
represent a fluid having \emph{shape} $\Omega_t$ at time $t$:
\begin{equation}\label{e:inf1b}
\rho_t = \one_{\Omega_t} \,, \quad t\in[0,1].
\end{equation}
Naturally, the velocity field must be divergence free 
in the interior of the fluid domain $\Omega_t$, satisfying $\nabla\cdot v=0$ there.
Equation~\eqref{e:inf1a} holds in the sense of distributions in $\R^d\times[0,1]$,
interpreting $\rho v$ as $0$ wherever $\rho=0$.

In this Eulerian framework, it is natural to study the action 
in \eqref{e:act1} subject to given endpoint conditions of the form
\begin{equation}
\rho_0=\one_{\Omega_0}\,,\quad \rho_1=\one_{\Omega_1}\,.
\label{e:inf1a2} 
\end{equation}
These endpoint conditions differ from Arnold-style conditions 
that fix the flow-induced volume-preserving diffeomorphism
between $\Omega_0$ and $\Omega_1$, 
and correspond instead to fixing only the \textit{image} of this diffeomorphism.
Imposing endpoint conditions in an Eulerian transport framework 
as in \eqref{e:inf1a2} 
is exactly analogous to the fundamental study of Benamou and Brenier \cite{BenBre00} 
that relates the minimization of the action \eqref{e:act1} 
without incompressibility constraints
to Wasserstein (Monge-Kantorovich) distance with quadratic cost.  

As we show in section~\ref{s:geo} below, it turns out that
the geodesic equations that result are precisely the Euler equations for  
\emph{potential flow} of an incompressible, inviscid
fluid occupying domain $\Omega_t$, with 
\emph{zero pressure and zero surface tension} 
on the free boundary $\D\Omega_t$.
In short, the geodesic equations are classic water wave equations with zero gravity
and surface tension. The initial-value problem for these equations has recently
been studied in detail---the works \cite{Lindblad,CoutShko2007,CoutShko2010}
extend the breakthrough works of Wu \cite{Wu97,Wu99} 
to deal with nonzero vorticity and zero gravity, and
establish short-time existence and uniqueness for sufficiently smooth initial data
in certain bounded domains.

The problem of \textit{minimizing} the action in \eqref{e:act1} subject to the constraints above turns out to be ill-posed if the dimension $d>1$, as we will show in this paper.  
By this we mean that action-minimizing paths that satisfy all the constraints \eqref{e:inf1a}, \eqref{e:inf1b} and \eqref{e:inf1a2} do not exist in general,
even locally.  Nevertheless, the infimum of the action defines a distance
between equal-volume sets which we may call \textit{shape distance}, 
determined by 
\begin{equation}\label{d:ds}
d_s(\Omega_0,\Omega_1)^2 = \inf \calA \,,
\end{equation}
where the infimum is taken subject to the constraints \eqref{e:inf1a}, \eqref{e:inf1b}, \eqref{e:inf1a2} above.
By the well-known result of Benamou and Brenier~\cite{BenBre00}, 
it is clear that
\begin{equation}\label{d:ge1}
d_s(\Omega_0,\Omega_1) \ge d_W(\one_{\Omega_0},\one_{\Omega_1}),
\end{equation}
where $d_W(\one_{\Omega_0},\one_{\Omega_1})$
 denotes the usual Wasserstein distance 
(Monge-Kantorovich distance with quadratic cost)  
between the measures with densities $\one_{\Omega_0}$ and $\one_{\Omega_1}$.
This is so because the result of \cite{BenBre00} characterizes the squared
Wasserstein distance $d_W(\one_{\Omega_0},\one_{\Omega_1})^2$ 
as the infimum in \eqref{d:ds} subject to the same 
transport and endpoint constraints as in \eqref{e:inf1a} and \eqref{e:inf1a2}, 
but \emph{without} the constraint \eqref{e:inf1b} 
that makes $\rho$  a characteristic function.

Our objective in this paper is to develop several results that precisely relate 
the infimum in \eqref{d:ds} and corresponding geodesics 
(critical paths for action) on the one hand,
to Wasserstein distance and corresponding length-minimizing 
Wasserstein geodesics---also known as displacement interpolants---on the other hand. 
Wasserstein geodesic paths typically do not have characteristic-function
densities, and thus do not correspond to geodesics for the shape distance $d_s$. 
A common theme in our results is the observation that the least-action
problem in \eqref{d:ds} is subject to 
an instability associated with \textit{microdroplet} formation.

The idea that Arnold's least action principle for incompressible flows
may suffer analytically from instability or non-attainment
appears to have led Brenier and others starting in the late 1980s
to investigate various forms of relaxed least-action problems for incompresible flows  
\cite{Brenier89,Brenier92,Shnirelman94,Brenier97,Brenier99,Brenier2008,AmbrosioFigalli09,LopesNP}.
Such relaxed problems involve generalizing the notion of flows of diffeomorphisms 
to formulate a framework in which existence of minimizers can be proved,
using convex analysis or variational methods.
Our microdroplet constructions also provide a precise connection between
Wasserstein geodesic paths (which correspond to pressureless, compressible fluid flows)
and relaxed least-action problems for flows of incompressible-fluid--vacuum mixtures.

\subsection{Main results}  
Broadly speaking, our aim is to investigate the geometry of the space of shapes
(corresponding to characteristic-function densities), focusing on the geodesics
for shape distance and the corresponding distance induced by \eqref{d:ds}.
Studies of this type have been carried out by many other authors, as will be discussed
in subsection~\ref{ss:shapespace}.  

One issue about which we have little to say is that of geodesic completeness,
in the sense this term is used in differential geometry.
Here this concept corresponds to global existence in time 
for weak solutions of the free-boundary Euler equations.  
But in addition to other well-known difficulties for Euler equations,
in the present situation there arise further thorny problems, 
such as collisions of fluid droplets, for example. 

\textit{Geodesics between shapes.}
Our principal results instead address the question of determining 
which targets and sources are connected by geodesics for shape distance, 
and how these relate to the infimization in \eqref{d:ds}.
The general question of determining all exact connecting critical paths
is an interesting one that seems difficult to answer.
In regard to a related question in a space of smooth enough volume-preserving diffeomorphisms of a fixed manifold, 
Ebin and Marsden in \cite[15.2(vii)]{EbinMarsden} established a covering
theorem showing that the geodesic flow starting from the identity diffeomorphism 
covers a full neighborhood.  
By contrast, what our first result will show is that for an arbitrary 
bounded open source domain $\Omega_0$,
targets for shape-distance geodesics are globally dense in the `manifold'
of bounded open sets of the same volume. 
The idea is to construct geodesics comprised of tiny disjoint droplets 
(which we call \emph{Euler sprays}) that approximately reach an 
arbitrarily specified $\Omega_1$ as closely as desired
in terms of an optimal-transport distance.

Below, it is convenient to denote the distance between two bounded measurable sets
$\Omega_0$, $\Omega_1$ that is induced by Wasserstein distance 
by the overloaded notation 
\begin{equation}
 d_W(\Omega_0,\Omega_1) = d_W(\one_{\Omega_0},\one_{\Omega_1}),
\end{equation}
and similarly with $L^p$-Wasserstein distance $d_p$ for any value of $p\in[1,\infty]$. 
\begin{theorem}\label{th2}
Let $\Omega_0$, $\Omega_1$ be any pair of bounded open sets in $\R^d$ with equal volume.
Then for any $\eps>0$, there is an Euler spray which transports 
the source  $\Omega_0$ (up to a null set) to a target $\Omega_1^\eps$
satisfying $d_\infty(\Omega_1,\Omega_1^\eps)<\eps$.
The action ${\mathcal A}^\eps$ of the spray satisfies
\[
d_s(\Omega_0,\Omega_1^\eps)^2
\le
{\mathcal A}^\eps \le d_W(\Omega_0,{\Omega_1})^2+\eps\,.
\]
\end{theorem}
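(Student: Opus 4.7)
The plan is to build the Euler spray as a countable family of small translating balls whose centers trace straight-line segments prescribed by a discretization of the Brenier optimal transport plan. A uniformly translating ball is the simplest case of an ellipsoidal geodesic: the velocity field is constant in space, hence divergence-free and curl-free, so the Euler equations hold with pressure identically zero, and the zero-pressure free-boundary condition is trivially satisfied.

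The argument begins with Brenier's theorem, which supplies the unique optimal transport map $T=\nabla\phi$ with $\phi$ convex, pushing $\one_{\Omega_0}$ forward to $\one_{\Omega_1}$ and realizing $\int_{\Omega_0}|T(x)-x|^2\,dx=d_W(\Omega_0,\Omega_1)^2$. The structural property I would exploit is monotonicity $(T(x)-T(x'))\cdot(x-x')\ge 0$, from which a short computation shows that for every $t\in[0,1]$ the displacement interpolant $T_t(x)=(1-t)x+tT(x)$ is injective; distinct Brenier trajectories $t\mapsto T_t(x)$ and $t\mapsto T_t(x')$ never meet at a common time. After replacing $T$ by a Lipschitz modification on a compact subset of $\Omega_0$ of nearly full measure (via Lusin's theorem), this non-crossing becomes quantitative: any finite collection of centers $x_1,\dots,x_N$ yields trajectories pairwise separated by a uniform minimum $\delta_0>0$.

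To discretize, I would choose a Vitali covering of $\Omega_0$ (up to a null set) by countably many disjoint closed balls $B(x_i,r_i)$, with radii selected inductively so that, for each new index $i$, the moving ball $B(x_i+t(T(x_i)-x_i),r_i)$ is disjoint from all previously placed moving balls for all $t\in[0,1]$. This is possible because the finitely many earlier trajectories admit a positive separation margin controlling the admissible $r_i$. Each such translating ball is a spherical incompressible geodesic with velocity $v_i=T(x_i)-x_i$; the countable superposition is an Euler spray that carries $\bigcup_i B(x_i,r_i)=\Omega_0$ (up to a null set) to $\Omega_1^\eps:=\bigcup_i B(T(x_i),r_i)$.

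The action is the Riemann-type sum $\calA^\eps=\sum_i|B(x_i,r_i)|\,|T(x_i)-x_i|^2$, and by refining the Vitali cover and invoking continuity of $|T(x)-x|^2$ on the Lipschitz-truncated domain, this converges to $\int_{\Omega_0}|T(x)-x|^2\,dx=d_W(\Omega_0,\Omega_1)^2$ to within $\eps$. For $d_\infty(\Omega_1,\Omega_1^\eps)<\eps$, note $|\Omega_1^\eps|=|\Omega_0|=|\Omega_1|$; transporting each ball $B(T(x_i),r_i)$ onto the disjoint Brenier image $T(B(x_i,r_i))\subset\Omega_1$ realizes a coupling of $\one_{\Omega_1^\eps}$ to $\one_{\Omega_1}$ whose essential supremum of transport distance is $O(\max_i r_i)$, so shrinking the radii suffices. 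The main obstacle, and the delicate combinatorial step, is the inductive choice of radii: they must be small enough both to prevent collisions \emph{and} to make the Riemann sum accurate, while the balls must collectively cover $\Omega_0$ up to a null set. A hierarchical Vitali procedure, in which each new ball is chosen smaller than the pairwise separation margin dictated by the previously placed trajectories, is what reconciles these competing demands.
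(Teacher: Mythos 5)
Your overall architecture (Vitali cover of the source by microdroplets, transport of each droplet along a simple Euler geodesic tracking the Brenier map locally, action as a Riemann sum for $d_W^2$) matches the paper's, but the choice of rigidly \emph{translating balls} as the droplets creates a gap that cannot be repaired by shrinking radii. The obstruction is at $t=1$: if $B(x_i,r_i)$ and $B(x_j,r_j)$ are nearly tangent in the source --- and they must be, since the cover exhausts $\Omega_0$ up to a null set --- then $|x_i-x_j|\approx r_i+r_j$, while $|T(x_i)-T(x_j)|\approx|DT(x_i)(x_i-x_j)|$ can be as small as $\ulam\,(r_i+r_j)$, where $\ulam<1$ is the least eigenvalue of $\Hess\psi$. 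Whenever $T$ is not locally a rigid motion, the target balls $B(T(x_i),r_i)$ and $B(T(x_j),r_j)$ then overlap, so the superposition is not disjoint and $|\Omega_1^\eps|<|\Omega_0|$. Your inductive safeguard does not help: once $B(x_1,r_1)$ is placed, every center in an annulus of thickness comparable to $(1-\ulam)r_1/\ulam$ around it admits \emph{no} ball of any positive radius whose translate at $t=1$ clears $B(T(x_1),r_1)$, so the Vitali procedure cannot cover $\Omega_0$ up to measure zero. Note that your monotonicity bound $|T_t(x)-T_t(x')|\ge(1-t)|x-x'|$ degenerates exactly at $t=1$, which is where the collision occurs.

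This is precisely why the paper's droplets are not translating balls but \emph{ellipsoidal} Euler droplets: on each $B_i$ the Brenier map is replaced by its affine linearization $x\mapsto T(x_i)+DT(x_i)(x-x_i)$, which carries $B_i$ to an equal-volume ellipsoid squeezed exactly where $T$ compresses, so that neighboring images fit together; the nesting property of Proposition~\ref{p:nest} then places a genuine Euler geodesic inside each ellipsoidal Wasserstein droplet, and a further homothetic dilation of the target centers by $1+\eps$ absorbs the second-order Taylor error and yields the quantitative separation of Lemma~\ref{lem:tildeT}. The extra action incurred by the internal straining flow (absent for pure translation) is controlled by Lemma~\ref{l:Eactionest} together with the choice of radii in \eqref{est:epsor}. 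Your Riemann-sum and $d_\infty$ estimates would be fine if disjointness held, but without the ellipsoidal deformation the construction does not produce an Euler spray.
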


The precise definition of an Euler spray and the proof of this result 
will be provided in section~\ref{s:spray}.  
A particular, simple geodesic for shape distance will play a special role in our analysis.
Namely, we observe in Proposition~\ref{p:Edrop} that a path $t\mapsto\Omega_t$ of {ellipsoids} determines a 
critical path for the action~\eqref{e:act1} constrained
by \eqref{e:inf1a}--\eqref{e:inf1a2}
if and only if the $d$-dimensional vector $a(t)=(a_1(t),\ldots,a_d(t))$,
formed by the principal axis lengths, follows a geodesic curve on 
the hyperboloid-like surface in $\R^d$ determined by the constraint
that corresponds to constant volume,
\begin{equation}
a_1a_2\cdots a_d = \te{const}.
\end{equation}

 \begin{figure}[ht]  
     \centering
 \subfigure[Source disk $\Omega_0$ decomposed into microdroplets $B_i$ at $t=0$.]
 {  \raisebox{2.6mm}{\includegraphics[width=40mm]{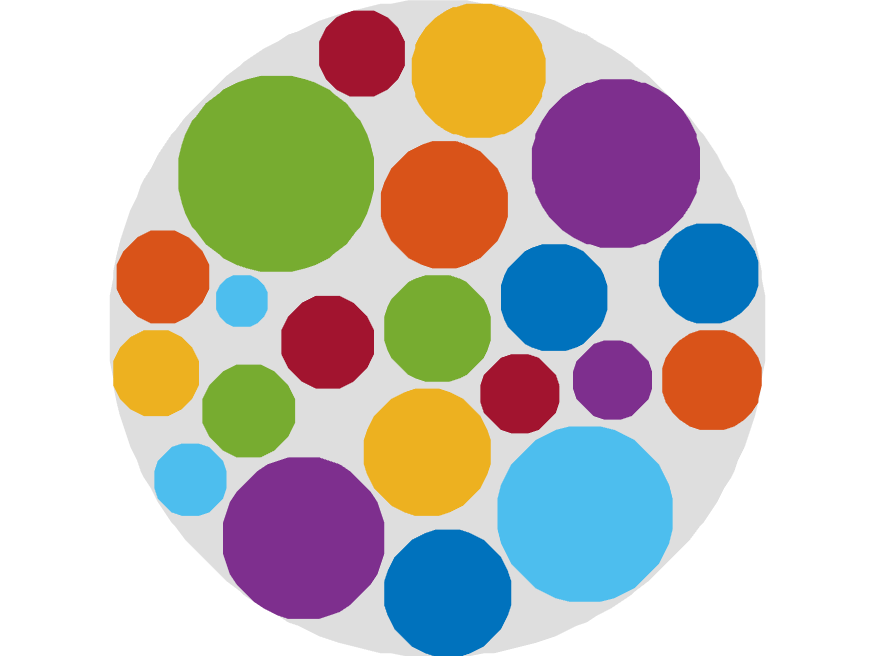} }} \hspace*{1pt} \nolinebreak
 \subfigure[Displacement interpolants at path midpoint $t=\frac12$.]
 {\raisebox{1.0mm}{\includegraphics[width=4.5cm]{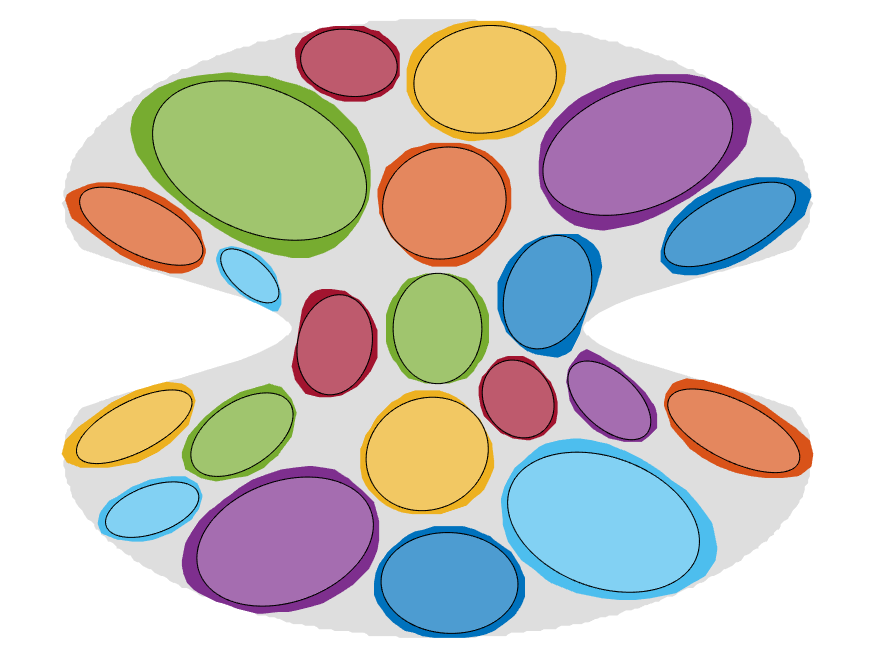}}\hspace*{8pt}}  \hspace*{4pt}\nolinebreak
 \subfigure[Expanded target $(1+\eps)T(\Omega_0)$ at $t=1$, indicating 
expanded microdroplet images $(1+\eps)T(B_i)$ (dark) and ellipsoidal 
approximation of $T(B_i)$ (light). $\eps=0.25$.]
 {\includegraphics[ width=4.8cm]{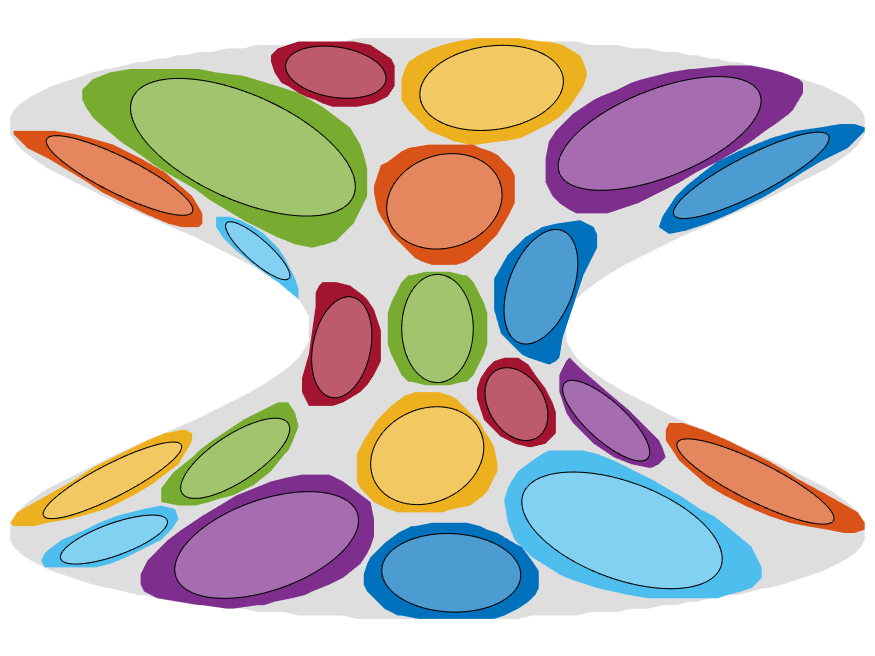}}
\caption{Illustration of Wasserstein geodesic flow from $\Omega_0$ to $\Omega_1=T(\Omega_0)$, where $T$ is the Brenier map.
Source $\Omega_0$ is decomposed into countably many small balls, few shown. 
Matching shades indicate corresponding droplets transported by displacement interpolation.
Euler spray droplets are nested inside Wasserstein ellipsoids and remain disjoint.
}
\label{fig:vitali}
 \end{figure}

To prove Theorem~\ref{th2},
we decompose the source domain $\Omega_0$, up to a set of measure zero,
as a countable union of tiny disjoint open balls using a Vitali covering lemma. 
These `microdroplets' are transported by ellipsoidal geodesics 
that approximate a local linearization of the Wasserstein geodesic 
(displacement interpolant)
which produces
straight-line transport of points from the source $\Omega_0$ to the target
$\Omega_1$. 
Crucially, the droplets remain disjoint {(essentially due to the 
convexity of the density along the straight Wasserstein transport paths).}
The total action or cost along the resulting path of
`spray' densities is then shown to be close to that attained by the
Wasserstein geodesic.

The ideas behind the construction of the Euler sprays are illustrated 
in Figure~\ref{fig:vitali}. The shaded background in panel (c) indicates
the target $\Omega_1=T(\Omega_0)$, expanded by a factor $(1+\eps)$,
where $T\colon\Omega_0\to\Omega_1$ 
is a computed approximation to the Brenier (optimal transport) map.
The expanded images $(1+\eps)T(B_i)$ of balls $B_i$ in the source
are shown in dark shades, and (nested inside) ellipsoidal approximations 
to $T(B_i)$ in corresponding light shades.
We show that along Wasserstein geodesics (displacement interpolants),
nested images remain nested, and that the ellipsoidal Euler geodesics (not shown)
remain nested inside the Wasserstein-transported ellipses indicated in light shades.

The result of Theorem~\ref{th2}  directly implies that a natural relaxation
of the shape distance $d_s$---the lower semicontinuous envelope 
with respect to Wasserstein distance---agrees with the induced Wasserstein distance $d_W$.
(See~\cite[section 1.7.2]{Braides} regarding the general notion of relaxation 
of variational problems.) 
In fact, by a rather straightforward completion
argument we can identify the shape distance in \eqref{d:ds} as follows.

\begin{theorem}\label{th1}
For every pair of  bounded measurable sets in $\R^d$ of equal volume,
\begin{equation*}
d_s(\Omega_0,\Omega_1) = d_W({\Omega_0},{\Omega_1}).
\end{equation*}
\end{theorem}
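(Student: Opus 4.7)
The plan is as follows. Since $d_s \ge d_W$ is already noted in \eqref{d:ge1}, the task reduces to establishing $d_s \le d_W$. Write $D = d_W(\Omega_0,\Omega_1)$.

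\emph{Step 1 (bounded open case).} I would iterate Theorem~\ref{th2} and concatenate the resulting Euler sprays. Fix a rapidly decreasing sequence $\eps_k\downarrow 0$. Starting from $\Omega^{(0)} = \Omega_0$, inductively apply Theorem~\ref{th2} with source $\Omega^{(k-1)}$, target $\Omega_1$, and tolerance $\eps_k$, to obtain a bounded open set $\Omega^{(k)}$ with $d_\infty(\Omega^{(k)},\Omega_1) < \eps_k$ together with an Euler spray from $\Omega^{(k-1)}$ to $\Omega^{(k)}$ of action
\[
A_k \le d_W(\Omega^{(k-1)},\Omega_1)^2 + \eps_k .
\]
Using the elementary bound $d_W \le d_\infty\,|\Omega_0|^{1/2}$, this gives $A_1 \le D^2 + \eps_1$ and $A_k \le \eps_{k-1}^2|\Omega_0| + \eps_k$ for $k\ge 2$. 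I would then reparametrize the $k$-th spray on a subinterval $[t_{k-1},t_k]$ of length $\tau_k$, with $\sum_k \tau_k = 1$. The quadratic time-rescaling relation for the kinetic action produces total action $\sum_k A_k/\tau_k$, minimized by $\tau_k \propto \sqrt{A_k}$ to $\bigl(\sum_k \sqrt{A_k}\bigr)^2$. Taking $\eps_k = c\cdot 4^{-k}$ yields $\sqrt{A_1}\le D + O(c)$ and $\sum_{k\ge 2}\sqrt{A_k} = O(\sqrt c)$, so the total action is at most $D^2 + O(\sqrt c)$. Since $d_\infty(\Omega^{(k)},\Omega_1)\to 0$ forces $\one_{\Omega^{(k)}}\to\one_{\Omega_1}$ in $L^1$, the concatenated path extends at $t = 1$ with $\rho_1 = \one_{\Omega_1}$, so letting $c\to 0$ yields $d_s(\Omega_0,\Omega_1)\le D$.

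\emph{Step 2 (bounded measurable case).} For general bounded measurable $\Omega_0,\Omega_1$ of equal volume, approximate each by a bounded open set $U_i^\delta\supset\Omega_i$ (outer regularity) and excise a tiny open ball to restore the original volume; then $d_W(U_i^\delta,\Omega_i)\to 0$ as $\delta\to 0$. The triangle inequality for $d_s$ follows immediately from its variational definition via time-rescaled concatenation of admissible paths, so
\[
d_s(\Omega_0,\Omega_1) \le d_s(\Omega_0,U_0^\delta) + d_s(U_0^\delta,U_1^\delta) + d_s(U_1^\delta,\Omega_1).
\]
The middle term satisfies $d_s(U_0^\delta,U_1^\delta) \le d_W(U_0^\delta,U_1^\delta)\to D$ by Step 1. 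For the outer terms, the construction behind Theorem~\ref{th2} requires openness of the source only (for a Vitali decomposition into disjoint balls); the target enters only through a Brenier transport map, whose existence persists for any bounded-density target. Re-running the iteration of Step 1 with measurable target therefore delivers $d_s(U_i^\delta,\Omega_i)\to 0$.

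\emph{Anticipated main obstacle.} The most delicate check is admissibility of the countably-concatenated path at the accumulation time $t=1$: the continuity equation \eqref{e:inf1a} must persist distributionally and $v$ must lie in $L^2(\rho\,dx\,dt)$ across infinitely many gluings. The finite total action, together with the standard control of $d_W$-speed by the action (Benamou--Brenier), should yield equicontinuity in $d_W$ and closure of \eqref{e:inf1a} at $t=1$, but this requires careful verification. The secondary concern---that Theorem~\ref{th2} still applies when the target is merely measurable---amounts to inspecting its proof rather than modifying it substantively.
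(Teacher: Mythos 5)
Your Step 1 is sound and is essentially the paper's own mechanism: the paper proves the more general Theorem~\ref{t:complete} and obtains Theorem~\ref{th1} as a corollary, but the engine is identical — a chain of Euler sprays from Theorem~\ref{th2} whose actions decay geometrically, concatenated with time-compression factors $\tau_k$ so that the total action $\sum_k A_k/\tau_k$ stays within $O(\sqrt c)$ of $D^2$, with the endpoint attained as a weak-$\star$ limit (your optimal choice $\tau_k\propto\sqrt{A_k}$ versus the paper's $\tau_k=2^{-k}$ is immaterial). One small overstatement: $d_\infty(\Omega^{(k)},\Omega_1)\to0$ does not force $L^1$ convergence of the characteristic functions when $|\bar\Omega_1\setminus\Omega_1|>0$; but only weak-$\star$ convergence is needed to close the continuity equation and the endpoint condition at $t=1$, and that does follow.

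The genuine gap is in Step 2, in the claim that ``the construction behind Theorem~\ref{th2} requires openness of the source only'' and that one can re-run the iteration with a merely measurable target $\Omega_i$. The target enters the proof of Theorem~\ref{th2} not just through existence of the Brenier map but through its \emph{partial regularity}: the Figalli / Figalli--Kim / De~Philippis--Figalli theorem cited in section~2 produces the relatively closed null sets $\Sigma_0,\Sigma_1$ and the smooth diffeomorphism $T\colon\Omega_0\setminus\Sigma_0\to\Omega_1\setminus\Sigma_1$ only when \emph{both} $\Omega_0$ and $\Omega_1$ are open sets on which the densities are bounded away from zero and infinity. Without this, the quantities $\ulam_{B(x,r)}$, $\olam_{B(x,r)}$ and $\|D^3\psi\|_{B(x,r)}$ in \eqref{est:epsor}, the Vitali cover of the non-singular set, and the Taylor estimate \eqref{est:lot} are all unavailable, so the ellipsoidal transport spray cannot be built toward a non-open target. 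The repair is the one the paper actually uses: never transport onto $\one_{\Omega_i}$ directly. Instead choose open sets $W_k$ (e.g.\ by the grid construction of Lemma~\ref{l:approxrho}, or by your outer-regularity trick) with $d_W(\one_{W_k},\one_{\Omega_i})<\eta\,2^{-k}$, connect consecutive $W_k$'s by Euler sprays via Theorem~\ref{th2} (both endpoints open), and concatenate with compressed times; the total action is $O(\eta^2)$ and $\one_{\Omega_i}$ is reached only as the weak-$\star$ limit at the terminal time. With that substitution your outer terms $d_s(U_i^\delta,\Omega_i)$ do tend to $0$ and the triangle-inequality argument closes.
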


As is well known, Wasserstein distance between measures
of a given mass that are supported inside a fixed compact set
induces the topology of weak-$\star$ convergence.
In this topology, the closure of the set of such measures with 
characteristic-function densities 
is the set of measurable functions $\rho\colon\R^d\to[0,1]$ with compact support.
Theorem~\ref{th1} above is a corollary of the following more general result 
that indicates how Euler-spray geodesic paths approximately connect 
arbitrary endpoints in this set.

\begin{theorem}\label{t:complete} 
Let $\rho_0$, $\rho_1\colon\R^d\to[0,1]$ be measurable functions of compact support
that satisfy 
\[
\int_{\R^d}\rho_0 = \int_{\R^d}\rho_1\,.
\]  
Then
\begin{itemize}
\item[(a)] For any $\eps>0$ there are open sets $\Omega_0$,
$\Omega_1$ which satisfy 
\[
d_\infty(\rho_0,\one_{\Omega_0}) + d_\infty(\rho_1,\one_{\Omega_1})<\eps,
\]
and are connected by an Euler spray
whose total action $\calA^\eps$ satisfies
\[
\calA^\eps \le d_W(\rho_0,\rho_1)^2 + \eps.
\]
\item[(b)] For any $\eps>0$ there is a 
path $\rho^\eps=(\rho_t^\eps)_{t\in(0,1)}$ on $(0,1)$ consisting of
a countable concatenation of Euler sprays, such that 
\[
\rho_t^\eps \wkto \rho_0 \quad\mbox{as $t\to0^+$}\,,\qquad
\rho_t^\eps \wkto \rho_1 \quad\mbox{as $t\to1^-$}\,,
\]
and the total action $\calA^\eps$ of the path satisfies
\[
\calA^\eps   
=  \int_0^1\int_{\R^d} \rho_t^\eps |v^\eps|^2\,dx\,dt 
\le d_W(\rho_0,\rho_1)^2+ \eps.
\]
\end{itemize}
\end{theorem}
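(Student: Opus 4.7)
The plan is to deduce both parts from Theorem~\ref{th2} together with the following approximation lemma, which I would prove first: for any measurable $\rho\colon\R^d\to[0,1]$ of compact support and any $\eta>0$, there exists a bounded open set $\Omega$ with $|\Omega|=\int\rho$ and $d_\infty(\rho,\one_\Omega)\le\eta$. This follows by partitioning a bounded neighborhood of $\supp\rho$ into cubes $C_j$ of diameter less than $\eta$ and choosing, inside each $C_j$, a disjoint open subset $A_j\subset C_j$ with $|A_j|=\int_{C_j}\rho\le|C_j|$; the natural cube-by-cube transport has $L^\infty$-cost at most $\eta$.

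For part (a), apply the lemma to obtain bounded open sets $\tilde\Omega_0,\tilde\Omega_1$ of common volume $M:=\int\rho_0$ with $d_\infty(\rho_i,\one_{\tilde\Omega_i})\le\eta$, and then invoke Theorem~\ref{th2} with tolerance $\tilde\eps$ to produce an Euler spray from $\tilde\Omega_0$ to a set $\Omega_1$ satisfying $d_\infty(\one_{\tilde\Omega_1},\one_{\Omega_1})<\tilde\eps$ with action at most $d_W(\tilde\Omega_0,\tilde\Omega_1)^2+\tilde\eps$. Setting $\Omega_0:=\tilde\Omega_0$, the endpoint bound follows from the triangle inequality for $d_\infty$. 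For the action bound, use the elementary estimate $d_W(\mu,\nu)\le\sqrt{M}\,d_\infty(\mu,\nu)$ for measures of common mass $M$ (integrate an $L^\infty$ transport against $\mu$) and the triangle inequality for $d_W$ to conclude $d_W(\tilde\Omega_0,\tilde\Omega_1)^2\le\bigl(d_W(\rho_0,\rho_1)+2\sqrt{M}\eta\bigr)^2$; choosing $\eta,\tilde\eps$ small yields the claim.

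For part (b), the idea is to place a near-optimal Euler spray on a central interval $[a,1-a]$ (with $a>0$ small) and countably many smaller sprays on dyadic intervals accumulating at $t=0$ and $t=1$, each refining the $d_\infty$-approximation to $\rho_0$ or $\rho_1$. Fix a fast-decaying sequence $\eta_k\downarrow0$ and obtain open $\Omega_i^{(k)}$ of volume $M$ with $d_\infty(\rho_i,\one_{\Omega_i^{(k)}})\le\eta_k$. On $[a,1-a]$ place the spray from Theorem~\ref{th2} applied to $\Omega_0^{(0)},\Omega_1^{(0)}$, time-rescaled (which multiplies the action by $1/(1-2a)$), starting exactly at $\Omega_0^{(0)}$ and ending at some $W_0$ close to $\Omega_1^{(0)}$. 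On the left tail decompose $(0,a)=\bigcup_{k\ge0}J_k$ with $J_k=[t_{k+1},t_k]$ and $|J_k|=a\cdot 2^{-k-1}$, and set $V_0=\Omega_0^{(0)}$; at step $k$ apply Theorem~\ref{th2} with source $V_k$ and target $\Omega_0^{(k+1)}$, then time-rescale the resulting spray onto $J_k$ with reversed orientation, so that the exact endpoint $V_k$ appears at the inner time $t_k$ (matching what was built before) and the approximate endpoint $V_{k+1}$ with $d_\infty(V_{k+1},\Omega_0^{(k+1)})<\mathrm{tol}_{k+1}$ appears at the outer time $t_{k+1}$. Treat the right tail symmetrically starting from $W_0$.

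Action and convergence bookkeeping: the action on $J_k$ is at most $[d_W(V_k,\Omega_0^{(k+1)})^2+\mathrm{tol}_{k+1}]/|J_k|\le[M(\eta_k+\mathrm{tol}_k+\eta_{k+1})^2+\mathrm{tol}_{k+1}]/|J_k|$, and choosing $\eta_k,\mathrm{tol}_k=O(2^{-2k}\delta)$ makes the sum over $k$ bounded by $O(\delta)$, so the total tail action is $\le\eps/3$ for small $\delta$; the central spray contributes at most $(d_W(\rho_0,\rho_1)^2+\eps/3)/(1-2a)\le d_W(\rho_0,\rho_1)^2+\eps/2$ for $a$ small. Weak-$\star$ convergence $\rho_t^\eps\wkto\rho_0$ as $t\to0^+$ reduces to $d_W$-convergence, since within $J_k$ one has $d_W(\rho_t^\eps,V_k)\le\sqrt{A_k|J_k|}=O(\eta_k+\mathrm{tol}_k)\to0$, while $d_W(V_k,\rho_0)\le\sqrt{M}(\eta_k+\mathrm{tol}_k)\to0$. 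The hard part will be precisely this bookkeeping for the countable concatenation: Theorem~\ref{th2} only guarantees exact endpoint matching on one side, which is what forces the orientation-reversed time-rescaling above, and one must calibrate $\eta_k$, $\mathrm{tol}_k$, and $a$ simultaneously so that the geometric summability on the tails and the correction on the central interval both deliver the bound $d_W(\rho_0,\rho_1)^2+\eps$.
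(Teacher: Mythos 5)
Your proposal is correct and follows essentially the same route as the paper: the same grid-based approximation lemma (Lemma~\ref{l:approxrho}), the same application of Theorem~\ref{th2} plus triangle inequalities for part (a), and for part (b) the same countable concatenation of small-action Euler sprays on geometrically shrinking time intervals accumulating at $t=0$ and $t=1$, with orientation reversal on one tail to reconcile the exact-source/approximate-target asymmetry of Theorem~\ref{th2}. The calibration of tolerances and interval lengths you describe matches the paper's compression-by-$\tau_k=2^{-k}$ bookkeeping.
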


The results of Theorems~\ref{th2} and \ref{t:complete} concern geodesics for shape distance
that  only approximately connect  arbitrary sources $\Omega_0$ and targets $\Omega_1$.
A uniqueness property of Wasserstein geodesics
allows us to establish the following sharp criterion for existence and non-existence
of \textit{length-minimizing} shape geodesics that exactly connect source to target. 
 
\begin{theorem}\label{th4} 
Let $\Omega_0$, $\Omega_1$ be bounded open
sets in $\R^d$ with equal volume, 
and let $\rho=(\rho_t)_{t\in[0,1]}$ be the density along the Wasserstein geodesic path 
that connects $\one_{\Omega_0}$ and $\one_{\Omega_1}$.
Then the infimum for shape distance in \eqref{d:ds} is achieved
by some path satisfying the constraints \eqref{e:inf1a},\eqref{e:inf1b},\eqref{e:inf1a2}
if and only if $\rho$ is a characteristic function.
\end{theorem}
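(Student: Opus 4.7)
The statement is a biconditional; I would prove each direction separately, with the forward implication essentially immediate and the converse resting on uniqueness of Wasserstein geodesics.

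For the forward implication, I would assume the density $\rho_t$ along the Wasserstein geodesic between $\one_{\Omega_0}$ and $\one_{\Omega_1}$ is itself a characteristic function for each $t$, say $\rho_t = \one_{\tilde\Omega_t}$. Pairing this with the Benamou-Brenier velocity $v_t$---the Eulerian velocity of the displacement interpolation $x_t = (1-t)x + t T(x)$, where $T$ is the Brenier map from $\one_{\Omega_0}$ to $\one_{\Omega_1}$---produces an admissible path for the variational problem defining $d_s$: it satisfies the continuity equation~\eqref{e:inf1a}, the shape-density constraint~\eqref{e:inf1b}, and the endpoint conditions~\eqref{e:inf1a2}. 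Its action equals $d_W(\Omega_0,\Omega_1)^2$, which by Theorem~\ref{th1} coincides with $d_s(\Omega_0,\Omega_1)^2$, so the infimum in~\eqref{d:ds} is attained.

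For the converse, suppose the infimum is attained by some admissible path $(\tilde\rho,\tilde v)$ with $\tilde\rho_t = \one_{\tilde\Omega_t}$. Its action equals $d_s^2 = d_W^2$ by Theorem~\ref{th1}, and since imposing the shape-density constraint~\eqref{e:inf1b} only narrows the class of competitors relative to the Benamou-Brenier problem, $(\tilde\rho,\tilde v)$ is automatically a Benamou-Brenier minimizer between the endpoint measures $\one_{\Omega_0}$ and $\one_{\Omega_1}$. I would then invoke uniqueness of the Wasserstein geodesic: since $\one_{\Omega_0}$ is absolutely continuous with respect to Lebesgue measure, Brenier's theorem provides a unique optimal transport map $T = \nabla\phi$, and the displacement interpolant $\rho_t = ((1-t)\,\id + t T)_\# \one_{\Omega_0}$ is the unique minimizing curve of measures for the Benamou-Brenier action. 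Consequently $\tilde\rho_t = \rho_t$ for every $t$, so $\rho_t = \one_{\tilde\Omega_t}$ is a characteristic function.

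The only nontrivial ingredient, and hence the main obstacle, is justifying uniqueness of the Wasserstein geodesic as a curve of measures. This combines uniqueness of the Brenier map (guaranteed by absolute continuity of the source) with the identification of every minimizer of the Benamou-Brenier action with a displacement interpolant; both facts are standard in the optimal transport literature and I would cite them rather than reprove them. A small sanity check is that no separate argument is needed to realize the reverse inequality $d_s \le d_W$ from Theorem~\ref{th1} by an admissible path, since the converse direction here is precisely the characterization of when such realization occurs.
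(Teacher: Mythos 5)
Your proposal is correct and follows essentially the same route as the paper: the forward direction is the observation that a characteristic-function Wasserstein geodesic is itself an admissible shape path achieving $d_W^2=d_s^2$ (via Theorem~\ref{th1}), and the converse identifies any action minimizer for $d_s$ as a Benamou--Brenier minimizer and then invokes uniqueness of the Wasserstein geodesic. The paper phrases the converse slightly differently---it first reparametrizes the minimizer to constant speed and then cites uniqueness of constant-speed geodesics---but this is the same argument in substance.
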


For dimension $d=1$ the Wasserstein density is always a characteristic function.
For dimension $d>1$ however, this property of being a characteristic function,
{together with convexity of the density along transport lines,}
requires that the Wasserstein geodesic is given piecewise by rigid translation.
See Corollary~\ref{cor:rigid} and Remark~\ref{r:rho3} in {subsection \ref{ss:rigid}.} 

\textit{Limits of Euler sprays.}
For the Euler sprays constructed in the proof of Theorem~\ref{th2},
the fluid domains $\Omega_t$ do not typically have smooth boundary, 
due to the presence of cluster points of the countable set of microdroplets.
The geodesic equations that they satisfy, then, are not quite 
classical free-boundary water-wave equations. 
Rather, our Euler sprays 
provide a family of weak solutions $(\rho^\eps,v^\eps,p^\eps)$
to the following system of Euler equations:
\begin{align}
&\D_t\rho + \nabla\cdot(\rho v) = 0, 
\label{e:euler1} \\
&\D_t(\rho v) + \nabla\cdot(\rho v\otimes v) + \nabla p = 0,
\label{e:euler2}
\end{align}
with the ``incompressibility'' constraint that $\rho^\eps$ is a shape density, meaning
it is a characteristic function as in \eqref{e:inf1b}.
Both of these equations hold in the sense of distributions on $\R^d\times[0,1]$,
which means the following:
For any smooth test functions 
$q\in C_c^\infty(\R^d\times[0,1],\R)$ and 
$\tilde v\in C_c^\infty(\R^d\times[0,1],\R^d)$,
\begin{align}
\int_0^1\int_{\R^d}\rho(\D_t q + v\cdot \nabla q) \,dx\,dt &= 
\left. \int_{\R^d} \rho q\,dx \right|^{t=1}_{t=0} \,,
\label{e:eulerwk1}
\\
\int_0^1\int_{\R^d} \rho v\cdot (\D_t \tilde v +  v\cdot\nabla \tilde v)+p\nabla \cdot \tilde v \,dx\,dt &= 
\left. \int_{\R^d} \rho v\cdot \tilde v\,dx \right|^{t=1}_{t=0} \,.
\label{e:eulerwk2}
\end{align}

Now, limits as $\eps\to0$ of these Euler-spray geodesics can be considered.
By dealing with a sequence of initial and final data 
$\rho_0^k=\one_{\Omega_0^k}$, $\rho_1^k=\one_{\Omega_1^k}$
that converge weak-$\star$, we find that it is possible to approximate
a general family of Wasserstein geodesic paths, in the following sense.

\begin{theorem}\label{th3}
Let $\rho_0,\rho_1\colon\R^d\to[0,1]$ be measurable functions of compact support 
that satisfy
\[
\int_{\R^d}\rho_0 = \int_{\R^d}\rho_1\,.
\]  
Let $(\rho,v)$ be the density and transport velocity determined by 
the unique Wasserstein geodesic 
that connects the measures with densities $\rho_0$ and $\rho_1$
as described in section~2.

Then there is a sequence of weak solutions $(\rho^k,v^k,p^k)$
to \eqref{e:eulerwk1}--\eqref{e:eulerwk2},
associated to Euler sprays as provided by Theorem~\ref{th2}, that 
converge to  $(\rho,v,0)$, and $(\rho,v)$ is a weak solution
of the pressureless Euler system 
\begin{align}
&\D_t\rho + \nabla\cdot(\rho v) = 0, 
\label{e:euler1p} \\
&\D_t(\rho v) + \nabla\cdot(\rho v\otimes v) = 0.
\label{e:euler2p}
\end{align}
The convergence holds in the the following sense: $p^k\to0$ uniformly,
and 
\begin{equation}\label{e:rvwk}
\rho^k\wksto \rho,\qquad
\rho^k v^k\wksto \rho v, \qquad
\rho^k v^k\otimes v^k \wksto \rho v\otimes v ,
\end{equation}
weak-$\star$ in $L^\infty$ on $\R^d\times[0,1]$.
\end{theorem}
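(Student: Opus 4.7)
The plan is to approximate $\rho_0,\rho_1$ weak-$\star$ by characteristic functions, use Theorem~\ref{th2} to produce connecting Euler sprays with action close to $d_W(\rho_0,\rho_1)^2$, and then identify the weak-$\star$ limit through the Benamou--Brenier characterization of the Wasserstein geodesic together with a defect-measure argument for the momentum flux.

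First I would weak-$\star$ approximate $\rho_0$ and $\rho_1$ by equal-volume characteristic functions $\one_{\Omega_j^k}$ of bounded open sets (standard microscopic-mixing on a fine cubic grid, arranged so that the Brenier maps between the approximants behave consistently with the limiting Brenier map). Theorem~\ref{th2} then furnishes Euler sprays $(\rho^k,v^k,p^k)$ connecting $\Omega_0^k$ to some $\Omega_{1,\eps_k}^k$ that is $\eps_k$-close to $\Omega_1^k$ in $d_\infty$, with total action
\[
\calA^k \;\le\; d_W(\Omega_0^k,\Omega_1^k)^2+\eps_k \;\longrightarrow\; d_W(\rho_0,\rho_1)^2.
\]
Each microdroplet carries an ellipsoidal potential flow; solving the pressure equation $-\Delta p=\sum_i(\dot a_i/a_i)^2$ on the moving ellipsoid with vanishing boundary data yields the explicit formula $p=\tfrac12\lambda\bigl(1-\sum_i(x_i-c_i)^2/a_i^2\bigr)$, where $\lambda$ is the Lagrange multiplier for the volume constraint on the ellipsoidal geodesic. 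A direct scaling estimate gives $\|p^k\|_\infty\lesssim\delta_k^2\to 0$, where $\delta_k$ is the maximum droplet diameter. Since all trajectories stay in a fixed compact set and the ellipsoidal velocities are controlled by the (bounded) Brenier displacement plus a bounded deformation rate, $\|v^k\|_{L^\infty}$ is uniformly bounded, and thus $\rho^k$, $\rho^k v^k$, and $\rho^k v^k\otimes v^k$ are uniformly bounded in $L^\infty(\R^d\times[0,1])$.

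Extract a subsequence with $\rho^k\wksto\bar\rho$, $\rho^k v^k\wksto m$, and $\rho^k v^k\otimes v^k\wksto M$. Passing to the limit in \eqref{e:eulerwk1}--\eqref{e:eulerwk2}, using $p^k\to 0$ uniformly, yields $\D_t\bar\rho+\nabla\cdot m=0$ and $\D_t m+\nabla\cdot M=0$ weakly, with endpoints $\bar\rho(0)=\rho_0$ and $\bar\rho(1)=\rho_1$. Setting $\bar v=m/\bar\rho$, weak-$\star$ lower semicontinuity of the Benamou--Brenier functional gives
\[
\int_0^1\!\!\int_{\R^d}\bar\rho|\bar v|^2\,dx\,dt \;\le\; \liminf_k\calA^k \;\le\; d_W(\rho_0,\rho_1)^2,
\]
so $(\bar\rho,\bar v)$ is an admissible transport attaining the Wasserstein infimum; by uniqueness of the Wasserstein geodesic between bounded compactly supported densities, $(\bar\rho,\bar v)=(\rho,v)$ and $\calA^k\to\int\rho|v|^2$. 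To pin down $M=\rho v\otimes v$, I would exploit that the matrix measure $\rho^k(v^k-v)\otimes(v^k-v)$ is symmetric positive semidefinite with trace $\rho^k|v^k|^2-2\rho^k v^k\cdot v+\rho^k|v|^2$ whose integral tends to $\int\rho|v|^2-2\int\rho|v|^2+\int\rho|v|^2=0$; nonnegativity of the trace then forces $\rho^k|v^k-v|^2\to 0$ in $L^1$, which in turn yields entrywise weak-$\star$ convergence $\rho^k(v^k-v)\otimes(v^k-v)\wksto 0$, and expanding the tensor product gives $M=\rho v\otimes v$. Hence the convergences \eqref{e:rvwk} hold and $(\rho,v,0)$ satisfies the pressureless Euler system \eqref{e:euler1p}--\eqref{e:euler2p}. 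The main obstacle is the cross-term identifications $\int\rho^k v^k\cdot v\to\int\rho|v|^2$ and $\int\rho^k|v|^2\to\int\rho|v|^2$ used to drive the trace to zero: because the Wasserstein velocity $v$ is only $\rho\,dx\,dt$-essentially bounded (not continuous), these require mollifying $v$ against smooth fields, using the uniform $L^\infty$ bounds on $\rho^k v^k$ and the action estimate to control the mollification error; the remaining ingredients---Benamou--Brenier lower semicontinuity, uniqueness of the Brenier map, and $L^\infty$ weak-$\star$ compactness---are standard.
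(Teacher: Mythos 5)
Your proposal is correct in outline but takes a genuinely different route from the paper. The paper argues in two quantitative steps: first, for characteristic-function endpoints, Lemma~\ref{lem:vlv} gives the uniform Lagrangian estimates $|X^\eps(z,t)-T_t(z)|+|\dot X^\eps(z,t)-\dot T_t(z)|\le\hat K\sqrt\eps$ comparing the spray flow to the Wasserstein flow, from which the weak-$\star$ limits follow by changing to Lagrangian variables (Proposition~\ref{p:wkconv}); second, it proves weak-$\star$ stability of Wasserstein geodesics under weak convergence of the endpoints via stability of optimal plans (Proposition~\ref{p:Wgeostab}); a diagonal argument combines the two. You instead extract weak-$\star$ limits $(\bar\rho,m,M)$ by soft compactness, identify $(\bar\rho,m)=(\rho,\rho v)$ by lower semicontinuity of the Benamou--Brenier functional plus uniqueness of the geodesic, and identify $M$ from convergence of the total action $\calA^k\to\int\rho|v|^2$ together with positivity of $\rho^k(v^k-v)\otimes(v^k-v)$ --- the classical ``weak convergence plus energy convergence implies strong convergence'' device. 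This works, and your worry about the cross terms is unfounded: no mollification is needed, since $L^\infty$ weak-$\star$ convergence is tested against $L^1$ functions and $v$ (extended by zero) is bounded with compact support, so $\int\rho^k v^k_j\,(v_i\varphi)\to\int\rho v_jv_i\varphi$ directly. What the paper's harder route buys is the quantitative Lagrangian control that is later upgraded to $\tlp$ convergence (Theorem~\ref{t:tlptime}); your route yields only the weak-$\star$ statements. One point to correct: the pressure bound is not $\|p^k\|_\infty\lesssim\delta_k^2$ with $\delta_k$ the maximal droplet diameter --- the Vitali radii need not shrink as $\eps_k\to0$, and the single-droplet bound \eqref{e:pbound} carries the aspect-ratio factor $\olam^4/\ulam^2$. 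The smallness instead comes from the constraint \eqref{est:epsor} built into the covering, which forces $(\olam^2/\ulam)^2r^2<\eps(\diam\Omega_1)^2$ and hence $0\le p^k\le K\eps_k$ uniformly.
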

This result shows that one can approximate a large family of solutions of
pressureless Euler equations, ones coming from Wasserstein geodesics having
bounded densities of compact support, by solutions of incompressible Euler
equations with vacuum. (For densities taking values in $[0,R]$ instead of $[0,1]$,
one can simply scale the densities coming from the Euler sprays,
by multiplying by $R$.)

The convergence in \eqref{e:rvwk} can be strengthened in terms
of the $\tlp$ topology that was introduced in \cite{GTS} to compare two functions 
that are absolutely continuous with respect to 
different probability measures---see subsection~\ref{r:TLp}.
The result of Theorem~\ref{t:tlptime} essentially shows that while oscillations 
exist in space and time for the densities $\rho^k$ and velocities $v^k$ in 
Theorem~\ref{th3}, there are no oscillations following the flow lines.
Our analysis of convergence in the $\tlp$ topology is based upon
an improved stability result regarding the stability of transport maps.
We describe and establish this stability result separately in an Appendix,
due to its potential for independent interest.

\textit{Relaxed least-action principles.}
Our next result establishes a precise connection between Wasserstein geodesics
and a relaxed least-action principle for incompressible flow of two-fluid mixtures.
In particular this relates to work of Brenier on relaxations of Arnold's least-action 
principle for incompressible flow~\cite{Brenier89,Brenier92,Brenier97,Brenier99,Brenier2008,Brenier2013}.
Our mixture model is a variant of Brenier's model for 
homogenized vortex sheets \cite{Brenier97}, and is related
to the variable-density model studied by Lopes et al.~\cite{LopesNP}.
These models involve Lagrangian rather than Eulerian endpoint conditions;
we make a comparison and prove an existence theorem for our model
in Appendix~\ref{a:LAP}.
Our model also  allows one fluid to have zero density, however,
corresponding to a fluid-vacuum mixture.
In this degenerate case,
we show that the Wasserstein geodesic provides the unique minimizer
of the relaxed least-action principle---see Theorem~\ref{t:W}.

An important point of contrast between our results and those of 
Brenier \cite{Brenier99} and Lopes et al.~\cite{LopesNP}
concerns the issue of consistency of the relaxed theory with classical solutions.
The results of \cite{Brenier99} and \cite{LopesNP} establish
that classical smooth solutions of the incompressible fluid equations
do provide action minimizers locally, for sufficiently short time.
However, the result of Theorem~\ref{th4} above shows that 
for any smooth free-boundary fluid motion (corresponding to a shape geodesic)
that is not given locally by rigid motion,
the solution \emph{never} achieves minimum action,
over any positive interval of time.

\textit{Shape distance without volume constraint.}
Our investigations in this paper were motivated in part by 
an expanded notion of shape distance that was introduced and 
examined by Schmitzer and Schn\"orr in \cite{SchSch2013}. 
These authors considered a shape distance determined 
by restricting the Wasserstein metric to smooth paths 
of `shape measures' consisting of uniform distributions on 
bounded open sets in $\R^2$ with connected smooth boundary.  
This allows one to naturally compare shapes of different volume.
In our present investigation, the only
 smoothness properties of shapes and paths that we require are
those intrinsically associated with Wasserstein distance.
Thus, we investigate the geometry of a `submanifold' of 
the Wasserstein space consisting of uniform distributions on 
shapes regarded as arbitrary bounded measurable sets in $\R^d$.
As we will see in Section~\ref{s:discuss} below,
geodesics for this extended shape distance correspond to a modified water-wave
system with spatially uniform compressibility and zero \textit{average} pressure.
In Theorem~\ref{t:distC} below we extend the result of Theorem~\ref{th1},
for volume-constrained paths of shapes, to deal with
paths of uniform measures connecting two arbitrary bounded measurable sets. 
We show that the extended shape distance 
again agrees with the Wasserstein distance between the endpoints.
The proof follows directly from the construction of concatenated Euler
sprays used to prove Theorem~\ref{t:complete}(b).
%
%
%

\subsection{Related work on geometry of image and shape spaces}\label{ss:shapespace}

The shape distance that we defined in \eqref{d:ds} is related to a large
body of work in imaging science and signal processing.

The general problem of finding good ways to compare two signals 
(such as time series,  images, or shapes)
is important in a number of application areas, 
including computer vision, machine learning, and computational anatomy.
The idea to use deformations as a means of comparing images goes back to
pioneering work of D'Arcy Thompson \cite{Thompson}.  

Distances derived from optimal transport theory 
(Monge-Kantorovich, Wasserstein, or earth-mover's distance)
have been found useful in analyzing images 
by a number of workers~\cite{GanMacshape, HZTA, RubTomGui, SchSch15, WOSLCR, LOT}.  
The transport distance with quadratic cost (Wasserstein distance)  is special 
as it provides a (formal) Riemannian structure  on spaces of measures 
with fixed total mass \cite{AGS, Otto01, Villani03}. 

Methods which endow the space of signals with the metric structure 
of a Riemannian manifold are of particular interest, 
as they facilitate a variety of image processing tasks. 
This geometric viewpoint, pioneered by 
Dupuis, Grenander \& Miller \cite{DuGrMi98, Grenander_Miller_CA}, 
Trouv\'e \cite{Trouv95}, Younes \cite{Youn98} and collaborators,
has motivated the study of a variety of metrics on spaces of images 
over a number of years---see \cite{ DuGrMi98, Holm2013,  Holm2009,  SchSch2013, 
Younes-book} for a small selection. 

The main thrust of these works is to
study Riemannian metrics and the resulting distances in the space of image
deformations (diffeomorphisms).
Connections with the Arnold viewpoint of fluid
mechanics were noted from the outset \cite{Youn98}, and   have been further
explored by Holm, Trouv\'e, Younes and others
\cite{Holm2013,Holm2009,Younes-book}.  This work has led to the 
{\em Euler-Poincar\'e theory of metamorphosis} \cite{Holm2009}, which sets up 
a formalism for analyzing  least-action principles based on Lie-group
symmetries generated by diffeomorphism groups.

A different way to consider shapes is to study them only via their
boundary, and consider Riemannian metrics defined in terms of normal
velocity of the boundary.  Such a point of view has been taken by Michor,
Mumford and collaborators \cite{BrMiMu14, MicMum05, MicMum06,
YMSM_metric_shape}. As they show in \cite{MicMum05}, a metric given by only the
$L^2$ norm of normal velocity does not lead to a viable geometry, as any
two states can be connected by an arbitrarily short curve.  On the other
hand it is shown in \cite{BrMiMu14} that if two or more derivatives of the
normal velocity are penalized, then the resulting metric on the shape space
is geodesically complete. 

In this context, we note that what our work shows is that if the metric is
determined by the $L^2$ norm
of the transport velocity in the bulk, then the global metric
distance is not zero, but that it is still degenerate in the sense that a
length-minimizing geodesic typically may not exist in the shape space.  
While our results do not directly involve smooth deformations of smooth shapes,
it is arguably interesting to consider shape spaces which 
permit `pixelated' approximations, and our results apply in that context.

We speculate that to create a shape distance that (even locally) admits
length-minimizing paths in the space of shapes, one needs to prevent the
creation a large perimeter at negligible cost.  This is somewhat analogous
to the motivation for the metrics on the space of curves considered by
Michor and Mumford \cite{MicMum05}.  Possibilities include introducing a
term in the metric which penalizes deforming the boundary, or a term which
enforces greater regularity for the vector fields considered. 

A number of existing works    
obtain regularity of geodesic paths and resulting diffeomorphisms
by considering Riemannian metrics given in terms of 
(second-order or higher) derivatives of velocities, 
as in the Large Deformation Diffeomorphic Metric Mapping (LDDMM) 
approach of \cite{BMTY05}, see \cite{BrVi17}. 
Metrics based on conservative transport which penalize only one derivative of the
velocity field are connected with viscous dissipation in fluids and have
been considered by Fuchs et al.~\cite{FuchsEtal}, 
Rumpf, Wirth and collaborators \cite{RuWi13, WBRS11}, as
well as by Brenier, Otto, and Seis \cite{BOS11}, who established a
connection to optimal transport.

\subsection{Plan}
The plan of this paper is as follows. In section~2 we collect some basic
facts and estimates that concern geodesics for Monge-Kantorovich/Wasserstein distance. 
In section~\ref{s:geo} we derive formally the geodesic equations for paths of shape
densities and describe the special class of ellipsoidal solutions. 
The construction of Euler sprays and the proof of Theorem~\ref{th2}
is carried out in section~\ref{s:spray}.  Theorems~\ref{th1} and \ref{t:complete} 
are proved in section~\ref{s:dsdw}. 
In section~\ref{s:weaklimits} we study weak convergence of Euler sprays 
and provide the proof of theorem~\ref{th3}.
The connection between Wasserstein geodesics and 
a relaxed least-action priniciple motivated by Brenier's work 
is developed in section~\ref{s:brenier}.
The main part of the paper concludes in section \ref{s:discuss} 
with a treatment of the extended notion of shape distance
related to that
examined by Schmitzer and Schn\"orr in \cite{SchSch2013}. 
Three appendices provide (a) proofs of a few basic facts about subgradients,
(b) a treatment of the $\tlp$ topology used in subsection~\ref{r:TLp},
and (c) an existence proof for the relaxed least-action model
for fluid mixtures.

%
%
\section{Preliminaries: Wasserstein geodesics between open shapes}\label{s:prelim}

In this section we recall some basic properties of the 
standard minimizing geodesic paths (displacement interpolants) for the
Wasserstein or Monge-Kantorovich distance 
between shape densities on open sets, and establish some basic estimates.  
Two properties that are key in the sequel are that the density $\rho$ is
(i) smooth on an open subset of full measure, and (ii) it is
\emph{convex} along the corresponding particle paths, see Lemma 2.1.


Let $\Omega_0$ and $\Omega_1$ be two bounded open sets 
in $\R^d$ 
with equal volume. Let $\mu_0$ and $\mu_1$ be measures with respective densities
\[
\rho_0=\one_{\Omega_0}\,,\qquad \rho_1=\one_{\Omega_1}.
\]
As is well known \cite{Bre91, Knosmi84}, there exists a convex
function $\psi$ such that the a.e.-defined map 
$T = \nabla \psi$ (called the \textit{Brenier map}
in~\cite{Villani03}) is the optimal transportation map between $\Omega_0$ and
$\Omega_1$, pushing $\mu_0$ forward to $\mu_1$,
corresponding to the quadratic cost. 
Moreover, this map is unique a.e.~in $\Omega_0$;
see \cite{Bre91} or \cite[Thm. 2.32]{Villani03}.

McCann \cite{McCann97} later introduced a natural curve $t\mapsto \mu_t$
that interpolates between $\mu_0$ and $\mu_1$, called the 
\textit{displacement interpolant}, which 
can be described as the push-forward of the measure $\mu_0$ 
by the interpolation map $T_t$ given by 
\begin{equation}\label{e:Tt}
T_t(z) = (1-t)z+t\nabla \psi(z), \qquad 0\le t\le 1.
\end{equation}
Because $\psi$ is convex, $\nabla\psi$ is monotone, satisfying
$\ip{\nabla\psi(z)-\nabla\psi(\hat z),z-\hat z}\ge0$ for all $z$, $\hat z$.
Hence the interpolating maps $T_t$ are injective for $t\in[0,1)$, satisfying 
\begin{equation}\label{e:Ttbound}
|T_t(z)-T_t(\hat z)|\ge (1-t)|z-\hat z| \,.
\end{equation}
Note that particle paths $z \mapsto T_t(z)$ follow straight lines with constant velocity:
\begin{equation}\label{e:v}
v(T_t(z),t) = \nabla\psi(z)-z.   
\end{equation}
Furthermore \cite{BenBre00}, $\mu_t$ has density $\rho_t$ that satisfies the continuity equation 
\begin{equation} \label{e:Wv}
\partial_t \rho + \dive(\rho v)  = 0,
\end{equation}
and in terms of these quantities, the Wasserstein distance satisfies 
\begin{equation}\label{d:dwdef}
d_W(\mu_0,\mu_1)^2 = 
\int_{\Omega_0} |\nabla\psi(z)-z|^2\,dz = \int_0^1\int_{\Omega_t}\rho |v|^2\,dx\,dt\ .
\end{equation}

The displacement interpolant has the property that
\begin{equation}\label{e:tsmu}
d_W(\mu_s,\mu_t) = (t-s) d_W(\mu_0,\mu_1), \quad 0\le s\le t\le 1.
\end{equation}
The property \eqref{e:tsmu} implies that the displacement interpolant
is a \textit{constant-speed geodesic} (length-minimizing path) 
with respect to Wasserstein distance.  The displacement interpolant 
$t\mapsto \mu_t$ is the \textit{unique} constant-speed geodesic
connecting $\mu_0$ and $\mu_1$,
due to the uniqueness of the Brenier map and Proposition 5.32 of
\cite{Santa} (or see \cite[Thm. 3.10]{AGuser}). 
For brevity the path $t\mapsto\mu_t$ is called the \textit{Wasserstein geodesic}
from $\mu_0$ to $\mu_1$. 

At this point it is convenient to mention that the result of Theorem~\ref{th4}, 
providing a sharp criterion for the existence of a minimizer for the shape distance in \eqref{d:ds},  will be derived by combining 
the uniqueness property of Wasserstein geodesics
with the result of Theorem~\ref{th1}---see the end of section~\ref{s:dsdw} below.

We note here that the $L^\infty$ transport distance may be defined 
as a minimum over maps $S$ that push forward the measure $\mu_0$ to $\mu_1$
\cite[Thm. 3.24]{Santa} and satisfies the estimate
\begin{equation}\label{d:dinfty}
    \begin{split}
        d_\infty(\mu_0,\mu_1) &=
    \min\{ \|S-{\id}\|_{L^\infty(\mu_0)} : S_\sharp\mu_0=\mu_1\} 
       \\ & \geq
|\Omega_0|^{-1/2} 
    \min\{ \|S-{\id}\|_{L^2(\mu_0)} : S_\sharp\mu_0=\mu_1\} 
        \\ & =
|\Omega_0|^{-1/2} 
        d_W({\Omega_0},{\Omega_1}).
    \end{split}
\end{equation}

\section{Geodesics and incompressible fluid flow}\label{s:geo}

\subsection{Incompressible Euler equations for smooth critical paths}\label{ss:geo}

In this subsection, for completeness we derive the Euler fluid equations 
that formally describe smooth geodesics (paths with stationary action) 
for the shape distance in \eqref{d:ds}. 
To cope with the problem of moving domains we work in a Lagrangian framework, 
computing variations with respect to flow maps that preserve density and 
the endpoint shapes $\Omega_0$ and $\Omega_1$.

Toward this end, suppose that 
\begin{equation}\label{d:Omega}
\Qdom = \bigcup_{t\in[0,1]} \Omega_t\times\{t\} \ \ \subset \R^d\times[0,1]
\end{equation}
is a space-time domain generated by smooth deformation of $\Omega_0$ due to a smooth velocity field $v\colon\bar\Qdom\to\R^d$.
That is, the $t$-cross section of $\Qdom$ is given by
\begin{equation}\label{e:Omegat}
\boxed{
\ \ \Omega_t=X(\Omega_0,t), \ %
}
\end{equation} where 
$X$ is the Lagrangian flow map associated to $v$, satisfying
\begin{equation}\label{e:Xt}
\dot X(z,t) = v(X(z,t),t),
\qquad
X(z,0)= z,
\end{equation}
for all $(z,t)\in \Omega_0\times[0,1]$.  

For any (smooth) extension of $v$
to $\R^d\times[0,1]$, the solution of the mass-transport equation in \eqref{e:inf1a} with given initial density $\rho_0$ supported in $\bar\Omega_0$
is
\[
\rho(x,t)= \rho_0(z) \det\left( \frac{\D X}{\D z}(z,t)\right)\inv, \quad x=X(z,t)\in\Omega_t,
\]
with $\rho=0$ outside $\Qdom$. 

Considering a family $\eps\mapsto X_\eps$ of flow maps smoothly depending 
on a variational parameter $\eps$, the variation 
$\delta X = (\D X_\eps/\D\eps)|_{\eps=0}$ induces a variation in density 
$\delta\rho=(\D\rho_\eps/\D\eps)|_{\eps=0}$ satisfying
\begin{equation}\label{e:drho}
-\frac{\delta \rho}\rho = \delta\log \det\left( \frac{\D X}{\D z}(z,t)\right)
= \tr \left(\frac{ \D\delta X}{\D z }\left(\frac{\D X}{\D z}\right)\inv\right)
\end{equation}
Introducing $\tilde v(x,t)=\delta X(z,t)$, $x=X(z,t)$, it follows
\begin{equation}\label{delrho}
-\frac{\delta\rho}\rho = \sum_j \frac{\D\tilde v_j}{\D x_j} =  \nabla\cdot \tilde v.
\end{equation}
For variations that leave the density invariant, necessarily $\nabla\cdot\tilde v=0$.

We now turn to consider the variation of the action or transport cost
as expressed in terms of the flow map:
\begin{equation}\label{d:calA}
{\mathcal A} =  \int_0^1\int_{\R^d} \rho(x,t)|v(x,t)|^2 dx\,dt
= \int_0^1\int_{\Omega_0}  |\dot X(z,t)|^2 dz\,dt\,.
\end{equation}
For flows preserving $\rho=1$ in $\Qdom$, 
of course $\nabla\cdot v=0$.
Computing the first variation of ${\mathcal A}$ about such a flow,
after an integration by parts in $t$ and changing to Eulerian variables, we find
\begin{align}\label{e:varA}
\frac{ \delta{\mathcal A}}2 &=  
\int_0^1\int_{\Omega_0}  \dot X \cdot \delta\dot X \,dz\,dt
\nonumber\\&
=  \left.\int_{\Omega_0} \dot X\cdot \delta X\,dz\,\right|_{t=1}
-\int_0^1  \int_{\Omega_0}
\ddot X \cdot \delta X \,dz\,dt
\nonumber\\&
= \left.\int_{\Omega_t} v\cdot \tilde v\,dx\,\right|_{t=1}
-\int_0^1  \int_{\Omega_t}
(\D_t v+v\cdot\nabla v)\cdot \tilde v \,dx\,dt.
\end{align}

Recall that any $L^2$ vector field $u$ on $\Omega_t$ 
has a unique Helmholtz decomposition 
as the sum of a gradient and a field $L^2$-orthogonal 
to all gradients, which is divergence-free with zero normal component
at the boundary:
\begin{equation}\label{e:Helm}
u = \nabla p + w, \qquad \nabla\cdot w = 0 \ \ \te{in $\Omega_t$},
\quad w\cdot n = 0\ \ \te{on $\D\Omega_t$}.
\end{equation}
If we loosen the requirement that $w\cdot n=0$ on the boundary,
it is still the case that
\[
\int_{\D\Omega_t} w\cdot n\,dS = 
\int_{\Omega_t} \nabla\cdot w \,dx = 0,
\]
It follows that the space orthogonal to all divergence-free fields on $\Omega_t$
is the space of gradients $\nabla p$ such that $p$ is constant on the
boundary, and we may take this constant to be zero:
\begin{equation}\label{e:zerop}
\boxed{
\ \  p=0 \quad\text{on $\D\Omega_t$.}
}
\end{equation}
Requiring $\delta{\mathcal A}=0$ for arbitrary virtual displacements having 
$\nabla \cdot \tilde v=0$ (and $\tilde v=0$ at $t=1$ at first), we 
find that necessarily $u=-(\D_t v+v\cdot\nabla v)$ is such a gradient.
Thus the incompressible Euler equations hold in $\Qdom$:
\begin{equation}\label{e:euler-v}
\D_t v+v\cdot\nabla v+\nabla p=0\,,\quad \nabla\cdot v=0 \quad\text{in $\Qdom$},
\end{equation}
where $p:\bar\Qdom\to\R$ is smooth and satisfies~\eqref{e:zerop}.

Finally, we may consider variations $\tilde v$ that do not vanish at $t=1$.
However, we require $\tilde v\cdot n=0$ on $\D\Omega_1$ in this case
because the target domain $\Omega_1$ should be fixed. 
That is, the allowed variations in the flow map $X$ must fix the image
at $t=1$: 
\begin{equation}
\Omega_1= X(\Omega_0,1).
\end{equation}
The vanishing of the integral term at $t=1$ in \eqref{e:varA} 
then leads to the requirement that $v$ is a gradient at $t=1$.
For $t=1$ we must have
\begin{equation}\label{e:vphi}
v=\nabla \phi
\quad\te{in $\Omega_t$}.
\end{equation}

We claim this gradient representation actually must hold for all $t\in[0,1]$.
Let $v=\grad\phi+w$ be the Helmholtz decomposition of $v$, and 
for small $\eps$ consider the family of flow maps generated by 
\begin{equation}
\dot X_\eps(z,t) = (v+\eps w)(X_\eps(z,t),t)\,,
\qquad
X_\eps(z,0)= z.
\end{equation}
Corresponding to this family, the action from \eqref{d:calA} takes the form
\begin{equation}
{\mathcal A}_\eps = 
\int_0^1\int_{\Omega_0}  |\dot X_\eps(z,t)|^2 dz\,dt
= \int_0^1\int_{\Omega_t} |\grad\phi|^2 + |(1+\eps)w|^2\,dx\,dt
\end{equation}
Because $w\cdot n=0$ on $\D\Omega_t$, the domains $\Omega_t$ do not
depend on $\eps$, and the same is true of $\nabla\phi$ and $w$, so
this expression is a simple quadratic polynomial in $\eps$. Thus
\begin{equation}
\frac12 \left.\frac{d{\mathcal A}_\eps}{d\eps} \right|_{\eps=0} = \int_0^1\int_{\Omega_t}  |w|^2\,dx\,dt \,
\end{equation}
and consequently it is necessary that $w=0$ if $\delta\mathcal A=0$.
This proves the claim.

The Euler equation in \eqref{e:euler-v} is now a spatial gradient, and one 
can add a function of $t$ alone to $\phi$ to ensure that
\begin{equation}\label{e:phit}
\boxed{
\D_t\phi + \frac12|\nabla\phi|^2 + p = 0,
\quad\Delta \phi = 0
\quad\te{in $\Omega_t$}.
}
\end{equation}
The equations boxed above,  including \eqref{e:phit}
together with the zero-pressure boundary condition \eqref{e:zerop}
and the kinematic condition that the boundary of $\Omega_t$ moves with
normal velocity $v\cdot n$ (coming from \eqref{e:Omegat}-\eqref{e:Xt}), comprise what we shall call the 
\emph{Euler droplet} equations, for incompressible, inviscid, potential flow 
of fluid with zero surface tension and zero pressure at the boundary.

\begin{definition}\label{d:Eulerdroplet}
A smooth \emph{solution} of the Euler droplet equations is a triple 
$(\Qdom,\phi,p)$ such that $\phi,p\colon\bar\Qdom\to\R$ are smooth
and  the equations \eqref{d:Omega}, \eqref{e:Omegat}, \eqref{e:Xt}, 
\eqref{e:vphi}, \eqref{e:phit}, \eqref{e:zerop}
all hold.
\end{definition}

\begin{proposition}\label{p:Eulersmooth}
For smooth flows $X$ that deform $\Omega_0$ as above,
that respect the density constraint $\rho=1$ and fix 
$\Omega_1=X(\Omega_0,1)$,
the action $\mathcal A$ in \eqref{d:calA}
is critical with respect to smooth variations if and only if 
$X$ corresponds to a smooth solution of the Euler droplet equations.
\end{proposition}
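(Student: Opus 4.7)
My plan is to prove the two implications separately. The \emph{necessity} direction is essentially already accomplished in the derivation carried out in subsection~\ref{ss:geo}; I would consolidate those computations and verify that the collected conditions are exactly the Euler droplet equations of Definition~\ref{d:Eulerdroplet}. For the \emph{sufficiency} direction, I would run the first-variation formula \eqref{e:varA} in reverse, using the Euler equation to replace $\D_t v+v\cdot\nabla v$ by $-\nabla p$ and integrating by parts to absorb this against divergence-free virtual displacements with the help of the zero-pressure boundary condition. The endpoint term at $t=1$ will then vanish because $v|_{t=1}=\nabla\phi$ and the admissible displacement $\tilde v$ is tangent to $\D\Omega_1$.

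For necessity, I would organize the test variations in three stages. First, using compactly supported $\tilde v\in C_c^\infty(\Qdom,\R^d)$ with $\nabla\cdot\tilde v=0$ in $\Qdom$, vanishing near $t=0,1$, the formula \eqref{e:varA} forces $\D_t v+v\cdot\nabla v$ to lie in the $L^2$-orthogonal complement of divergence-free fields on each slice $\Omega_t$; by Helmholtz decomposition this yields the existence of a smooth pressure $p$ with $\D_t v+v\cdot\nabla v+\nabla p=0$. Second, by loosening the requirement $\tilde v\cdot n=0$ on $\D\Omega_t$ (still requiring $\nabla\cdot\tilde v=0$ in $\Omega_t$), one obtains as in \eqref{e:Helm}ff.\ that $p$ is constant on $\D\Omega_t$, and the constant can be normalized to zero by absorbing a function of $t$ into $\phi$. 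Third, by testing with $\tilde v$ that does not vanish at $t=1$ (with $\tilde v\cdot n=0$ on $\D\Omega_1$, so that $\Omega_1$ is preserved), and by the one-parameter family $v+\eps w$ constructed from the Helmholtz decomposition as in the text, one concludes $v=\nabla\phi$ throughout $\Qdom$. Finally, the Bernoulli identity \eqref{e:phit} is obtained by integrating the spatial gradient form of the Euler equation and adjusting $\phi$ by a function of $t$ only; harmonicity $\Delta\phi=0$ is just the incompressibility $\nabla\cdot v=0$.

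For sufficiency, suppose $(\Qdom,\phi,p)$ is a smooth solution of the Euler droplet equations and take any admissible smooth variation, i.e.\ $\tilde v$ smooth on $\bar\Qdom$ with $\nabla\cdot\tilde v=0$ in $\Omega_t$, $\tilde v=0$ at $t=0$, and $\tilde v\cdot n=0$ on $\D\Omega_1$ at $t=1$. Substituting $v=\nabla\phi$ and $\D_t v+v\cdot\nabla v=-\nabla p$ into \eqref{e:varA} gives
\begin{equation*}
\tfrac12\delta\mathcal{A}
= \int_{\Omega_1}\nabla\phi\cdot\tilde v\,dx
+\int_0^1\!\int_{\Omega_t}\nabla p\cdot\tilde v\,dx\,dt.
\end{equation*}
Integration by parts in space turns each interior integrand into the sum of a boundary term $\int_{\D\Omega_t}p\,\tilde v\cdot n\,dS$, which vanishes by \eqref{e:zerop}, and a bulk term $-\int_{\Omega_t}p\,\nabla\cdot\tilde v\,dx$, which vanishes by the constraint $\nabla\cdot\tilde v=0$. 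The same integration by parts applied to the endpoint term at $t=1$ yields $\int_{\D\Omega_1}\phi\,\tilde v\cdot n\,dS-\int_{\Omega_1}\phi\,\nabla\cdot\tilde v\,dx$, and both terms vanish by the admissibility conditions at $t=1$. Hence $\delta\mathcal{A}=0$ for every such variation.

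The main obstacle I anticipate is not any single calculation but the careful identification of the admissible class of variations and the linearized density constraint \eqref{delrho}: one must verify that every smooth divergence-free $\tilde v$ satisfying $\tilde v(\cdot,0)=0$ and $\tilde v\cdot n=0$ on $\D\Omega_1$ is actually realized by a smooth one-parameter family of flow maps $X_\eps$ that preserve $\rho\equiv 1$ in $\Qdom$ and fix $\Omega_1$. This is a standard construction: given $\tilde v$, solve the ODE $\D_\eps Y_\eps(x,t)=\tilde v(Y_\eps(x,t),t)$ with $Y_0=\id$ and set $X_\eps(z,t)=Y_\eps(X(z,t),t)$; divergence-freeness of $\tilde v$ preserves volumes, and $\tilde v\cdot n=0$ on $\D\Omega_1$ preserves the set $\Omega_1$. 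Once this realization is in place, the two directions above combine to give the proposition.
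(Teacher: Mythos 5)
Your proposal is correct and follows essentially the same route as the paper: the necessity direction is exactly the variational derivation of subsection~\ref{ss:geo} (Helmholtz decomposition forcing $\D_t v+v\cdot\nabla v=-\nabla p$ with $p=0$ on $\D\Omega_t$, the endpoint variation at $t=1$, and the $v+\eps w$ family to propagate $v=\nabla\phi$ to all $t$), of which the proposition is the paper's summary. Your explicit sufficiency argument (reversing \eqref{e:varA} and integrating by parts against \eqref{e:zerop} and $\nabla\cdot\tilde v=0$) and your realization of admissible $\tilde v$ by the flow $Y_\eps$ are correct routine completions of details the paper leaves implicit.
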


\subsection{Weak solutions and Galilean boost}

Here we record a couple of simple basic properties
of solutions of the Euler droplet equations.

\begin{proposition}\label{p:Eulerweak1}
Let $(\Qdom,\phi,p)$ be a smooth solution of the Euler droplet equations.
Let $\rho=\one_\Qdom$ and $v=\one_\Qdom\grad\phi$,
and extend $p$ as zero outside $\Qdom$. 

\noindent
(a) The  Euler equations \eqref{e:euler1}-\eqref{e:euler2}
hold in the sense of distributions on 
$\R^d\times[0,1]$.

\noindent
(b)  The mean velocity 
\begin{equation}
\bar v = \frac1{|\Omega_t|} \int_{\Omega_t} v(x,t)\,dx
\end{equation}
is constant in time, and the action decomposes as 
\begin{equation}
{\mathcal A} =  \int_0^1\int_{\Omega_t} |v-\bar v|^2 dx\,dt + 
|\Omega_0| |\bar v|^2\,.
\end{equation}
(c) Given any constant vector $b\in\R^d$, another smooth solution
$(\hat Q,\hat\phi,\hat p)$
of the Euler droplet equations is given by a Galilean boost, via
\begin{gather}
\hat Q= \bigcup_{t\in[0,1]} (\Omega_t+bt)\times\{t\} \,,
\\
\hat\phi(x+bt,t) = \phi(x,t)+b\cdot x+\frac12|b|^2t\,,\quad
\hat p(x+bt,t)=p(x,t)\,.
\label{e:boostphip}
\end{gather}
\end{proposition}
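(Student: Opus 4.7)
The plan is to handle the three parts separately, relying throughout on the smoothness of $\phi$ and $p$ up to $\bar Q$, the boundary condition $p = 0$ on $\D\Omega_t$, and the kinematic fact that the lateral boundary of $Q$ is advected by $v$, so that its spacetime normal has time component $-v\cdot n$ paired with spatial component $n$.

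For part (a), I would test against arbitrary $q\in C_c^\infty(\R^d\times[0,1])$ and $\tilde v\in C_c^\infty(\R^d\times[0,1],\R^d)$. Since $\rho=\one_Q$ and $\rho v$, $p$ are supported in $\bar Q$, the left sides of \eqref{e:eulerwk1}--\eqref{e:eulerwk2} reduce to spacetime integrals over $Q$. For the continuity equation I would pull back to Lagrangian coordinates via $X$: because $\nabla\cdot v=0$ the Jacobian is identically $1$, and $\D_tq + v\cdot\nabla q$ becomes $(d/dt)q(X(z,t),t)$, which integrates in $t$ to give the right side of \eqref{e:eulerwk1}. For \eqref{e:eulerwk2}, I would use the identity $v\cdot(\D_t\tilde v + v\cdot\nabla\tilde v) = (\D_t+v\cdot\nabla)(v\cdot\tilde v) - (\D_tv + v\cdot\nabla v)\cdot\tilde v$. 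Reynolds transport integrates the first piece over $Q$ to the boundary-in-$t$ terms on the right of \eqref{e:eulerwk2}, while \eqref{e:phit} combined with $v=\nabla\phi$ yields $\D_tv + v\cdot\nabla v = -\nabla p$ inside $Q$. Adding the resulting $\int_0^1\int_{\Omega_t}\nabla p\cdot\tilde v\,dx\,dt$ to the pressure term in \eqref{e:eulerwk2} produces $\int_0^1\int_{\Omega_t}\nabla\cdot(p\tilde v)\,dx\,dt = \int_0^1\int_{\D\Omega_t}p\,\tilde v\cdot n\,dS\,dt$, which vanishes because $p=0$ on $\D\Omega_t$.

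For part (b), the cleanest route is to test \eqref{e:eulerwk2} against $\tilde v(x,t)=\chi(t)\eta(x)e_j$ with $\chi\in C_c^\infty((0,1))$ and $\eta\in C_c^\infty(\R^d)$ equal to $1$ on a neighborhood of the bounded set $\bigcup_{t\in[0,1]}\Omega_t$. Since $\nabla\tilde v$ vanishes where $\rho$ and $p$ are nonzero, all terms drop except $\int_0^1\chi'(t)\int_{\Omega_t}v_j\,dx\,dt$, which must vanish for every such $\chi$; hence each component of $\int_{\Omega_t}v\,dx$ is constant in $t$, and the incompressibility identity $|\Omega_t|\equiv|\Omega_0|$ then yields that $\bar v$ is constant. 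The action decomposition follows from the pointwise identity $|v|^2 = |v-\bar v|^2 + 2\bar v\cdot(v-\bar v) + |\bar v|^2$ integrated over $\Omega_t$ and then $t\in[0,1]$, the cross term vanishing by definition of $\bar v$. Part (c) is a direct substitution: writing $y = x + bt$, one computes $\nabla_y\hat\phi = \nabla_x\phi + b$, hence $\hat v = v + b$ and $\Delta\hat\phi = \Delta\phi = 0$; the Bernoulli identity for $(\hat\phi,\hat p)$ holds once the $b\cdot\nabla\phi$ and $\pm\tfrac12|b|^2$ terms cancel, and $\hat p$ vanishes on $\D\hat\Omega_t$ because $p$ does on $\D\Omega_t$. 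The boosted Lagrangian flow $\hat X(z,t) = X(z,t) + bt$ satisfies $\D_t\hat X = \hat v(\hat X,t)$ and maps $\Omega_0$ onto $\hat\Omega_t = \Omega_t + bt$, confirming \eqref{e:Omegat}--\eqref{e:Xt} for $\hat Q$.

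The only place where genuine care is required is the spacetime integration by parts in (a): one must track the lateral boundary of $Q$ and verify that the kinematic matching of normals together with the vanishing of $p$ there kill every stray boundary contribution. Once that bookkeeping is done, the remainder of the proposition is essentially algebraic.
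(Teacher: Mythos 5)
Your proposal is correct and follows essentially the same route as the paper: part (a) is proved by the Lagrangian pullback (Jacobian $\equiv 1$) for the continuity equation and by integrating the pressure term by parts against $p=0$ on $\D\Omega_t$ while reducing the transport terms to $\int_Q(\D_t v+v\cdot\nabla v+\nabla p)\cdot\tilde v=0$, exactly as in the text. The paper dismisses (b) and (c) as straightforward, and your test-function argument for the constancy of $\int_{\Omega_t}v\,dx$ and your direct substitution for the Galilean boost are valid ways of filling in those details.
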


\begin{proof} To prove (a), what we must show is the following:
For any smooth test functions 
$q\in C_c^\infty(\R^d\times[0,1],\R)$ and 
$\tilde v\in C_c^\infty(\R^d\times[0,1],\R^d)$,
\begin{align}
\int_Q (\D_t q + v\cdot \nabla q) \,dx\,dt &= 
\left. \int_{\Omega_t} q\,dx \right|^{t=1}_{t=0}
\label{e:wk1Qa}
\\
\int_Q v\cdot (\D_t \tilde v +  v\cdot\nabla \tilde v)+
p\nabla \cdot \tilde v \,dx\,dt &= 
\left. \int_{\Omega_t} \tilde v\cdot v\,dx \right|^{t=1}_{t=0}
\label{e:wk1Qb}
\end{align}
Changing to Lagrangian variables via $x=X(z,t)$,
writing $\hat q(z,t)=q(x,t)$, and using
incompressibility, equation \eqref{e:wk1Qa} is equivalent to 
\begin{align}
\int_0^1\int_{\Omega_0} \frac{d}{dt}\hat q(z,t)\,dz\,dt
 &= 
\left. \int_{\Omega_0} \hat q(z,t)\,dz \right|^{t=1}_{t=0} \,.
\end{align}
Evidently this holds.
In \eqref{e:wk1Qb}, we integrate the pressure term by parts,
and treat the rest as in \eqref{e:varA} to find that \eqref{e:wk1Qb}
is equivalent to 
\begin{align}
\int_Q (\D_t v+v\cdot\nabla v+\nabla p)\cdot\tilde v \,dx\,dt = 0.
\end{align}
Then (a) follows. The proof of parts (b) and (c) is straightforward.
\end{proof}

\subsection{Ellipsoidal Euler droplets}

The initial-value problem for the Euler droplet equations is a difficult
fluid free boundary problem, one that may be treated by the methods
developed by Wu \cite{Wu97,Wu99}. For flows with vorticity and smooth enough
initial data, smooth solutions for short time have been shown to exist in 
\cite{Lindblad,CoutShko2007,CoutShko2010}.  

In this section,  we describe simple, particular Euler droplet 
solutions for which the fluid domain $\Omega_t$ remains ellipsoidal for all $t$.
Our main result is the following.
\begin{proposition}\label{p:Edrop} Given a constant $r>0$,
let $a(t)=(a_1(t),\ldots,a_d(t))$ be any constant-speed geodesic on
the surface in $\R_+^d$ determined by the relation
\begin{equation}\label{ed:c}
a_1\cdots a_d=r^d. 
\end{equation}
Then this determines an Euler droplet solution $(\Qdom,\phi,p)$
with $\Omega_t$ equal to the ellipsoid $E_{a(t)}$ 
given by
\begin{equation}\label{ed:omega}
E_a = \Bigl\{ x\in\R^d: \sum_j (x_j/a_j)^2 <1\Bigr\},
\end{equation}
and potential and pressure given by 
\begin{equation}\label{ed:phip}
\phi(x,t) = \frac12 \sum_j  \frac{\dot a_jx_j^2}{a_j}  - \beta(t)\,,
\qquad
p(x,t) = \dot\beta \left(1-\sum_j \frac{x_j^2}{a_j^2}\right) ,
\end{equation}
with 
\begin{equation}\label{ed:beta}
\dot\beta(t) = \frac12
\frac{\sum_j \dot a_j^2/a_j^2}{\sum_j 1/ a_j^2 } .
\end{equation}
\end{proposition}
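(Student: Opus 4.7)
The plan is to verify each of the conditions in Definition 3.1 directly, using the ansatz provided. I would treat $\Omega_t=E_{a(t)}$ and the formulas for $\phi,p$ as given and check: (i) harmonicity of $\phi$, (ii) the kinematic boundary condition that $\partial E_{a(t)}$ moves with the normal velocity $v\cdot n$ induced by $v=\nabla\phi$, (iii) the Bernoulli relation $\D_t\phi+\tfrac12|\nabla\phi|^2+p=0$ in $\Omega_t$, and (iv) $p=0$ on $\partial\Omega_t$. The last is immediate from \eqref{ed:phip}.

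For (i), I would note that $\Delta\phi=\sum_j \dot a_j/a_j$, and observe that differentiating the volume constraint $a_1\cdots a_d=r^d$ (i.e.\ $\sum_j\log a_j$ constant) yields exactly $\sum_j\dot a_j/a_j=0$. For (ii), write the boundary as the level set $F(x,t)=\sum_j x_j^2/a_j^2-1=0$; then $\partial_tF=-2\sum_j\dot a_j x_j^2/a_j^3$ and, since $v_j=\partial_j\phi=\dot a_j x_j/a_j$, one finds $v\cdot\nabla F=2\sum_j\dot a_j x_j^2/a_j^3$, so $\partial_tF+v\cdot\nabla F=0$ on $\partial E_{a(t)}$ (indeed everywhere).

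The heart of the proof, and the step I expect to demand the most care, is (iii): matching the ansatz for $p$ with the Bernoulli relation via the geodesic equation on the constraint surface. A direct computation gives
\begin{equation*}
\D_t\phi+\tfrac12|\nabla\phi|^2=\tfrac12\sum_j \ddot a_j\,x_j^2/a_j-\dot\beta,
\end{equation*}
so the requirement $-\D_t\phi-\tfrac12|\nabla\phi|^2=p=\dot\beta(1-\sum_j x_j^2/a_j^2)$ is equivalent to
\begin{equation*}
\ddot a_j\,a_j = 2\dot\beta \quad\text{for every } j=1,\dots,d,
\end{equation*}
together with the constant term matching. This is exactly the Lagrange-multiplier form of the geodesic equation on the level set of $g(a)=\sum_j\log a_j$, whose gradient is $(1/a_j)_j$: a constant-speed geodesic must satisfy $\ddot a_j=\mu/a_j$ for a scalar Lagrange multiplier $\mu(t)$. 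To identify $\mu=2\dot\beta$, I would differentiate the constraint $\sum_j\dot a_j/a_j=0$ once more in $t$ to obtain $\sum_j \ddot a_j/a_j=\sum_j\dot a_j^2/a_j^2$, then substitute $\ddot a_j=\mu/a_j$ and solve for $\mu$; this yields precisely the formula \eqref{ed:beta} for $2\dot\beta$.

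Combining (i)--(iv), the triple $(\Qdom,\phi,p)$ satisfies the boxed equations \eqref{e:Omegat}, \eqref{e:vphi}, \eqref{e:phit}, \eqref{e:zerop}, hence is a smooth Euler droplet solution in the sense of Definition 3.1. The main obstacle is essentially bookkeeping: ensuring that the single scalar Lagrange multiplier from the geodesic ODE simultaneously produces a pressure whose spatial profile is quadratic of the exact form $\dot\beta(1-\sum_j x_j^2/a_j^2)$ and vanishes on the moving boundary; the fact that the constraint surface is the level set of the \emph{logarithmic} volume $\sum_j\log a_j$ is what aligns the Lagrange-multiplier direction $(1/a_j)_j$ with the coefficients of $x_j^2$ appearing in $\D_t\phi$.
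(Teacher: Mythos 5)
Your proposal is correct and follows essentially the same route as the paper: verify harmonicity from $\sum_j\dot a_j/a_j=0$, reduce the Bernoulli relation on the moving ellipsoid to $a_j\ddot a_j=2\dot\beta$, identify this with the Lagrange-multiplier form of the constrained geodesic equation, and recover $\dot\beta$ by differentiating the constraint. The only (immaterial) differences are that you check the kinematic condition via the level set $F=\sum_j x_j^2/a_j^2-1$ rather than the Lagrangian relation $(X_j/a_j)\dot{}\,=0$, and you phrase the constraint surface as a level set of $\sum_j\log a_j$ rather than of $\prod_j a_j$, which merely rescales the multiplier.
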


\hide{\red
\subsubsection{Droplets in dimension $d=2$.}
We seek incompressible flows inside a time-dependent elliptical domain where
\begin{equation}\label{eq2:domain}
\frac{x^2}{a(t)^2}+\frac{y^2}{b(t)^2}< 1,
\end{equation}
with the geometric mean $r=(ab)^{1/2}$ constant in time for volume conservation. 
We will find such flows as time-stretched straining flows $(X,Y)$, 
satisfying
\[
(\dot X,\dot Y) = v(X,Y,t) = \alpha(t)(X,-Y) \,.
\]
Such flows have velocity potential satisfying $v=\nabla\phi$, with 
\begin{equation}\label{eq2:phi}
\phi(x,y,t) = \frac12\alpha(t) (x^2-y^2) -\beta(t),
\end{equation}
\[ \quad \D_t\phi  = \frac12 \dot\alpha(x^2-y^2)-\dot\beta,
\quad \frac12|\nabla\phi|^2 = \frac12\alpha^2(x^2+y^2) \,.
\]
To satisfy the Bernoulli equation we require $\D_t\phi+\frac12|\nabla\phi|^2=0$
on the boundary of the ellipse $(x,y)=(a\cos\theta,b\sin\theta)$, or 
\[
(\dot \alpha +\alpha^2)a^2\cos^2\theta 
+(-\dot\alpha+\alpha^2)b^2\sin^2\theta = 2\dot\beta
\]
In order for this to hold independent of $\theta$,  we require
\[
(\dot \alpha +\alpha^2)a^2  = -(\dot\alpha-\alpha^2)b^2=2\dot\beta .
\]
Due to the motion of the boundary points $(a,0)$, $(0,b)$ we need 
\[
\dot a = \alpha a, \quad \dot b = -\alpha b,
\]
whence
\[
2\dot\beta = 
a \ddot a = \frac{2b^2 \dot a^2}{ (a^2+b^2)} = \frac{2r^4 \dot a^2}{ (a^4+r^4)} 
\]
because $r^2=ab$ is constant. Notice $\ddot a>0$ in all cases. 
There is a first integral (because kinetic energy is conserved)
which we can find by writing
\[
\frac{\ddot a}{\dot a} =  2\dot a\left(\frac1a-\frac{a^3}{r^4+a^4}\right),
\]
whence we find that $a(t)$ and $b(t)$ are 
determined by solving
\begin{equation}\label{eq2:abdot}
 \frac{\dot a}a = \frac{c}{\sqrt{a^2+b^2}} = -\frac{\dot b}b = \alpha(t).
\end{equation}
for some real constant $c$.
From the derivation of the Bernoulli equation, inside the ellipse the pressure is
\begin{equation}\label{eq2:p}
p =- \D_t \phi - \frac12|\nabla\phi|^2 = \dot\beta\left(1-\frac{x^2}{a^2}-\frac{y^2}{b^2}\right)\,.
\end{equation}
where $\dot\beta$ is recovered from the equation
\begin{equation}\label{eq2:betadot}
 \dot\beta(t) = \left( \frac{c ab}{a^2+b^2}\right)^2.
\end{equation}

To summarize, an elliptical Euler droplet solution $(\Qdom,\phi,p)$ 
is determined in terms of any solution $(a(t),b(t))$ of \eqref{eq2:abdot} 
(with any real $c$) by \eqref{eq2:domain}, \eqref{eq2:phi}, 
\eqref{eq2:p}, and \eqref{eq2:betadot}.
We note that the speed of motion of the point $(a,b)$ on the hyperbola
$ab=r^2$ is constant: by \eqref{eq2:abdot},
\begin{equation}
\dot a^2+\dot b^2 = c^2.
\end{equation}
In the context of the fixed-endpoint problem, then, $|c|$ is
the distance along the hyperbola betweeen $(a(0),b(0))$ and $(a(1),b(1))$.
} 

%
\begin{proof}
The flow $X$ associated with a velocity potential of the form in \eqref{ed:phip}
must satisfy
\begin{equation}\label{ed:flowX}
\dot X_j = \alpha_j(t)X_j, \qquad \alpha_j = \frac{\dot a_j}{a_j}, \quad j=1,\ldots,d.
\end{equation}
Then $(X_j/a_j)\,\dot{}=0$ for all $j$, so the flow is purely dilational along
each axis and consequently ellipsoids are deformed to ellipsoids as claimed. 
Note that incompressibility corresponds to the relation
\[
\Delta\phi =  \sum_j \alpha_j = \sum_j \frac{\dot a_j}{a_j} = \frac{d}{dt}\log (a_1\cdots a_d )= 0.
\]

From \eqref{ed:phip} we next compute
\[ \D_t\phi_t+\frac12|\nabla\phi|^2 = -\dot\beta +
\frac12\sum_j (\dot\alpha_j+\alpha_j^2)x_j^2
= -\dot\beta + \frac12\sum_j \frac{\ddot a_j x_j^2}{a_j} \,.
\]
This must equal zero on the boundary where $x_j=a_j s_j$ with $s\in S_{d-1}$ arbitrary.
We infer that for all $j$,
\begin{equation}\label{ed:dotb}
a_j\ddot a_j = 2\dot\beta \,.
\end{equation}
The expression for pressure in \eqref{ed:phip} in terms of $\dot\beta$ then 
follows from \eqref{e:phit},
and $p=0$ on $\D\Omega_t$.

We  recover $\dot\beta$ by differentiating
the constraint twice in time. We find
\begin{eqnarray*}
0 &=& \sum_j \left(\sum_k a_1\cdots a_d \frac{\dot a_k}{a_k}\frac{\dot a_j}{a_j}
+ a_1\dots a_d \frac{a_j\ddot a_j-\dot a_j^2}{a_j^2} \right)
\\ &=& 0 + \sum_j \frac{2\dot\beta-\dot a_j^2}{a_j^2}
\end{eqnarray*}
whence \eqref{ed:beta} holds.

To get the first integral that corresponds to kinetic energy, multiply 
\eqref{ed:dotb}
by $2\dot a_j/a_j$
and sum to find
\[
0 = \sum_j \dot a_j\ddot a_j , \quad\te{whence}\quad 
\sum_j \dot a_j^2 = c^2 
\]
and we see that $c=|\dot a(t)|$ is the constant speed of motion.

It remains to see that \eqref{ed:dotb} are the geodesic equations on the
constraint surface. This follows because \eqref{ed:dotb} says that $\ddot a$
is parallel to the gradient of $F(a)=\sum_j\log a_j$, and the constraint
\eqref{ed:c} corresponds to staying on the level set $F(a)=\log r^d$.
%
This finishes the demonstration of Proposition~\ref{p:Edrop}.
\end{proof}

\begin{remark}\label{r:aconvex}
For later reference, we note that $\ddot a_j\ge 0$ for all $t$, due to 
\eqref{ed:dotb} and \eqref{ed:beta}. 
\end{remark}
\begin{remark} Given any two points on the surface described by 
the constraint \eqref{ed:c}, there exists a constant-speed geodesic
connecting them. This fact is
 a straightforward consequence of the Hopf-Rinow theorem
 on geodesic completeness~\cite[Theorem 1.7.1]{Jost}, 
because all closed and bounded subsets on the surface are compact.
\end{remark}
\begin{remark}\label{r:induce}
The Euclidean metric on the hyperboloid-like surface arises, in fact, as the metric induced by the Wasserstein distance \cite[Chap. 15]{Villani09}, because,
given any incompressible path $t\mapsto X(\cdot,t)$ 
of dilations, 
satisfying \eqref{ed:flowX} for some smooth $\alpha(t)$ 
and with $a_1\cdots a_d=r^d$, 
\[
\int_{\Omega_t} |v|^2\,dx = \int_{\Omega_t}\sum_j \alpha_j^2 x_j^2\,dx
 = \sum_j  \dot a_j^2 \int_{|z|\le 1} z_j^2\,dz\, r^d =
 \frac{\omega_dr^d}{d+2} \sum_j \dot a_j^2\,,
\]
where $\omega_d=|B(0,1)|$ is the volume of the unit ball in $\R^d$.
For a geodesic, this expression is constant for $t\in[0,1]$ and equals the action
${\mathcal A}_a$ in \eqref{d:calA} for the ellipsoidal Euler droplet.
\end{remark}

\subsection{Ellipsoidal Wasserstein droplets}\label{ss:Wdrop}

Let $(\Qdom,\phi,p)$ be an ellipsoidal Euler droplet solution as given by 
Proposition~\ref{p:Edrop}, so that  $\Omega_0=E_{a(0)}$ and $\Omega_1=E_{a(1)}$ are co-axial ellipsoids. 
We will call the optimal transport map $T$ between these co-axial ellipsoids 
an \emph{ellipsoidal Wasserstein droplet}. 
This is described and related to the Euler droplet as follows.

Given $A\in\R^d$, let $D_A=\diag(A_1,\ldots,A_d)$
denote the diagonal matrix with diagonal $A$. 
Then, given $\Omega_0=E_{a(0)}$, $\Omega_1=E_{a(1)}$ as above, 
the particle paths for the Wasserstein geodesic between the 
corresponding shape densities are given by linear interpolation via
\begin{equation}
T_t(z)= D_{A(t)}D_{A(0)}\inv z\,, \qquad
A(t) =(1-t) a(0)+t a(1)\,.
\end{equation}
Note that a point $z\in E_A$ if and only if 
$D_A\inv z$ lies in the unit ball $B(0,1)$ in $\R^d$.
Thus the Wasserstein geodesic flow takes ellipsoids to ellipsoids:
\[
T_t(\Omega_0) = E_{A(t)}  
\,, \quad t\in[0,1].
\]

Let $a(t)$, $t\in[0,1],$ be the geodesic on the hyperboloid-like
surface that corresponds to the Euler droplet that we started with. 
Recall that $\Omega_t= E_{a(t)}$ from Proposition~\ref{p:Edrop}.
Because each component $t\mapsto a_j(t)$ is convex by
Remark~\ref{r:aconvex}, it follows that for each $j=1,\ldots,d$,
\begin{equation}\label{i:aA}
a_j(t)\le A_j(t), \quad t\in[0,1].
\end{equation}
Because $E_{A}=D_A B(0,1)$,
we deduce from this the following important nesting property,
which is illustrated in 
Figure~\ref{fig:eulerdrop}  
(where for visibility the ellipses at times $t=\frac12$ and $t=1$ are
offset horizontally by $\frac{b}{2}$ and $b$ respectively).

\begin{proposition}\label{p:nest}
Given any corresponding elliptical Euler droplet and Wasserstein droplet
that deform one ellipsoid $\Omega_0=E_{a(0)}$ to another $\Omega_1=E_{a(1)}$,
the Euler domains remain nested inside their Wasserstein counterparts,
with
\begin{equation}
X(\Omega_0,t)=\Omega_t  \subset T_t(\Omega_0), \quad t\in[0,1].
\end{equation}
\end{proposition}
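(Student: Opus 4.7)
The claim factors into two elementary observations. First, from the definition \eqref{ed:omega} it is immediate that the ellipsoid $E_a$ is coordinate-monotone in its axis vector: if $a'_j \ge a_j > 0$ for every $j$, then $E_a \subset E_{a'}$, because the defining sum $\sum_j (x_j/a_j)^2$ decreases in each $a_j$. Since Proposition~\ref{p:Edrop} identifies $\Omega_t = E_{a(t)}$, and the setup of Section~\ref{ss:Wdrop} gives $T_t(\Omega_0) = E_{A(t)}$ with $A(t) = (1-t)a(0) + t a(1)$, it therefore suffices to establish \eqref{i:aA}, namely the scalar inequality $a_j(t) \le A_j(t)$ for each $j = 1,\ldots,d$ and $t \in [0,1]$.

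Second, I would read off \eqref{i:aA} directly from Remark~\ref{r:aconvex}. That remark records the convexity $\ddot a_j \ge 0$ coming from the geodesic equation $a_j \ddot a_j = 2\dot\beta$ in \eqref{ed:dotb}, together with the manifestly nonnegative expression \eqref{ed:beta} for $\dot\beta$ (the ratio of a nonnegative quadratic form to a strictly positive one). Since $a_j$ is convex on $[0,1]$ and $A_j$ is its affine interpolant agreeing at both endpoints, the standard chord bound yields $a_j(t) \le A_j(t)$ pointwise on $[0,1]$, which is exactly \eqref{i:aA}; combined with the monotonicity observation above, this proves the inclusion.

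I do not anticipate any real obstacle, since the heavy lifting was done in Proposition~\ref{p:Edrop} and Remark~\ref{r:aconvex}; what remains is essentially bookkeeping. The only point worth flagging is that a naive volume comparison cannot replace this argument: although $|\Omega_t| = \omega_d r^d$ is constant in $t$, the Wasserstein ellipsoid has volume $\omega_d \prod_j A_j(t)$, which strictly exceeds $\omega_d r^d$ for $t \in (0,1)$ whenever $a(0)\ne a(1)$, by the AM-GM inequality applied to the product $\prod_j A_j(t)$ subject to $\prod_j a_j(0) = \prod_j a_j(1) = r^d$. The nesting is a genuinely geometric statement about the co-axial shape of the two ellipsoid families, and coordinate-wise convexity of the Euler axis lengths is precisely the right tool.
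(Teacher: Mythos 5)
Your proposal is correct and follows essentially the same route as the paper: the inclusion is reduced to the componentwise inequality \eqref{i:aA}, which is obtained from the convexity $\ddot a_j>0$ of Remark~\ref{r:aconvex} via the chord bound against the affine interpolant $A_j(t)$, and then the coordinate-monotonicity of $E_a$ in its axis vector finishes the argument. Your closing observation that a volume comparison cannot substitute for this is a nice sanity check but is not needed.
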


\begin{remark}\label{r:nest}
In terms of the notation of this subsection, the straining flow $X$ associated
with the Euler droplet is given by $X(z,t)=D_{a(t)}D_{a(0)}\inv z$ in terms of the
constant-speed geodesic $a(t)$ of Proposition~\ref{p:Edrop}.
Due to \eqref{i:aA}, this flow satisfies, for each $j=1,\ldots,d$ and $z\in\R^d$, 
\[
|X_j(z,t)| = \frac{a_j(t)}{a_j(0)} |z_j| \le \frac{A_j(t)}{A_j(0)} |z_j| = |T_t(z)_j|.
\]
For the nesting property $X(\hat\Omega,t)\subset T_t(\hat\Omega)$ 
to hold, convexity of $\hat\Omega$ is not sufficient in general. 
However, a sufficient condition is
that  whenever $\alpha_j\in[0,1]$ for $j=1,\ldots,d$, 
 \[
x=(x_1,\ldots,x_d)\in\hat\Omega
\quad\mbox{implies}\quad
D_\alpha x = (\alpha_1x_1,\ldots,\alpha_nx_n)\in\hat\Omega.
\]
\end{remark}

\begin{figure}
\includegraphics[trim=00 00 0 00, clip, height=7cm]{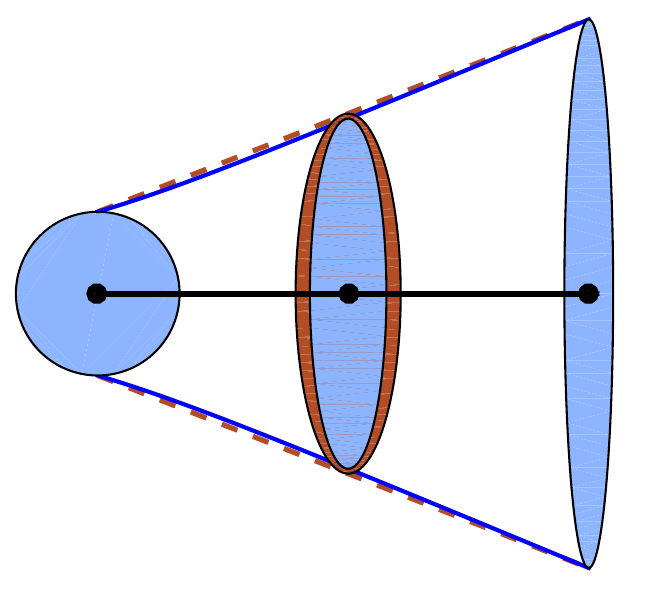}
\put(-199,84){\large $\Omega_0$}
\put(-112,83){\large $\Omega_{\frac12}$}
\put(-27,81){\large $\Omega_1$}
\put(-190,106){$0$}
\put(-105,110){$\frac{b}{2}$}
\put(-24,108){$b$}
\put(-82,60){ $T_{\frac12}(\Omega_0)$}
\thicklines
\put(-75,70){\vector(-1,1){12}}
\caption{Euler droplet (light blue) deforming a circle to an ellipse, nested 
inside a Wasserstein droplet (dark orange). 
Tracks of the center and endpoints of vertical major axis are indicated for both droplets.}
\label{fig:eulerdrop}
\end{figure}

\subsection{Action estimate for ellipsoidal Euler droplets}
For later use below, we describe how to bound the action for a boosted elliptical Euler droplet 
in terms of  action for the corresponding boosted elliptical Wasserstein droplet, 
in the case when the source and target domains
are respectively a  ball and translated ellipse:
\begin{lemma}\label{l:Eactionest}
Given $r>0$, $\hat a\in\R^d_+$ with $\hat a_1\cdots\hat a_d=r^d$, and $b\in \R^d$,  let
\[
\Omega_0=B(0,r), \qquad \Omega_1 = E_{\hat a}+b\,.
\]
Let $a(t)$, $t\in[0,1]$,
be the minimizing geodesic on the surface \eqref{ed:c} with
 \[
 a(0)=\hat r=(r,\ldots,r), \qquad a(1)=\hat a=(\hat a_1,\ldots,\hat a_d)\,.
 \]
Let $(\Qdom,\phi,p)$ be the elliptical Euler droplet solution corresponding to 
the geodesic $a$, and let ${\mathcal A}_a$ denote the corresponding action.
Then
\begin{equation}\label{e:Aa-action}
 d_W(\one_{\Omega_0},\one_{\Omega_1})^2
\le
{\mathcal A}_a \le 
 d_W(\one_{\Omega_0},\one_{\Omega_1})^2
+ \frac{\olam^4}{\ulam^2}\,\omega_d r^{d+2} \,,
\end{equation}
where
\begin{equation}\label{ed:ulol}
\ulam = \min \frac{\hat a_i}r, \qquad \olam = \max \frac{\hat a_i}r\,.
\end{equation}
\end{lemma}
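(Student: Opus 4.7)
The plan is to reduce both inequalities to the case $b=0$ via the Galilean boost of Proposition~\ref{p:Eulerweak1}(c), then to recast both sides as a common prefactor times a squared length on $\R^d$. On the droplet side, boosting by $b$ adds $\omega_d r^d|b|^2$ to the action by Proposition~\ref{p:Eulerweak1}(b), since the unboosted ellipsoidal droplet has vanishing mean velocity (because $\phi$ is even in each coordinate). On the Wasserstein side, the Brenier map from $B(0,r)$ to $E_{\hat a}+b$ is $D_{\hat a/r}z+b$; expanding the squared cost and using $\int_{B(0,r)}z\,dz=0$ yields the same additive offset $\omega_d r^d|b|^2$. Thus the $b$-dependent terms cancel from the desired chain of inequalities, and I reduce to $b=0$. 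In that case the lower bound $d_W^2\le\mathcal{A}_a$ is immediate: the Euler droplet is an admissible competitor in the infimum defining the shape distance $d_s$, so by \eqref{d:ge1}, $d_W(\one_{\Omega_0},\one_{\Omega_1})^2\le d_s(\Omega_0,\Omega_1)^2\le\mathcal{A}_a$.

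For the upper bound I use Remark~\ref{r:induce} to write $\mathcal{A}_a=\tfrac{\omega_d r^d}{d+2}\,c^2$, where $c^2=\int_0^1|\dot a|^2\,dt$ equals the squared length of the minimizing geodesic $a$ on the hypersurface $S=\{a\in\R_+^d:a_1\cdots a_d=r^d\}$ from $\hat r$ to $\hat a$. A direct calculation with the Brenier map $T=D_{\hat a/r}$ and the symmetry identity $\int_{B(0,r)}z_j^2\,dz=\omega_d r^{d+2}/(d+2)$ gives $d_W(\one_{B(0,r)},\one_{E_{\hat a}})^2=\tfrac{\omega_d r^d}{d+2}\,|\hat a-\hat r|^2$. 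Thus the upper bound reduces to the purely geometric estimate
\[c^2\le |\hat a-\hat r|^2+(d+2)\,\tfrac{\olam^4}{\ulam^2}\,r^2.\]

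To prove this I use the energy-minimizing property of the geodesic $a$ on $S$: for any smooth competitor $\tilde a\colon[0,1]\to S$ with $\tilde a(0)=\hat r$, $\tilde a(1)=\hat a$, one has $c^2\le\int_0^1|\dot{\tilde a}|^2\,dt$. The clean choice is the multiplicative interpolant $\tilde a_j(t)=r\,e^{t\mu_j}$, with $\mu_j=\log(\hat a_j/r)$: the constraint $\prod_j\hat a_j=r^d$ linearizes into $\sum_j\mu_j=0$, which keeps $\tilde a(t)$ on $S$ for every $t$, and the endpoints are automatic. Since $\dot{\tilde a}_j=\mu_j\tilde a_j$ and $\tilde a_j(t)\in[\ulam r,\olam r]$ (using $\ulam\le 1\le\olam$, a consequence of $\prod_j(\hat a_j/r)=1$),
\[\int_0^1|\dot{\tilde a}|^2\,dt\le \olam^2 r^2\sum_j\mu_j^2.\]
The elementary inequality $|\log x|\le|x-1|/\min(1,x)$ combined with $\min(1,\hat a_j/r)\ge\ulam$ gives $|\mu_j|\le|\hat a_j-r|/(\ulam r)$, so $\sum_j\mu_j^2\le|\hat a-\hat r|^2/(\ulam r)^2$ and $c^2\le(\olam/\ulam)^2|\hat a-\hat r|^2$. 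Converting to additive form via the crude bound $|\hat a_j-r|\le\olam r$ (so $|\hat a-\hat r|^2\le d\,\olam^2 r^2$) yields $c^2-|\hat a-\hat r|^2\le d\,\tfrac{\olam^4}{\ulam^2}\,r^2$, which is within the desired bound.

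The only substantive difficulty is this last geometric estimate: comparing the intrinsic geodesic distance on the curved hypersurface $S$ with the ambient Euclidean distance, uniformly under the eccentricity control $\ulam,\olam$. The key insight is the multiplicative ansatz $\tilde a_j=r\,e^{t\mu_j}$: the multiplicative constraint $\prod_j a_j=r^d$ becomes the linear condition $\sum_j\mu_j=0$, so staying on $S$ is automatic, and the energy integral becomes a transparent sum of exponentials that can be bounded using $\ulam,\olam$. Working directly with the nonlinear geodesic equation $\ddot a_j=\lambda(t)r^d/a_j$, or with an orthogonal-projection ansatz, would obscure this estimate.
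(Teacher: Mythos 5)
Your proof is correct, and it shares the paper's overall architecture: compute $d_W(\one_{\Omega_0},\one_{\Omega_1})^2=\omega_d r^d\bigl(|b|^2+|\hat a-\hat r|^2/(d+2)\bigr)$ explicitly, then play the energy-minimizing property of the geodesic $a$ against an explicit competitor path lying on the constraint surface $a_1\cdots a_d=r^d$. The competitor, however, is genuinely different. The paper uses the volume-normalized linear interpolant $\gamma_j(t)=\rho(t)^{1/d}A_j(t)$ with $A(t)=(1-t)\hat r+t\hat a$ --- the constant-volume dilation of the Wasserstein droplet, the same object underlying the nesting property of Proposition~\ref{p:nest} --- and bounds $|\dot\gamma|^2$ via the identity $\dot\gamma_j/\gamma_j=\dot A_j/A_j-\frac1d\sum_i\dot A_i/A_i$ together with convexity of $\rho$ (which gives $\rho\le1$, hence $\gamma_j\le\max_i A_i$). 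You instead take the log-linear competitor $\tilde a_j(t)=r\,e^{t\mu_j}$, which converts the multiplicative constraint into the linear condition $\sum_j\mu_j=0$, and control $\sum_j\mu_j^2$ by the elementary inequality $|\log x|\le|x-1|/\min(1,x)$. Both routes land on the same constant $\tfrac{d}{d+2}\tfrac{\olam^4}{\ulam^2}\omega_d r^{d+2}$; yours is arguably the more self-contained estimate (the competitor visibly stays on the surface and no convexity input is needed), while the paper's reuses geometry already set up for the nesting argument. Your handling of the boost (peeling off $\omega_d r^d|b|^2$ from both sides via Proposition~\ref{p:Eulerweak1}(b)--(c) and the vanishing of $\int_{B(0,r)}z\,dz$) and of the lower bound (Benamou--Brenier admissibility, i.e.\ \eqref{d:ge1} combined with $d_s(\Omega_0,\Omega_1)^2\le{\mathcal A}_a$) are both sound; the paper carries $b$ inside the competitor flow $S_t(z)+tb$, which effects the same cancellation.
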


\begin{proof}
First, consider the transport cost for mapping $\Omega_0$ to 
$\Omega_1$.  The (constant) velocity of particle paths
starting at $x\in B(0,r)$ is 
\[
u(x)= (r\inv D_{\hat a}-I) x+ b, 
\]
 and the squared transport cost or action is (substituting $x=rz$)
\begin{align}
 d_W(\one_{\Omega_0},\one_{\Omega_1})^2
& = \int_{B(0,r)} |u(x)|^2\,dx  
=
\sum_j \int_{B(0,r)} \left(\frac{\hat a_j}r-1\right)^2 z_j^2+ b_j^2 \,dz \,
\nonumber
\\&
= {\omega_dr^d} \left( |b|^2+ \frac{|\dot A|^2}{d+2}  \right)\,,
\label{ew:dW}
\end{align}
where $A(t)=(1-t)\hat r+t\hat a$ is the straight-line path from $\hat r$ to $\hat a$.

The mass density inside the transported ellipsoid $T_t(\Omega_0)$ is 
constant in space, given by 
\[
\rho(t) = \det DT_t\inv =  \prod_i  \frac r{A_i(t)} = 
\prod_i \left(1-t+t\frac{\hat a_i}r \right)\inv \,.
\]
Due to Remark~\ref{r:induce},
the corresponding action for the Euler droplet is bounded by that of the constant-volume path found
by dilating the elliptical Wasserstein droplet: Let
\[
\gamma_j(t) = \rho(t)^{1/d}A_j(t)\,.
\]
Then the flow $S_t(z)=r\inv D_{\gamma(t)}z $ is 
dilational and volume-preserving (with $\prod_j \gamma_j(t) \equiv r^d$)
and has zero mean velocity.
The flow $z\mapsto S_t(z)+tb$ takes $\Omega_0$ to $\Omega_1$,  
as on Figure  \ref{fig:eulerdrop}, with action
\begin{align}
{\mathcal A}_\gamma &= 
\int_0^1 \int_{B(0,r)} \sum_j \left( b_j+\frac{\dot\gamma_j z_j}{r}\right)^2 dz\,dt
\nonumber
\\&
= \omega_dr^d\left( |b|^2+ \frac1{d+2}\int_0^1 |\dot \gamma|^2\,dt \right)\,.
\label{ed:Ag}
\end{align}
Note that $\sum_j (\dot\gamma_j/\gamma_j)^2\le \sum_j (\dot A_j/A_j)^2$,
because
 \[
\frac{\dot\gamma_j}{\gamma_j} = \frac{\dot A_j}{A_j}+\frac{\dot\rho}{d\rho}
= \frac{\dot A_j}{A_j} - \frac1d \sum_i \frac{\dot A_i}{A_i}.
\] 
Because $\rho$ is convex we have
$\rho\le 1$, hence $\gamma_j^2\le \max A_i^2$.
Thus
\begin{equation}\label{e:gammabd}
|\dot \gamma|^2 \le (\max A_i^2) \,\sum_j \frac{\dot A_j^2}{A_j^2} 
\le \left(\frac{\max A_i^2}{\min A_i^2}\right) |\dot A|^2
\le \left(\frac{\max \hat a_i^2}{\min \hat a_i^2}\right) |\hat a-\hat r|^2\,.
\end{equation}
Plugging this back into \eqref{ed:Ag} and using \eqref{ew:dW}, we deduce that
\begin{equation}
{\mathcal A}_\gamma \le 
 d_W(\one_{\Omega_0},\one_{\Omega_1})^2
+ \frac{\omega_d r^d}{d+2}
\left(\frac{\max \hat a_i^2}{\min \hat a_i^2} \right)|\hat a-\hat r|^2\,.
\end{equation}
With the notation in \eqref{ed:ulol}, $\ulam$ and $\olam$ respectively
are the maximum and minimum eigenvalues of $DT_t$, and
because $|1-\hat a_i/r|\le\max(1,\hat a_i/r)\le \olam$ for all $i=1,\ldots,d$, 
this estimate implies
\begin{equation}
{\mathcal A}_a \le 
{\mathcal A}_\gamma \le 
 d_W(\one_{\Omega_0},\one_{\Omega_1})^2
 + \frac{d}{d+2} \frac{\olam^4}{\ulam^2}\,\omega_d r^{d+2} \,.
\end{equation}
This yields \eqref{e:Aa-action} and completes the proof.
\end{proof}

\subsection{Velocity and pressure estimates}

Lastly in this section we provide bounds on the velocity $v=\nabla\phi$
and pressure $p$ for the ellipsoidal Euler droplet solutions.
Note that because $1/a_j^2 \le \sum_i (1/a_i^2)$, 
\[
0 \le p \le \dot\beta \le \frac12 \sum_j \dot a_j^2
\le \frac12 \int_0^1 |\dot\gamma|^2\,dt 
\]
Using \eqref{e:gammabd} and the notation in \eqref{ed:ulol},
it follows
\begin{equation}\label{e:pbound}
0\le p\le 
\frac{\olam^4}{\ulam^2}\,r^{2} d\,.
\end{equation}
For the velocity, it suffices to note that in \eqref{ed:flowX},
$|X_j/a_j|\le1$ hence
$
|\dot X|^2 \le \sum_j \dot a_j^2 
$.
Thus the same bounds as above apply and we find 
\begin{equation}\label{e:vbound}
|\nabla \phi|^2 \le  \frac{\olam^4}{\ulam^2}\,r^{2} d.
\end{equation}

Finally, for a boosted elliptical Euler droplet, with velocity
boosted as in \eqref{e:boostphip} by a constant vector $b\in\R^d$, the same
pressure bound as above in \eqref{e:pbound} applies, and the 
same bound on velocity becomes
\begin{equation}\label{e:vboundboost}
|\nabla \hat\phi-b|^2 \le \frac{\olam^4}{\ulam^2}\,r^{2} d.
\end{equation}

We remark that in the constructions that we make in the next section,
for a given distortion ratio $\olam^4/\ulam^2$, the bounds
in \eqref{e:pbound}--\eqref{e:vboundboost}  can be made arbitrarily small
by requiring $r^2$ is small.


\section{Euler sprays}\label{s:spray}

Heuristically, an Euler spray is a countable disjoint superposition of 
solutions of the Euler droplet equations. Recall that the notation
$\sqcup_n \Qdom_n$ means the union of disjoint sets $\Qdom_n$.

\begin{definition}
An \textbf{Euler spray} is a triple $(\Qdom,\phi,p)$,
with $\Qdom$ a bounded open subset of $\R^d\times[0,1]$
and $\phi,p:\Qdom\to\R$,
such that there is a sequence $\{(\Qdom_n,\phi_n,p_n)\}_{n\in\N}$
of smooth solutions of the Euler droplet equations, such that 
$
\Qdom = \sqcup_{n=1}^\infty \Qdom_n
$
is a disjoint union of the sets $\Qdom_n$, and for each $n\in\N$, 
$\phi_n = \phi|_{\Qdom_n}$ and $p_n=p|_{\Qdom_n}$. 
\end{definition}

With each Euler spray that satisfies appropriate bounds
 we may associate a weak solution $(\rho,v,p)$ of the Euler system
 \eqref{e:euler1}-\eqref{e:euler2}.
The following result is a simple consequence of the weak formulation
in \eqref{e:eulerwk1}-\eqref{e:eulerwk2} together with Proposition~\ref{p:Eulerweak1}(a)
 and the dominated convergence theorem.
\begin{proposition}\label{p:Esprayweak}
 Suppose $(\Qdom,\phi,p)$ is an Euler spray such
that $|\nabla\phi|^2$ and $ p$ are integrable on $\Qdom$.
Then with $\rho=\one_\Qdom$ and $v=\one_\Qdom\grad\phi$
and with $p$ extended as zero outside $Q$,
the triple $(\rho,v,p)$ satisfies
the Euler system \eqref{e:euler1}-\eqref{e:euler2}
 in the sense of distributions on $\R^d\times[0,1]$.
\end{proposition}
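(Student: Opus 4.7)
The plan is to reduce the statement for the spray to the corresponding statement for each individual droplet (already established in Proposition~\ref{p:Eulerweak1}(a)), and then justify summing the infinitely many weak identities via dominated convergence, using the integrability hypotheses on $|\nabla\phi|^2$ and $p$.

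First I would fix test functions $q\in C_c^\infty(\R^d\times[0,1],\R)$ and $\tilde v\in C_c^\infty(\R^d\times[0,1],\R^d)$, and apply Proposition~\ref{p:Eulerweak1}(a) to each smooth droplet $(\Qdom_n,\phi_n,p_n)$. This yields, for every $n\in\N$, the identities
\begin{align*}
\int_{\Qdom_n}(\D_t q + \nabla\phi_n\cdot\nabla q)\,dx\,dt &= \left.\int_{\Omega_{n,t}} q\,dx\right|^{t=1}_{t=0},\\
\int_{\Qdom_n}\bigl(\nabla\phi_n\cdot(\D_t\tilde v + \nabla\phi_n\cdot\nabla\tilde v)+p_n\nabla\cdot\tilde v\bigr)\,dx\,dt &= \left.\int_{\Omega_{n,t}}\nabla\phi_n\cdot\tilde v\,dx\right|^{t=1}_{t=0},
\end{align*}
where $\Omega_{n,t}$ denotes the $t$-slice of $\Qdom_n$. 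Since the sets $\Qdom_n$ are pairwise disjoint and their union is $\Qdom$, the $t$-slices $\Omega_{n,t}$ are also disjoint with union $\Omega_t$, the $t$-slice of $\Qdom$.

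Next I would sum these identities over $n\in\N$ and recognize the left- and right-hand sides as integrals against $\rho=\one_\Qdom$, $\rho v=\one_\Qdom\nabla\phi$, and $p$ (extended by zero). The interchange of summation and integration is the only point that requires care. For the mass equation, dominated convergence applies because, by Cauchy--Schwarz,
\[
\sum_n\int_{\Qdom_n}|\nabla\phi_n\cdot\nabla q|\,dx\,dt \le \|\nabla q\|_\infty \,|\supp\nabla q|^{1/2}\left(\int_\Qdom |\nabla\phi|^2\,dx\,dt\right)^{1/2},
\]
which is finite by hypothesis, while $|\D_t q|$ is bounded with compact support so $\sum_n\int_{\Qdom_n}|\D_t q|\,dx\,dt\le \|\D_t q\|_\infty\,|\supp\D_t q|<\infty$. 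For the momentum equation, the same argument controls $\sum_n\int_{\Qdom_n}|\nabla\phi_n||\D_t\tilde v|\,dx\,dt$, while the quadratic term and the pressure term are controlled by $\int_\Qdom|\nabla\phi|^2\,dx\,dt$ and $\int_\Qdom|p|\,dx\,dt$ times $\|\nabla\tilde v\|_\infty$, both finite by hypothesis. The boundary terms at $t=0,1$ are likewise summable since $\sum_n\int_{\Omega_{n,t}}|\nabla\phi_n\cdot\tilde v|\,dx\le \|\tilde v\|_\infty|\supp\tilde v\cap(\R^d\times\{t\})|^{1/2}(\int_{\Omega_t}|\nabla\phi|^2\,dx)^{1/2}$, where the velocity trace is finite at $t=0,1$ as a consequence of the integrability hypotheses applied slice-wise (or, if needed, by first testing against a space-time cutoff and passing to the limit).

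After the interchange, the summed left-hand sides become integrals of $\rho(\D_t q+v\cdot\nabla q)$ and $\rho v\cdot(\D_t\tilde v+v\cdot\nabla\tilde v)+p\nabla\cdot\tilde v$ over $\R^d\times[0,1]$, and the summed right-hand sides become $\int_{\R^d}\rho q\,dx\big|_{t=0}^{t=1}$ and $\int_{\R^d}\rho v\cdot\tilde v\,dx\big|_{t=0}^{t=1}$ respectively. These are precisely \eqref{e:eulerwk1}--\eqref{e:eulerwk2}, completing the proof. The only potential obstacle is ensuring absolute summability uniformly in the test functions, but this is handled entirely by the given integrability of $|\nabla\phi|^2$ and $p$ together with the compact support and smoothness of $q$ and $\tilde v$, so no new analytic ingredient beyond Proposition~\ref{p:Eulerweak1}(a) and the dominated convergence theorem is required.
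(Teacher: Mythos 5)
Your proposal is correct and follows exactly the route the paper indicates: the paper states only that the result is "a simple consequence of the weak formulation in \eqref{e:eulerwk1}--\eqref{e:eulerwk2} together with Proposition~\ref{p:Eulerweak1}(a) and the dominated convergence theorem," and your argument is a faithful elaboration of that sentence. The one detail worth pinning down is the summability of the boundary terms at $t=0,1$, which follows most cleanly from conservation of kinetic energy for each droplet (so the slice integral of $|\nabla\phi_n|^2$ equals its time average, and the hypothesis on the space-time integral suffices).
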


Our main goal in this section is to prove Theorem~\ref{th2}.  
The strategy of the proof is simple to outline: 
We will approximate the optimal transport map 
$T\colon\Omega_0\to\Omega_1$ for the Monge-Kantorovich distance,
up to a null set,
by an `ellipsoidal transport spray' built from a countable collection
of ellipsoidal Wasserstein droplets.
The spray maps
$\Omega_0$ to a target $\Omega_1^\eps$ whose shape distance
from $\Omega_1$ is as small as desired. 
Then from the corresponding ellipsoidal Euler droplets nested inside
the Wasserstein ones, 
we construct the desired Euler spray $(Q,\phi,p)$ that connects $\Omega_0$ to $\Omega_1^\eps$ by a critical path for the action in \eqref{e:act1}.

\begin{remark} 
In general, for the Euler sprays that we construct, the domain $\Qdom=\sqcup_{n=1}^\infty Q_n$
has an irregular boundary $\D\Qdom$ strictly larger than the 
infinite union $\sqcup_{n=1}^\infty\D Q_n$ of smooth boundaries of individual 
ellipsoidal Euler droplets, since $\D Q$ contains limit points of sequences
belonging to infinitely many $Q_n$. 
\end{remark}

\subsection{ Approximating optimal transport by an
ellipsoidal transport spray}

Heuristically, an ellipsoidal transport spray is a countable disjoint superposition
of  transport maps on ellipsoids, whose particle trajectories do not intersect.

\begin{definition}\label{d:tspray}
An \textbf{ellipsoidal transport spray} on $\Omega_0$ is a map
$S\colon\Omega_0\to \R^d$, such that
\[
\Omega_0 = \bigsqcup_{n\in\N} \Omega_0\supn 
\]
is a disjoint union of ellipsoids, the restriction of $S$ to $\Omega_0\supn$
is an ellipsoidal Wasserstein droplet, and the linear interpolants $S_t$ defined by 
\[
S_t(z) = (1-t)z+t S(z), \qquad z\in\Omega_0,
\]
remain injections for each $t\in[0,1]$. 
\end{definition}

\begin{proposition}\label{p:ETspray}
Let $\Omega_0$, $\Omega_1$ be a pair of bounded open sets in $\R^d$ of equal volume,
and let $T\colon\Omega_0\to\Omega_1$ be the optimal transport map
for the Monge-Kantorovich distance with quadratic cost.
For any $\eps>0$, there is an ellipsoidal transport spray 
$S^\eps \colon\Omega_0^\eps\to \R^d$ such that 
\begin{itemize}
\item[(i)] $\Omega_0^\eps$ is a countable union of balls 
with $|\Omega_0\setminus\Omega_0^\eps|=0$, 
\item[(ii)] $\displaystyle
\sup_{z\in \Omega_0^\eps} |T(z)-S^\eps(z)| <  \eps \diam\Omega_1\,,
$
and
\item[(iii)] the $L^\infty$ transportation distance  between the uniform
distributions on $\Omega_1^\eps$ and $\Omega_1$ satisfies 
$d_\infty(\Omega_1^\eps,\Omega_1)< \eps \diam\Omega_1$.
\end{itemize}
\end{proposition}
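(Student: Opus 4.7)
My plan is to construct $S^\eps$ by piecewise affine approximation of $T=\nabla\psi$ on a Vitali covering of $\Omega_0\setminus\Sigma_0$ by tiny open balls. Concretely, I would choose a countable disjoint collection $\{B(x_n,r_n)\}$ of open balls in $\Omega_0\setminus\Sigma_0$ with $|\Omega_0\setminus\bigsqcup_n B(x_n,r_n)|=0$, available by the Vitali covering theorem, and define
\[
 S^\eps(z) = T(x_n)+DT(x_n)(z-x_n) \qquad\text{for } z\in B(x_n,r_n).
\]
Setting $\Omega_0^\eps=\bigsqcup_n B(x_n,r_n)$ immediately yields property (i), and choosing $r_n$ small enough that $\tfrac12\|D^3\psi\|_{B(x_n,r_n)} r_n^2 < \eps\diam\Omega_1$ on every ball, the Taylor estimate \eqref{est:lot} directly gives property (ii).

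On each ball, $DT(x_n)=\Hess\psi(x_n)$ is symmetric positive definite with $\lambda_1(x_n)\cdots\lambda_d(x_n)=1$, so the restriction of $S^\eps$ is an affine, volume-preserving map sending $B(x_n,r_n)$ onto a translated ellipsoid $E_n$ whose principal semi-axes have lengths $r_n\lambda_j(x_n)$ along the eigendirections of $DT(x_n)$. After an orthogonal change of coordinates diagonalizing $DT(x_n)$, this restriction is precisely the optimal transport map between the co-axial pair (ball $B(x_n,r_n)$ and ellipsoid $E_n$) as in Section 2.4, hence an ellipsoidal Wasserstein droplet (up to a constant translation, i.e.\ a Galilean boost). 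Property (iii) then follows from (ii) via $\Phi = T\circ (S^\eps)^{-1}:\Omega_1^\eps\to\Omega_1$: once $S^\eps$ is known to be injective, $\Phi$ pushes the uniform measure on $\Omega_1^\eps$ to that on $\Omega_1$ and satisfies $|\Phi(y)-y| = |T(z)-S^\eps(z)|<\eps\diam\Omega_1$ for $y=S^\eps(z)$.

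The main obstacle is verifying global injectivity of the linear interpolants $S^\eps_t=(1-t)\id+tS^\eps$ on all of $\Omega_0^\eps$ for every $t\in[0,1]$. Within a single ball this is automatic, since $DS^\eps_t=(1-t)I+t\,DT(x_n)$ is symmetric positive definite. Across distinct balls the natural comparison is with the true Wasserstein interpolant $T_t=(1-t)\id+tT$, which is injective on $\Omega_0\setminus\Sigma_0$: for $t<1$ by the lower bound \eqref{e:Ttbound}, and at $t=1$ by injectivity of the Brenier map on the non-singular set. The Taylor bound gives $|S^\eps_t(z)-T_t(z)|\le\tfrac{t}{2}\|D^3\psi\|_{B(x_n,r_n)}r_n^2$, so the spray is a small perturbation of the Wasserstein flow. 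The delicacy is that adjacent open balls produced by Vitali may have touching closures, whose $T_t$-images are disjoint but without quantitative separation, so a naive perturbation argument might fail near $t=1$. I would address this by performing the Vitali construction iteratively in shrunken form: at each stage select disjoint balls with a quantitative gap whose size is chosen larger than the local Taylor remainder (controlled via the local eigenvalue ratios from \eqref{d:ulamolam}), then recursively apply Vitali to the residual set, so that the final countable family is not merely disjoint but separated enough to absorb the Taylor perturbation while still exhausting $\Omega_0\setminus\Sigma_0$ up to a null set. Verifying that all the smallness requirements (Taylor error, eigenvalue control, inter-ball separation) can be met simultaneously is the central technical content of the proposition.
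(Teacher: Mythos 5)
Your construction of the balls and of the affine approximants is the same as the paper's up to one crucial omission, and you have correctly located the hard point: injectivity of the interpolants $S^\eps_t$ across distinct balls, in particular near $t=1$ where the monotonicity bound \eqref{e:Ttbound} degenerates. But the fix you propose -- a recursive Vitali selection with source-side gaps exceeding the Taylor remainders -- does not work, for two reasons. First, a gap $\dist(B_i,B_j)=\delta>0$ in the source gives no uniform lower bound on $\dist(T_t(B_i),T_t(B_j))$ as $t\to1$: the only global estimate available from convexity of $\psi$ is $(1-t)\delta$, which vanishes at $t=1$, and $T$ itself admits no uniform lower Lipschitz bound between points in different balls (the segment joining them may cross the singular set or a region where $\Hess\psi$ has tiny eigenvalues, so $\dist(T(B_i),T(B_j))$ can be far smaller than $\delta$). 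The relevant separations $\inf_t\dist(T_t(\overline{B_i}),T_t(\overline{B_j}))$ are positive for each pair but not controllable in terms of quantities you can prescribe when selecting the balls. Second, even granting such control, once $B_i$ is chosen its Taylor error $\eta_i\sim\|D^3\psi\|_{B_i}r_i^2>0$ is fixed, so every later ball must avoid the set $T^{-1}\bigl(\{y:\dist(y,T(B_i))\le\eta_i\}\bigr)$, a permanently excluded annular region of positive Lebesgue measure; summing over $i$ leaves a positive-measure uncovered set, contradicting requirement (i). The recursion cannot close because covering those annuli would require placing balls inside the very exclusion zones.

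The paper's device is different: it dilates the \emph{target} by $1+\eps$, defining $S^\eps(x)=(1+\eps)T(x_i)+DT(x_i)(x-x_i)$ on $B_i$ and comparing with the expanded interpolation $T^\eps_t=(1-t)\id+t(1+\eps)T$. Since $(1+\eps)\psi$ is still convex, the images $T^\eps_t(B_i)$ are automatically disjoint, and the point of the dilation is that each affine image $S^\eps_t(B_k)$ is nested inside $T^\eps_t(B_k)$ with a margin of order $\eps t\ulam_k^2 r_k$ -- a margin that grows \emph{linearly in $t$}, exactly matching the growth of the Taylor remainder $\tfrac12(1+\eps)t\|D^3\psi\|_{B_k}r_k^2$, so the comparison survives up to and including $t=1$ once $r_k$ is chosen with $\|D^3\psi\|_{B_k}r_k<\tfrac\eps4\ulam_k^2$ as in \eqref{est:epsor}. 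Injectivity of $S^\eps_t$ then follows from disjointness of the containers, with no constraint whatsoever on the mutual positions of the Vitali balls, which is what allows the covering to exhaust $\Omega_0\setminus\Sigma_0$ up to a null set. The price is the extra displacement $\eps|T(x_i)|$ in estimate (ii), handled by centering the target as in \eqref{e:center}. Without this dilation (or an equivalent source of a $t$-proportional safety margin), your argument has a genuine gap.
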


The proof of this result will comprise the remainder of this subsection.
The strategy is as follows. 
Due to Alexandrov's theorem on the twice differentiability of convex functions,
the Brenier map $T=\nabla\psi$ is differentiable a.e. 
The set $\Omega_0^\eps$ will be chosen to be the union of a suitable Vitali covering 
of $\Omega_0$ a.e.\ by balls $B_i$, whose centers are points 
of differentiability of $T$.
On each ball $B_i$ we approximate $T$ by an 
affine map $\tT$ which takes the ball center $x_i$ to $(1+\eps)T(x_i)$,
taking the form
\begin{equation} \label{def:tildeT}
{ 
\tT(x) = (1+\eps) T(x_i) + DT(x_i)(x - x_i)\,,
}\qquad x\in B_i\,.
\end{equation}
The corresponding displacement interpolation map $\tT_t$
has three key properties: (i) it is locally affine
so maps balls to ellipsoids, (ii) it is volume-preserving, and 
(iii) spreading out the ball centers by the dilation factor $1+\eps$ 
ensures that the ball images remain non-overlapping,
because they are nested inside corresponding images under 
a dilated version of the displacement interpolation map $T_t$.


%

\subsubsection{Nesting by subgradient approximation}

%

It turns out to be quite convenient to construct
this dilated version based on the \emph{subgradient} $\D\psi$
of the Brenier potential $\psi$, 
to deal with the problem that 
the Brenier map $T$ may be discontinuous,
perhaps on a complicated set.

We recall that the subgradient of $\psi$ is a set-valued function defined by 
\begin{equation}
\D\psi(x) = \{z\in\R^d: \psi(x+h)\ge \psi(x)+\ip{z,h} \ \ \forall h\in\R^d\}.
\end{equation}
where $\ip{\cdot,\cdot}$ denotes the standard inner product on $\R^d$.
For each $x\in\R^d$, the set $\D\psi(x)$ is closed, convex, and nonempty.
In appendix~\ref{a:subg} we provide proofs of the few basic facts about
subgradients that we will use. 

According to Alexandrov's theorem (see \cite[Thm.~1.3]{Mignot} or \cite{EvansGariepy}),
for almost every $x_0\in\R^d$ 
the subgradient $\D\psi$ admits a local first-order expansion
\begin{equation}\label{e:alex1}
\D\psi(x)\subset T(x_0) + H (x-x_0) + B(0,\omega(x_0,r)) 
\qquad \forall x\in B(x_0,r)\,,
\end{equation}
where $H$ is a positive semidefinite matrix and 
$\omega(x_0,r)=o(r)$ as $r\to0$. 
Note we may assume $\omega(x_0,r)/r$ is decreasing in $r$.
The quantity $T(x_0)=\nabla\psi(x_0)$
provides the Brenier transport map at $x_0$, and 
we let $\Hess \psi(x_0)$ denote the matrix $H$.

Let us say $x_0$ is an \emph{Alexandrov point}
if \eqref{e:alex1} holds. 
Because $T=\nabla \psi$ pushes forward the
Lebesgue measure on $\Omega_0$ to that on $\Omega_1$, 
it follows that $\det\Hess\psi(x)=1$ for a.e. 
Alexandrov point $x$ in $\Omega_0$ 
(see~\cite[Thm.~4.4]{McCann97} or \cite[Thm.~4.8]{Villani03}).
Denoting by $\Omega_A$ the set of these points, we have 
$|\Omega_0\setminus\Omega_A|=0$, and 
\begin{equation}\label{e:lamprod}
\lambda_1(x) \cdots \lambda_d(x) =1 \qquad\mbox{for all $x\in\Omega_A$,}
\end{equation} 
where $\lambda_1(x), \dots, \lambda_d(x)$ denote
the eigenvalues of $\Hess \psi(x)$.

Our construction involves an expanded, subgradient extension
of the displacement interpolating map $T_t$. Namely, given $\eps>0$
and $t\in(0,1)$, we define
\begin{equation}
 \psi^\eps_t(x) = \frac12(1-t)|x|^2+t(1+\eps)\psi(x).
\end{equation}
The subgradient of this function is (see Prop.~\ref{p:subg}.ii),
\begin{equation}\label{e:psit}
 \D\psi^\eps_t(x) = (1-t)x+t(1+\eps)\D\psi(x).
\end{equation}
In case $\eps=0$, this map extends $T_t$ in the sense that 
$\D\psi_t^0(x)=\{T_t(x)\}$ for all $x\in\Omega_A$.
Further, the range of this subgradient is all of $\R^d$
(by Prop.~\ref{p:subg}.iii).
Just as in \eqref{e:Ttbound}, due to the monotonicity of the subgradient
(Prop.~\ref{p:subg}.i)
one has 
\begin{equation}\label{e:psitinv}
|z-\hat z| \ge (1-t)|x - \hat x| \quad\mbox{whenever}\quad
z\in \D\psi^\eps_t(x), \  
\hat z\in \D\psi^\eps_t(\hat x).  
\end{equation}
By consequence, the inverse $L^\eps_t := (\D\psi^\eps_t)\inv$ is a single-value
Lipschitz map with Lipschitz constant bounded by $(1-t)\inv$.

\begin{lemma}\label{p:alex}
Let $\eps>0$, 
and let $x_0\in\Omega_A$. 
Choose $r_0>0$ such that 
\begin{equation}\label{c:r0}
(1+\eps)\omega(x_0,r_0)< \frac12\eps\ulam(x_0) r_0\,.
\end{equation}
where $\ulam(x_0)\in(0,1)$ is the smallest eigenvalue of $H=\Hess\psi(x_0)$.
Then whenever $0<r<r_0$ and 
$0<t<1$, the ellipsoid defined by 
\[
E^\eps_t(x_0,r) 
=\big\{\, (1-t)x+ t(1+\eps)T(x_0) + tH(x-x_0)\,:\, x\in B(x_0,r)\big\}
\]
satisfies 
\begin{equation}\label{e:EBt}
E^\eps_t(x_0,r) \subset \D\psi^\eps_t( B(x_0,r)).
\end{equation}
\end{lemma}
We note that the fact that the term $tH(x-x_0)$ 
does not contain a factor $1+\eps$ is needed to guarantee the inclusion
in \eqref{e:EBt}.
\begin{proof}
Writing
\[
x^\eps_t = \nabla\psi^\eps_t(x_0) = (1-t)x_0 + t(1+\eps)T(x_0),
\qquad H_t = (1-t)I + tH,
\]
we have $E^\eps_t(x_0,r) = x^\eps_t+H_t B(0,r)$.
For all $x\in\R^d$ define 
\begin{equation}
f^\eps_t(x) = L^\eps_t(x^\eps_t+H_t x) -L^\eps_t(x^\eps_t)
\end{equation}
and note that $L^\eps_t(x^\eps_t)=x_0$.
Then $z=x^\eps_t+H_tx\in\D\psi^\eps_t(y)$ where 
$y=x_0+f^\eps_t(x)$.

We claim that $f^\eps_t(x)\in B(0,r)$ whenever 
$0<r\le r_0$, $t\in(0,1)$, and $x\in B(0,r)$.
The proof shall be by a continuation argument in $r$ using the fact
that $L^\eps_t$ is Lipschitz with Lipschitz constant $(1-t)\inv$.
The continuation is based on the following.
\begin{sublemma}\label{l:fepst}
For each $t\in(0,1)$ there exists $\theta_t<1$ such that if
$|x|\le r_0$ and we assume $|f^\eps_t(x)|\le r_0$, then
\[
|f^\eps_t(x)|\le \theta_t|x|.
\]
\end{sublemma}
\begin{proof}
Under the stated assumption, we have $x^\eps_t+H_t x\in\D\psi^\eps_t(x_0+y)$
where $y=f^\eps_t(x)$. Due to \eqref{e:psit} and \eqref{e:alex1}, 
there exists $w\in B(0,\omega(x_0,|x|))$ such that 
\[
x^\eps_t+H_tx = (1-t)(x_0+y)+t(1+\eps)(T(x_0)+H y + w),
\]
whence
\[
y = (H_t+ \eps t H)\inv(H_t x-t(1+\eps)w).
\]
By diagonalizing $H$ and noting 
$\lambda_t = 1-t + t\ulam$ is the smallest eigenvalue of $H_t$, 
one finds
\[
|(H_t+ \eps t H)\inv 
H_t x| \le \frac{\lambda_t}{\lambda_t+\eps t \ulam} |x|,
\qquad
|(H_t+ \eps t H)\inv w| \le \frac{\omega(x_0,|x|)}{\lambda_t+\eps t\ulam}.
\]
Using the fact that
$\omega(x_0,|x|)\le \frac12\eps\ulam |x|$, the result of the 
sublemma follows by taking
\[
\theta_t = \frac{\lambda_t+\frac12\eps t\ulam}{\lambda_t+\eps t\ulam}<1.
\]
\end{proof}
Now we finish the proof of Lemma~\ref{p:alex}. Fix $t\in(0,1)$ and let
\begin{equation}\label{d:rt}
r_t = \sup\big\{r\in[0,r_0] \,:\, \mbox{ 
$|x|\le r$ \quad implies \quad $|f^\eps_t(x)|\le\theta_t|x|$} \,\}
\end{equation}
(without making the extra assumption in the sublemma). 
The set in \eqref{d:rt} is closed and $r_t>0$,
because $f^\eps_t$ is continuous and $f^\eps_t(0)=0$.
Note that $|x|\le r_t$ implies 
$|f^\eps_t(x)|\le \theta_t r_t<r_t$.
Now it follows $r_t=r_0$, because if $r_t<r_0$, 
then it follows from continuity 
that for some $r\in(r_t,r_0]$, $|x|\le r$ implies 
$|f^\eps_t(x)|\le r_t<r_0$,
whence $|f^\eps_t(x)|\le\theta_t|x|$ by the sublemma, 
contradicting the definition of $r_t$.
\end{proof}

\subsubsection{Proof of Proposition~\ref{p:ETspray}}
We suppose $0<\eps<1$. The first step in the proof is to produce a suitable Vitali covering 
of $\Omega_0$, up to a null set,  by a countable disjoint union of balls. 
By a simple translation of  target and source 
so that the origin is the midpoint of two points in $\bar\Omega_1$ 
separated by distance $\diam\Omega_1$, because the distance
from any point in $\Omega_1$ to each of the two points is also no more than
$\diam\Omega_1$ we may assume that
\begin{equation}\label{e:center}
\sup_{x\in \Omega_0}  |T(x)|
<\diam \Omega_1\,. 
\end{equation}

We may choose $\oor(x,\eps)>0$ for each $x\in\Omega_A$ and $\eps>0$
such that whenever $0<r<\oor(x,\eps)$ we have (see \eqref{c:r0})
\begin{equation}\label{c:oor}
(1+\eps)\omega(x,r)<\frac12\eps\ulam(x)r
,\qquad   \frac{\olam(x)}{\ulam(x)} r < \eps\diam\Omega_1,
\end{equation}
where $\olam(x)$ is the largest eigenvalue of $\Hess(x)$.
(The second condition on $r$ will be used in the next subsection.)
Then $|\Omega_0\setminus\Omega_A|=0$, and 
the family of balls 
\[
\{ B(x, r) : x \in \Omega_A ,\ 0<r < \oor(x,\eps) \}
\]
forms a Vitali cover of $\Omega_A$. Therefore, by Vitali's covering theorem \cite[Theorem III.12.3]{DSbook},
there is a countable family of mutually disjoint balls $B(x_i,r_i)$,  with $x_i \in \Omega_A$ 
and $0< r_i < \oor(x_i,\eps)$, such that
\[
| \Omega_A \backslash \cup_{i \in \N} B(x_i,r_i) | = 0\ .
\] 
We let
\begin{equation}\label{d:Om0eps}
\Omega_0^\eps = \bigsqcup_{i \in \N} B_i \,,
\qquad B_i=B(x_i,r_i).
\end{equation}

Define the map $\tT$ by \eqref{def:tildeT}.
To show $\tT$ is an ellipsoidal transport spray on $\Omega_0^\eps$,
we first prove that the linear interpolants defined by 
\[
\tT_t(z) = (1-t)z+t \tT(z), \qquad z\in\Omega_0^\eps,
\]
remain injections for each $t\in[0,1)$. Clearly the restriction
to each $B_i$ is an injection. 
But by invoking Lemma~\ref{p:alex} with $H=\Hess\psi(x_i)$, 
we conclude that the image of $B_i$ under $\tT_t$ satisfies
\begin{equation}
 \tT_t(B_i) = E_t^\eps(x_i,r_i) \subset \D\psi_t^\eps(B_i)\,.
\end{equation}
Recalling that the inverse $(\D\psi^\eps_t)\inv$ is a single-value
Lipschitz map by \eqref{e:psitinv}, this implies that 
the images $\tT_t(B_i)$ are pairwise disjoint.

Now, for $t=1$ we necessarily have $\tT$ is injective, for 
if not then for some $i\ne j$,
the open set $\tT_t(B_i)\cap\tT_t(B_j)$ is nonempty for $t=1$
and hence for $t$ near 1, contradiction.
This proves that $\tT$ is an ellipsoidal transport spray on 
the set $\Omega_0^\eps$ in \eqref{d:Om0eps}, so that property (i) holds.

Next we prove property (ii). Using \eqref{c:oor},
for each $x\in B_i$ we have, since $T(x)\in\D\psi(x)$,
\begin{align}
|T(x)-S^\eps(x)| &\le |T(x)-T(x_i)-DT(x_i)(x-x_i)|+\eps|T(x_i)| 
\nonumber \\& \le 
\omega(x_i,r_i) +\eps|T(x_i)|
\nonumber \\& \le 
\frac12 \eps\ulam(x_i)  r_i +\eps\diam\Omega_1 
\le \frac32\eps\diam\Omega_1.
\label{e:taylor}
\end{align}
This shows (ii) after replacing $\eps$ by $\eps/2$. 
For part (iii), we note that the set 
$\Omega_0^\eps = (S^\eps)\inv(\Omega_1^\eps)$ 
has full measure in $\Omega_0$, and the map $T$
pushes forward Lebesgue measure on $\Omega_0$
to Lebesgue measure on $\Omega_1$.
It follows that the map
$T\circ(S^\eps)\inv\colon\Omega_1^\eps\to\Omega_1$ 
pushes forward uniform measure to uniform measure
and satisfies
\[
\sup_{x\in \Omega_1^\eps} |T\circ (S^\eps)\inv(x)-x| < 
\eps \diam\Omega_1.
\]
The result claimed in part (iii) follows, due to \eqref{d:dinfty}.
This finishes the proof of Proposition~\ref{p:ETspray}.

\subsection{Action estimate for Euler spray}

Each of the ellipsoidal Wasserstein droplets that make up the 
ellipsoidal transport spray $S^\eps$ is associated with a boosted ellipsoidal
Euler droplet nested inside, due to the nesting property in Proposition~\ref{p:nest}. 
The disjoint superposition of these Euler droplets
make up an Euler spray that deforms $\Omega_0^\eps$ to the same 
set $\Omega_1^\eps$. 

In order to complete the proof of Theorem~\ref{th2}, it remains to 
bound the action of this Euler spray in terms of the Wasserstein
distance between the uniform measures on $\Omega_0$ and $\Omega_1$.
Toward this goal, we first note that 
because the maps $T$ and $S^\eps$ are volume-preserving,
due to the estimate in part (ii) of Proposition~\ref{p:ETspray}
and \eqref{d:dinfty} we have
\[
d_W(T(B_i),S^\eps(B_i))^2 \le 
\left( \eps K_1\right)^2 |B_i| \,,
\qquad K_1=\diam\Omega_1.
\]
Now by the triangle inequality, 
\begin{align}
 d_W(B_i,S^\eps(B_i))^2  
&\le \left(d_W(B_i,T(B_i)) + \eps K_1|B_i|^{1/2}\right)^2
\nonumber\\&
\le d_W(B_i,T(B_i))^2 (1+\eps) + (\eps+\eps^2) K_1^2 |B_i|
\end{align}

Recall that by inequality \eqref{e:Aa-action} of Lemma~\ref{l:Eactionest},
the action  of the $i$-th ellipsoidal Euler droplet, 
denoted by ${\mathcal A}_i$, satisfies
\begin{align}
{\mathcal A}_i &\le d_W(B_i,S^\eps(B_i))^2  
+ \frac{\olam(x_i)^4}{\ulam(x_i)^2}r_i^{2}|B_i| 
\nonumber \\&
\le d_W(B_i,T(B_i))^2(1+\eps) +3\eps K_1^2|B_i| \,,
\end{align}
where we make use of the second constraint in \eqref{c:oor}.

By summing over all $i$, we obtain the required bound,
\[
{\mathcal A}^\eps =\sum_i {\mathcal A}_i \le d_W(\one_{\Omega_0},\one_{\Omega_1})^2
+ K\eps
\]
where
\[
K = d_W(\one_{\Omega_0},\one_{\Omega_1})^2
+ 4 |\Omega_0|(\diam\Omega_1)^2.
\]
This concludes the proof of Theorem~\ref{th2}.

\section{Shape distance equals Wasserstein distance}\label{s:dsdw}

Our main goal in this section 
is to prove Theorem~\ref{t:complete}, which establishes the existence
of paths of shape densities (as countable concatenations of Euler sprays)
that exactly connect any two compactly supported measures having densities 
with values in $[0,1]$ and have action as close as desired to the 
Wasserstein distance squared between the measures.
Theorem~\ref{th1} follows as an immediate corollary, showing 
that shape distance between 
arbitrary bounded measurable sets with positive, equal volume
is the Wasserstein distance between the corresponding characteristic functions.

Theorem~\ref{t:complete} will be deduced from Theorem~\ref{th2} by essentially `soft' 
arguments. Theorem~\ref{th2} shows that the relaxation 
 of shape distance, in the sense of lower-semicontinuous envelope with respect
 to the topology of weak-$\star$ convergence of characteristic functions,
 is Wasserstein distance. 
Essentially, here we use this result to compute the completion of the shape distance
in the space of bounded measurable sets.

\begin{lemma}\label{l:approxrho} 
Let $\rho\colon\R^d\to [0,1]$ be a measurable function of compact support.
Then for any $\eps>0$
there is an open set $\Omega$ such that its volume is the total mass of $\rho$
and the $L^\infty$ transport distance from $\rho$ to its characteristic function 
is less than $\eps$: 
\[
|\Omega| = \int_{\R^d} \rho \, dx \quad\mbox{and}\quad
d_\infty(\rho,\one_{\Omega})<\eps\,.
\]
\end{lemma}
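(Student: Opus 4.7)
The plan is to build $\Omega$ by partitioning space into small cubes and, in each cube, choosing an open subset of the correct volume; the $L^\infty$ transport cost will then be controlled by the cube diameter.

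First I would fix $h>0$ with $h\sqrt d<\eps$ and partition $\R^d$ into a lattice of pairwise disjoint half-open cubes $\{Q_j\}_{j\in\N}$ of side $h$. Since $\rho$ has compact support, only finitely many cubes carry positive mass, but it is harmless to keep the full lattice. For each $j$, set
\[
m_j=\int_{Q_j}\rho\,dx\ \in[0,h^d],
\]
the inequality being where the hypothesis $\rho\le 1$ is used. If $m_j=0$, set $\Omega_j=\emptyset$; otherwise, let $\Omega_j$ be any open subset of $Q_j$ with $|\Omega_j|=m_j$ (for instance an open cube concentric with $Q_j$ of side $m_j^{1/d}\le h$). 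Define
\[
\Omega=\bigsqcup_{j\in\N}\Omega_j.
\]
This is open, and the total volume satisfies $|\Omega|=\sum_j m_j=\int_{\R^d}\rho\,dx$ by countable additivity.

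Next I would estimate the $L^\infty$ transport distance. On each cube with $m_j>0$, both $\rho\one_{Q_j}\,dx$ and $\one_{\Omega_j}\,dx$ are nonnegative Borel measures on $Q_j$ with equal mass $m_j$, so a transport map $T_j\colon Q_j\to Q_j$ pushing the first to the second exists (e.g.\ the Brenier map of the restricted measures). Since both measures are supported inside $Q_j$, one has
\[
\|T_j-\id\|_{L^\infty(\rho\one_{Q_j}dx)}\le\diam Q_j=h\sqrt d.
\]
Gluing the $T_j$ into a single Borel map $T\colon\R^d\to\R^d$ (defined arbitrarily on the null set where $\rho=0$) yields a map pushing $\rho\,dx$ to $\one_\Omega\,dx$ with $\|T-\id\|_{L^\infty(\rho\,dx)}\le h\sqrt d<\eps$. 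By the definition of $d_\infty$ in \eqref{d:dinfty}, we conclude $d_\infty(\rho,\one_\Omega)<\eps$.

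The only step that requires care is the cube-by-cube transport construction: one must ensure that a transport map between two Borel measures of equal mass supported in a common compact set actually exists. This is standard (Brenier's theorem, or simply the classical existence of an optimal coupling together with disintegration), but it is worth stating explicitly. Everything else is elementary: the hypothesis $\rho\le 1$ is precisely what allows the volume $m_j$ to fit inside the cube $Q_j$, and compact support ensures the construction is nontrivial on only finitely many cubes.
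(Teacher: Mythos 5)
Your proof is correct and follows essentially the same route as the paper's: both decompose space into a grid of cells of diameter less than $\eps$ and replace $\rho$ on each cell by the characteristic function of an open subset of matching volume (which is possible precisely because $\rho\le1$). The only difference is the final step: the paper appeals to the equivalence of weak-$\star$ convergence and transport-distance convergence on a fixed compact set, whereas you bound $d_\infty$ directly by gluing cell-by-cell transport maps, each moving mass by at most the cell diameter --- which is, if anything, the cleaner way to obtain the $L^\infty$ (as opposed to $L^2$) estimate.
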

\begin{proof}  We recall that weak-$\star$ convergence of probability measures  supported
in a fixed compact set is equivalent to convergence in (either $L^2$ or $L^\infty$) 
Wasserstein distance. Given $k\in\N$,
cover the support of $\rho$ a.e.\ by a grid of disjoint open rectangles
of diameter less than $\eps_k=1/k$. For each rectangle $R$ in the grid,
shrink the rectangle homothetically from any point inside to obtain a sub-rectangle
$\hat R\subset R$ such that $|\hat R| = \int_R \rho\,dx.$ Let $\Omega_k$ be the 
disjoint union of the non-empty rectangles $\hat R$ so obtained.
Then the sequence of characteristic functions $\one_{\Omega_k}$ evidently
converges weak-$\star$ to $\rho$ in the space of fixed-mass measures on a fixed 
compact set:  for any continuous test function $f$ on $\R^d$,
as $k\to\infty$ we have
\[
\int_{\Omega_k} f(x)\,dx \to \int_{\R^d} f(x)\rho(x)\,dx\, .
\]
Choosing $\Omega=\Omega_k$ 
for some sufficiently large $k$ yields the desired result.
\end{proof}

\begin{proof}[Proof of Theorem~\ref{t:complete} part (a)]
 Let $\rho_0$, $\rho_1$ have the properties stated, and
suppose $D:=d_W(\rho_0,\rho_1)>0$. (The other case is trivial.)
Let $\eps>0$.
By Lemma~\ref{l:approxrho} we may choose 
open sets $\Omega_0$ and $\hat\Omega_1$ whose volume
is $\int_{\R^d}\rho_0$ and such that
\begin{equation}\label{e5:ax1}
d_\infty(\rho_0,\one_{\Omega_0}) + d_\infty(\rho_1,\one_{\hat\Omega_1})<\frac\eps2,
\qquad
d_W(\Omega_0,\hat\Omega_1)^2 \le d_W(\rho_0,\rho_1)^2 + \frac\eps2.
\end{equation}
Then we can apply Theorem~\ref{th2} to find an Euler spray that connects 
$\Omega_0$ to a set $\hat\Omega_1^\eps=:\Omega_1$ close to $\hat\Omega_1$ with the properties
\begin{equation}\label{e5:ax2}
d_\infty(\Omega_1,\hat\Omega_1) <\frac\eps3, \qquad
\calA^\eps \le  d_W(\Omega_0,\hat\Omega_1)^2 + \frac\eps3,
\end{equation}
where $\calA^\eps$ is the action of this Euler spray.
By combining the inequalities in \eqref{e5:ax1} and \eqref{e5:ax2} 
we find that the sets $\Omega_0$, $\Omega_1$ have the properties required. 
\end{proof}

Before we establish part (b), we separately discuss the concatenation
of transport paths.  Let $\rho^k=(\rho^k_t)_{t\in[0,1]}$ be a path 
of shape densities for each $k=1,2,\dots,K$, with associated
transport velocity field $v^k\in L^2(\rho^k\,dx\,dt)$ and action 
\[
\calA_k = \int_0^1\int_{\R^d}\rho^k_t(x) |v^k(x,t)|^2\,dx\,dt.
\]
We say this set of paths forms a \textit{chain} if  $\rho^k_1=\rho^{k+1}_0$
for $k=1,\ldots,K-1$.  Given such a chain, and numbers $\tau_k>0$ 
such that $\sum_{k=1}^K \tau_k=1$, we define the \textit{concatenation of 
the chain of paths $\rho^k$ compressed by $\tau_k$} to be 
the path $\rho = (\rho_t)_{t\in[0,1]}$  given by 
\begin{equation}\label{d:concat}
\rho_t = \rho^k_s \quad\mbox{ for } \ \ t = \tau_k s+ \sum_{j<k}\tau_k\,, \quad s\in[0,1].
\end{equation}
The transport velocity associated with the concatenation is 
\begin{equation}\label{d:concatv}
v(\cdot,t) = \tau_k\inv v^k(\cdot,s) \quad\mbox{ for } \ \ t = \tau_k s+ \sum_{j<k}\tau_k\,, \quad s\in[0,1],
\end{equation}
and the action is
\begin{equation}\label{d:concatA}
\calA = \int_0^1\int_{\R^d}\rho_t|v|^2\,dx\,dt = 
\sum_{k=1}^K \tau_k\inv \int_0^1\int_{\R^d} \rho^k_s(x)|v(x,s)|^2\,dx\,ds 
= \sum_{k=1}^K \tau_k\inv \calA_k.
\end{equation}

\begin{remark} We mention here how the triangle inequality for the shape distance
defined in \eqref{d:ds} follows directly from this concatenation procedure.
Given the chain $\rho^k$ as above with actions $\calA_k$,
let $\delta_k = \sqrt{\calA_k}$ and  set
\[
\tau_k = \frac{\delta_k}{\sum_j \delta_j}, \quad k=1,\ldots,K.
\]
Let $\calA$ be the action of the 
concatenation of paths $\rho^k$ compressed by $\tau_k$,
and let $\delta=\sqrt{\calA}$. Then
\[
\calA = \delta^2 = \sum_k \tau_k\inv \delta_k^2 = \left( \sum_k \delta_k\right)^2\,.
\]
From this the triangle inequality follows.
\end{remark}

\begin{proof}[Proof of Theorem~\ref{t:complete} part (b)]
Next we establish part (b). The idea is to construct a path of shape densities
$\rho=(\rho_t)_{t\in[0,1]}$ connecting $\rho_0$ to $\rho_1$
by concatenating the Euler spray from part (a) together 
with two paths of small action that themselves are concatenated chains of Euler sprays 
that respectively 
connect ${\Omega_0}$ to sets that approximate $\rho_0$, 
and connect $\Omega_1$ to sets that approximate $\rho_1$.

Let $\eps>0$, and let $\rho^\eps$ be a shape density  
determined by an Euler spray as from part (a) that connects 
bounded open sets $\Omega_0$ and $\Omega_1$ of volume $\int_{\R^d} \rho_0$, 
but with the (perhaps tighter) conditions
\[
d_W(\one_{\Omega_0},\rho_0) + d_W(\one_{\Omega_1},\rho_1)<\frac14 \eps 2\inv,
\qquad
\calA^\eps < d_W(\rho_0,\rho_1)^2 + \eps\,,
\]
where $\calA^\eps$ is the action of this spray.

Next we construct a chain of Euler sprays with shape densities $\rho^k$,  
$k=1,2,\ldots$, with action $\calA_k$ that connect $\Omega_1$ with a chain of sets $\Omega_k$ such
that $\one_{\Omega_k}\wkto \rho_1$ as $k\to\infty$ and 
\begin{equation}\label{e:pathkgoals}
d_W(\one_{\Omega_k},\rho_1)<\frac14\eps 2^{-k}, \qquad 
\calA_k < (\eps 2^{-k})^2. 
\end{equation}
We proceed by recursion by applying Theorem~\ref{th2} like in the proof of part (a).
Given $k\ge1$, suppose $\Omega_k$ is defined and $\rho^j$ are defined for $j<k$.
Using Lemma~\ref{l:approxrho} we can find a bounded open set 
$\hat\Omega_{k+1}$  such that
\[
|\hat\Omega_{k+1}| = \int_{\R^d}\rho_0 
\qquad\mbox{and}\qquad
d_W(\one_{\hat\Omega_{k+1}},\rho_1)< \frac18\eps 2^{-k-1} \,.
\]
Then by invoking Theorem~\ref{th2} and the triangle inequality for $d_W$,
we obtain an Euler spray with action $\calA_k$ that connects
$\Omega_k$ to a bounded open set $\Omega_{k+1}$, such that 
\[
d_W(\Omega_{k+1},\hat\Omega_{k+1})< \frac18\eps 2^{-k-1}
\quad\mbox{and}\quad
\calA_k < d_W(\Omega_k,\hat\Omega_{k+1})^2 + \frac12 (\eps 2^{-k})^2
< (\eps 2^{-k})^2 \ .
\]
We let $\rho^k=(\rho^k_t)_{t\in[0,1]}$ be the path of shape densities for this spray, so that
$\rho^k_0=\one_{\Omega_k}$ and $\rho^k_1=\one_{\Omega_{k+1}}$. 
This completes the construction of the chain of paths $\rho^k$ satisfying
\eqref{e:pathkgoals}.

It is straightforward to see that $d_W(\rho^k_t,\rho_1)\to0$ as $k\to \infty$
uniformly for $t\in[0,1]$. Now we let $\rho^+=(\rho^+_t)_{t\in[0,1]}$
be the countable concatenation of this chain of paths $\rho^k$ compressed by
$\tau_k=2^{-k}$ according to the formulas \eqref{d:concat}--\eqref{d:concatA} above
taken with $K\to\infty$, and with $\rho^+_1=\rho_1$.   
The action $\calA^+$ of this concatenation then satisfies
\begin{equation}\label{e:act+}
\calA^+ = \sum_{k=1}^\infty 2^k \calA_k < \eps^2. 
\end{equation}

In exactly analogous fashion we can construct a countable concatenation $\hat\rho^-$
of a chain of paths coming from Euler sprays, that connects 
$\hat\rho^-_0=\one_{\Omega_0}$ with $\hat\rho^-_1 = \rho_0$ 
and having action $\calA^-<\eps^2$.   Then define $\rho^-$ be the
\textit{reversal} of $\hat\rho^-$, given by 
\[
\rho^-_t = \hat\rho^-_{1-t} \,.
\]
This path $\rho^-$ has the same action $\calA^-$.

Finally,  define the path $\rho$ by concatenating $\rho^-$, $\rho^\eps$, $\rho^+$
compressed by $\eps$, $1-2\eps$ and $\eps$ respectively.
This path satisfies the desired endpoint conditions and has action $\calA$ that
satisfies
\[
\calA = \eps\inv \calA^- + (1-2\eps)\inv \calA^\eps+ \eps\inv\calA^+
< d_W(\rho_0,\rho_1)^2 + K\eps,
\]
for some constant $K$ independent of $\eps$ small. The result 
of part (b) follows.
\end{proof}

\begin{remark}
Our construction here of a sequence of action-infimizing paths involves
connecting geodesics given by Euler sprays only for simplicity.
A more general approach to constructing non-geodesic near-optimal 
incompressible paths can be taken that exploits the convexity of density 
along Wasserstein transport paths.
Such an approach was implemented in an earlier preprint version of this 
article \cite{DLPv1}.
\end{remark}

\subsection{Rigidity of minimizing incompressible paths}
\label{ss:rigid}

The result of Theorem~\ref{th4}, providing a sharp criterion for the existence
of a minimizer for the shape distance in \eqref{d:ds},  follows by combining
the uniqueness property of Wasserstein geodesics with the result of
Theorem~\ref{th1}.
  
\begin{proof}[Proof of Theorem~\ref{th4}] 
Let $\rho=(\rho_t)_{t\in[0,1]}$ be the density along the 
Wasserstein geodesic path that connects $\one_{\Omega_0}$ and $\one_{\Omega_1}$,
where $\Omega_0$, $\Omega_1$ are bounded open sets in $\R^d$ with equal volume. 
Clearly, if $\rho$ 
is a characteristic function, then the Wasserstein geodesic provides a
minimizing path for shape distance.  On the other hand, if a minimizer for
\eqref{d:ds} exists, it must have constant speed by a standard
reparametrization argument.  Then by Theorem~\ref{th1} it provides a
constant-speed minimizing path for Wasserstein distance as well, hence
corresponds to the unique Wasserstein geodesic.  Thus the Wasserstein geodesic
density $\rho$ is a characteristic function.  \end{proof}

A consequence of Theorem~\ref{th1} is that 
existence of a minimizer among incompressible transport paths in \eqref{d:ds}
imposes a rigid structure on the optimal transport map $T$.
To discuss this it is convenient to invoke the 
regularity theory of Caffarelli \cite{caff91},  
Figalli \cite{Fig10} and Figalli \& Kim~\cite{FK2010}.
These authors have shown 
(see Theorem 3.4 in \cite{figall_dephil_BAMS} and also \cite{dPF2015}) 
that, due to the fact that the characteristic
functions are smooth inside $\Omega_0$ and $\Omega_1$,
the  optimal transportation potential $\psi$ is smooth 
away from a set of measure zero. 
More precisely, there exist relatively closed sets of measure zero, 
$\Sigma_i \subset \Omega_i$ for $i=0,1$ such that 
$T: \Omega_0 \backslash \Sigma_0 \to  \Omega_1 \backslash \Sigma_1$ 
is a smooth diffeomorphism between two open sets.

\begin{remark} In a previous draft of this paper, this regularity theory 
was used to prove Theorem~\ref{th2} through a Vitali covering argument.
The present approach to the proof in section~\ref{s:spray}
exploits the simpler property that the subgradient maps
$\D\psi_t$ have single-valued inverses.
\end{remark}

Along the particle paths of displacement interpolation starting
from any  $z\in\Omega_0\setminus\Sigma_0$, the mass density satisfies
\begin{equation}
\rho(T_t(z),t)^{-1} = \det \frac{\D T_t}{\D z} = \det ( (1-t)I+ t\Hess\psi(z) )
= \prod_{j=1}^d (1-t+t\lambda_j(z)) \,.
\label{rhoinv}\end{equation}

We now study the rigidity of incompressible transport paths by 
examining a simple proof that the density $\rho$ is log-convex 
along these paths. A related fact stated in \cite[Lemma 2.1]{McCann97} 
implies the (stronger) property that $\rho^{-1/d}$ is concave 
along particle paths and is connected to a well-known proof of 
the Brunn-Minkowski inequality by Hadwiger and Ohmann.

%
\begin{lemma} \label{lem:dens}
Along the particle paths $t\mapsto T_t(z)$ of displacement interpolation
between the measures $\mu_0$ and $\mu_1$ with respective densities
$\one_{\Omega_0}$ and $\one_{\Omega_1}$ as above, 
the density is log-convex, that is $t \mapsto \log\rho(T_t(z),t)$ is convex,
for $z\in\Omega_0\setminus\Sigma_0$.  
Moreover, this function is constant if and only if $\Hess\psi(z)=I$.

\end{lemma}
\begin{proof}
We compute 
\begin{equation}\label{eq:d2logrho}
\frac{d^2}{dt^2} \log \rho = - \frac{d}{dt} \sum_{j=1}^d \frac{\lambda_j-1}{1-t+t\lambda_j} = 
\sum_{j=1}^d \left(\frac{\lambda_j-1}{1-t+t\lambda_j} \right)^2 \ge0,
\end{equation}
and this vanishes if and only if $\lambda_j=1$ for all $j$.
\end{proof}

\begin{corollary}\label{cor:rigid}
The Wasserstein geodesic density $\rho$ is a characteristic function
if and only if the displacement interpolant is piecewise a rigid translation:
\[
T_t(z)= z+ tb(z) \,,
\]
where $b(\cdot)$ is constant on each component of the open set
$\Omega_0\setminus\Sigma_0$.
\end{corollary}

%
%
In case the result of the Corollary applies,
$\Omega_1=T(\Omega_0)$ represents some kind
of decomposition of $\Omega_0$ by fracturing into pieces that can separate
without overlapping. 


\begin{remark}\label{r:rho3} 
In the case of one dimension ($d=1$)
it is always the case that the Wasserstein geodesic density
$\rho(T_t(z),t)\equiv 1$ for all $z$ in the non-singular set.
This is so because the diffeomorphism
$T: \Omega_0 \backslash \Sigma_0 \to  \Omega_1 \backslash \Sigma_1$ 
must always be a rigid translation on each component,
as it pushes forward Lebesgue measure to Lebesgue measure.

As a nontrivial example, 
let $\mathcal C\subset [0,1]$ be the standard Cantor set, 
and let $\Omega_0=(0,1)$.
Define the Brenier map $T(x)=x+c(x)$ with $c$ given by the Cantor function, 
increasing and continuous on $[0,1]$ with $c(0)=0$, $c(1)=1$ and 
$c'=0$ on $(0,1)\setminus\mathcal C$. 
Then $T(\Omega_0) = (0,2)$, but the pushforward of uniform measure on 
$\Omega_0$ is the uniform measure on the set
$\Omega_1 = T(\Omega_0\setminus \mathcal C)$, 
which has countably many components, and total length $|\Omega_1|=1$.
\end{remark}

\begin{remark} The proof of Lemma~\ref{lem:dens} can be easily modified
to show $\rho^{-q}$ is concave along particle paths
for any $q\in(0,1/d]$.  E.g., for $g(t)=\rho(T_t(z),t)^{-1/d}$, one has 
\begin{equation}\label{eq:d2logg}
\frac{g''}g = \left(\frac1d\sum_{j=1}^d \frac{\lambda_j-1}{1-t+t\lambda_j} \right)^2 -
\frac1d \sum_{j=1}^d \left(\frac{\lambda_j-1}{1-t+t\lambda_j} \right)^2  \le 0\,,
\end{equation}
due to the Cauchy-Schwartz (or Jensen's) inequality. 
\end{remark}


\section{Displacement interpolants as weak limits}\label{s:weaklimits}

\subsection{Proof of Theorem~\ref{th3}.}
Next we supply the proof of Theorem~\ref{th3}. 
We will accomplish this in two steps, first dealing with the case
that the endpoint densities $\rho_0, \rho_1$ are characteristic functions
of bounded open sets. To extend this result to the general case,
we will make use of fundamental results on stability of optimal transport plans
from \cite{AGS} and \cite{Villani09}.

\begin{proposition}\label{p:wkconv} 
Let $\Omega_0$, $\Omega_1$ be bounded open sets of equal volume. 
Let $(\rho,v)$ be the density and transport velocity determined by 
the unique Wasserstein geodesic (displacement interpolant)
that connects the uniform measures on $\Omega_0$ and $\Omega_1$
as described in section~2.

Then, as $\eps\to0$, the weak solutions $(\rho^\eps,v^\eps,p^\eps)$
associated to the Euler sprays of Theorem~\ref{th2} 
converge to $(\rho,v,0)$, and $(\rho,v)$ is a weak solution 
to the pressureless Euler system
\eqref{e:euler1p}--\eqref{e:euler2p}.
The convergence holds in the following sense: $p^\eps\to0$ uniformly,
and 
\begin{equation}\label{e:rvwk6}
\rho^\eps\wksto \rho,\qquad
\rho^\eps v^\eps\wksto \rho v, \qquad
\rho^\eps v^\eps\otimes v^\eps \wksto \rho v\otimes v ,
\end{equation}
weak-$\star$ in $L^\infty$ on $\R^d\times[0,1]$.
\end{proposition}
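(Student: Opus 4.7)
The plan is to combine the explicit microdroplet structure of the Euler sprays from Theorem~\ref{th2} with the uniform pressure and velocity bounds at the end of Section~\ref{s:geo}. First I note that for each Vitali ball $B_i = B(x_i,r_i)$ used in the construction, the eigenvalues of $\Hess\psi(x_i)$ have product one, so $\olam_i\ge 1\ge\ulam_i$. The second inequality in \eqref{est:epsor} then yields $\olam_i^4 r_i^2/\ulam_i^2 < \eps K_1^2$ with $K_1 = \diam\Omega_1$; in particular $r_i\le\sqrt\eps K_1$ and $\olam_i r_i\le\sqrt\eps K_1$. From the pressure bound \eqref{e:pbound}, inside every droplet $0\le p^\eps\le\eps K_1^2 d$, and $p^\eps = 0$ elsewhere, so $p^\eps\to 0$ uniformly.

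For $\rho^\eps\wksto\rho$, the $i$-th Euler droplet $\Omega_t^{(i),\eps}$ has volume $|B_i|$ (by incompressibility) and, being nested inside the displacement-interpolated ellipsoid $T_t^\eps(B_i)$ by Proposition~\ref{p:nest}, has diameter at most $2(1+\eps)\olam_i r_i\le 2\sqrt\eps(1+\eps)K_1$. Given a continuous test function $\varphi$ with modulus of continuity $\omega_\varphi$, I decompose
\[
\int_0^1\!\!\int \rho^\eps\varphi\,dx\,dt \;=\; \sum_i \int_0^1\!\!\int_{\Omega_t^{(i),\eps}}\!\varphi\,dx\,dt
\]
and approximate each inner integral by $|B_i|\,\varphi(T_t(x_i),t)$, with total error controlled by $|\Omega_0|\,\omega_\varphi(O(\sqrt\eps))\to 0$. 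A matching Riemann-sum estimate (using the uniform mesh bound $r_i\le\sqrt\eps K_1$ and $\sum_i|B_i|=|\Omega_0|$) replaces $\sum_i |B_i|\varphi(T_t(x_i),t)$ by $\int_{\Omega_0}\varphi(T_t(z),t)\,dz = \int\rho(x,t)\varphi(x,t)\,dx$ after the change of variables $x = T_t(z)$. Extension from $C_c$ test functions to $L^1$ uses the uniform bound $0\le\rho^\eps\le 1$ together with the fact that all densities have support in a fixed compact set.

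The main obstacle is passing to the limit in the nonlinearity $\rho^\eps v^\eps\otimes v^\eps$, which in general fails for weakly-convergent oscillating velocity fields. The rescue is the velocity estimate \eqref{e:vboundboost}: inside the $i$-th droplet, $|v^\eps - b^{(i)}|^2\le\eps K_1^2 d$, where $b^{(i)}$ is the constant-in-$t$ mean velocity from Proposition~\ref{p:Eulerweak1}(b), equal to $(1+\eps)T(x_i) - x_i$ by construction since the droplet's center travels from $x_i$ at $t=0$ to $(1+\eps)T(x_i)$ at $t=1$. This differs by only $\eps\,T(x_i)$ from the Wasserstein velocity $v(T_t(x_i),t) = T(x_i) - x_i$. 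Hence $v^\eps$ is almost spatially constant within each droplet, and the oscillations of $\rho^\eps$ and $v^\eps$ are \emph{coherent} rather than independent. Repeating the droplet-by-droplet Riemann-sum computation with the extra factor $b^{(i)}$ (for momentum) or $b^{(i)}\otimes b^{(i)}$ (for the kinetic tensor), and using continuity of $v$ on the non-singular set together with the pointwise bound $|v^\eps - b^{(i)}|=O(\sqrt\eps)$ and the trivial $L^\infty$ bound $|v^\eps|\le|b^{(i)}|+O(\sqrt\eps)$, yields the remaining two convergences in \eqref{e:rvwk6}.

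Finally, passing to the limit in the weak Euler equations \eqref{e:eulerwk1}-\eqref{e:eulerwk2}, which hold for $(\rho^\eps, v^\eps, p^\eps)$ by Proposition~\ref{p:Esprayweak}, using these convergences together with $p^\eps\to 0$ uniformly, yields the pressureless system \eqref{e:euler1p}-\eqref{e:euler2p} for the limit $(\rho,v)$.
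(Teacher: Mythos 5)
Your proposal is correct, and it rests on exactly the same quantitative facts as the paper's proof: the pressure bound \eqref{e:pbound} combined with the second condition in \eqref{est:epsor}, and the boosted velocity bound \eqref{e:vboundboost} showing $|v^\eps-b_i|=O(\sqrt\eps)$ with $b_i=(1+\eps)T(x_i)-x_i$ within the $i$-th droplet. The difference is organizational. The paper packages these estimates into a single Lagrangian statement (Lemma~\ref{lem:vlv}): the spray flow map satisfies $|X^\eps(z,t)-T_t(z)|+|\dot X^\eps(z,t)-\dot T_t(z)|\le \hat K\sqrt\eps$ uniformly on $\Omega_0^\eps\times[0,1]$. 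Since $\rho^\eps$ and $\rho$ are both pushforwards of Lebesgue measure on $\Omega_0$ under volume-preserving flows, each of the three weak-$\star$ limits then follows from a single change of variables $x=X^\eps(z,t)$ versus $x=T_t(z)$ and the smoothness of the test function --- no Riemann-sum or mesh argument is needed. Your Eulerian droplet-by-droplet decomposition reaches the same conclusion but requires the extra bookkeeping of the mesh bound $r_i\le\sqrt\eps K_1$, the droplet-diameter bound, and the two-step approximation (droplet integral to center value to Riemann sum); the Lagrangian route subsumes all of this. One small caution in your write-up: you invoke ``continuity of $v$ on the non-singular set,'' but $v$ need not be uniformly continuous up to $\Sigma_0$; what you actually need, and what holds, is that $|b_i-(T(z)-z)|\le \eps|T(x_i)|+|T(z)-T(x_i)|+|z-x_i|=O(\sqrt\eps)$ uniformly in $i$ for $z\in B_i$, which follows from \eqref{e:DT1}, \eqref{est:lot} and the covering conditions \eqref{est:epsor} rather than from any continuity of $v$ itself. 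This is how the paper closes that step, and your argument goes through verbatim once phrased this way.
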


As our first step toward proving this result, we describe
the bounds on pressure and velocity that come from the construction
of the Euler sprays constructed  above, for any given $\eps\in(0,1)$.
\begin{lemma} \label{lem:vlv}
Let $(Q^\eps,\phi^\eps,p^\eps)$, $0<\eps<1$, denote the Euler sprays
constructed in the proof of Theorem~\ref{th2}, and let $X^\eps\colon\Omega_0^\eps\times[0,1]\to\R^d$ denote the associated
flow maps, which satisfy
\[
\dot X^\eps(z,t) = \nabla\phi^\eps(X^\eps(z,t),t), \quad (z,t)\in\Omega_0^\eps\times[0,1],
\]
with $X^\eps(z,0)=z$. Then for some $\hat K>0$ independent
of $\eps$, we have 
\begin{equation}\label{e:pboundeps}
0\le p^\eps(x,t) \le  \hat K\eps
\end{equation}
{for all $(x,t)\in Q^\eps$}, and 
\begin{equation}\label{e:XTbound}
|X^\eps(z,t)- T_t(z)| +
|\dot X^\eps(z,t)-\dot T_t(z)| \le \hat K{\eps}
\end{equation}
{for all $(z,t)\in \Omega_0^\eps\times[0,1]$}, where $(z,t)\mapsto T_t(z)$
is the flow map from \eqref{e:Tt} for the Wasserstein geodesic.
\end{lemma}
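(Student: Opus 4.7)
The plan is to localize both estimates to individual ellipsoidal Euler droplets indexed by the Vitali-cover balls $B_i = B(x_i, r_i)$ from the proof of Proposition~\ref{p:ETspray}, and to exploit the two constraints in \eqref{est:epsor} that were built into the selection of the cover. A key preliminary observation, used throughout, is that since the eigenvalues of $DT = \Hess \psi$ have product $1$ we have $\ulam_i \le 1 \le \olam_i$, so the second bound in \eqref{est:epsor} yields the uniform estimate
\begin{equation*}
\olam_i\, r_i \;\le\; \frac{\olam_i^2}{\ulam_i}\, r_i \;\le\; \sqrt{\eps}\,\diam\Omega_1.
\end{equation*}

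The pressure bound \eqref{e:pboundeps} is then immediate: applying the pointwise estimate \eqref{e:pbound} on the $i$-th ellipsoidal Euler droplet gives $0 \le p^\eps \le (\olam_i^4/\ulam_i^2)\, r_i^2\, d$, and the second condition in \eqref{est:epsor} bounds this by $\eps\, d\,(\diam\Omega_1)^2$. So $\hat K := d(\diam\Omega_1)^2$ is enough for this half of the lemma.

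For the flow-map inequality \eqref{e:XTbound}, fix $z \in B_i$ and introduce the intermediate reference trajectory $S^\eps_t(z) := (1-t)z + t S^\eps(z)$, the displacement interpolation for the linearized-and-expanded map $S^\eps$ from \eqref{def:tildeT}. I would estimate $|X^\eps(z,t) - T_t(z)|$ through $S^\eps_t(z)$ by the triangle inequality. The Wasserstein-to-reference leg is $O(\eps)$ by Taylor's theorem and the centering \eqref{e:center}:
\begin{equation*}
|T_t(z) - S^\eps_t(z)| = t|T(z) - S^\eps(z)| \;\le\; \tfrac12\|D^3\psi\|_{B_i}\, r_i^2 + \eps\,|T(x_i)|,
\end{equation*}
with the first constraint in \eqref{est:epsor} handling the first term. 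For the reference-to-Euler leg, I would work in the eigenframe of $DT(x_i)$: by construction the Euler droplet nested inside the Wasserstein droplet (Proposition~\ref{p:nest}) is a boosted elliptical droplet whose center motion $x_i + t b_i$, with $b_i = (1+\eps)T(x_i) - x_i$, matches that of $S^\eps_t$, and whose axial lengths $a_j(t)$ form the constant-speed geodesic on the hyperboloid $\prod_j a_j = r_i^d$ of Proposition~\ref{p:Edrop} with the same endpoints as the chord $A_j(t) = r_i(1-t+t\lambda_j)$. Since each $a_j$ is convex in $t$ (Remark~\ref{r:aconvex}) and agrees with $A_j$ at $t=0,1$, one has $0 \le A_j - a_j \le \olam_i r_i$, whence
\begin{equation*}
|X^\eps(z,t) - S^\eps_t(z)| \;\le\; \max_j |a_j(t) - A_j(t)| \;\le\; \olam_i\, r_i \;\le\; \sqrt{\eps}\,\diam\Omega_1.
\end{equation*}
The velocity half proceeds in parallel: \eqref{e:vboundboost} gives $|\dot X^\eps(z,t) - b_i| = |v^\eps(X^\eps(z,t),t) - b_i| \le \sqrt{d\eps}\,\diam\Omega_1$, and a Taylor expansion of $T$ around $x_i$ yields
\begin{equation*}
|b_i - \dot T_t(z)| \;\le\; |I - DT(x_i)|\, r_i + \eps\,|T(x_i)| + \tfrac12\|D^3\psi\|_{B_i}\, r_i^2,
\end{equation*}
which is $O(\sqrt\eps)$ via $(1+\olam_i)r_i \le 2\olam_i r_i$ and the constraints in \eqref{est:epsor}. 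Another triangle inequality closes the velocity bound.

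The main obstacle is really one of bookkeeping rather than of mathematical substance: one must verify that each of $\|D^3\psi\|_{B_i} r_i^2$, $\olam_i r_i$, and the combined factor $\olam_i^4 r_i^2/\ulam_i^2$ is controlled uniformly in $i$ by the stated powers of $\sqrt\eps$ times a fixed geometric quantity. All of this follows cleanly from the two conditions in \eqref{est:epsor} once one extracts the uniform inequality $\olam_i r_i \le \sqrt\eps\,\diam\Omega_1$; the resulting $\hat K$ is a fixed multiple of $d(\diam\Omega_1)^2$ independent of $\eps$ and $i$.
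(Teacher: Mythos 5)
Your argument is correct and follows essentially the same route as the paper: localize to the droplets $B_i$, use the droplet pressure and velocity bounds \eqref{e:pbound} and \eqref{e:vboundboost} together with the two Vitali-cover conditions \eqref{est:epsor} (via the key uniform inequality $\olam_i r_i\le \olam_i^2 r_i/\ulam_i\le\sqrt\eps\,\diam\Omega_1$), and compare $b_i$ to $\dot T_t(z)$ by Taylor expansion. The only (immaterial) difference is that the paper obtains the position estimate by integrating the velocity estimate in time, whereas you derive it directly from the nesting/convexity of the ellipsoid axes; both work, up to dimensional constants absorbed into $\hat K$ (e.g.\ your coordinatewise bound $|a_j-A_j|$ gives $\sqrt{d}\max_j|a_j-A_j|$ for the Euclidean norm, not $\max_j$).
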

\begin{proof}
By the pressure bound for individual droplets in \eqref{e:pbound}
together with the second condition in \eqref{c:oor}, we have
the pointwise bound
\begin{equation}
0\le p^\eps \le  K_0\eps\,,\qquad 
K_0 = K_1^2 d\,,
\quad K_1=\diam\Omega_1.
\end{equation}
Next consider the velocity.  
The boosted elliptical Euler droplet that transports $B_i$ to $S^\eps(B_i)$ 
is translated by $x_i$, and 
boosted by the vector
\begin{equation}
b_i := (1+\eps)T(x_i)-x_i = \dot T_t(x_i) + \eps T(x_i)\,.
\end{equation} 
In this ``$i$-th droplet,'' the velocity satisfies, by the estimate 
\eqref{e:vboundboost}, 
\begin{equation}
|\nabla\phi^\eps-b_i| =
|v^\eps-b_i|\le K_0\eps\,.
\end{equation}
Now the particle velocity for the Euler spray compares to that
of the Wasserstein geodesic according to 
\begin{align}
|\dot X^\eps(z,t)- \dot T_t(z)| &\le
|\dot X^\eps-b_i| + |b_i-\dot T_t(z)|
\nonumber\\&
\le K_0\eps + \eps|T(x_i)|+ |T(z)-z - (T(x_i)-x_i)|
\nonumber\\&
\le K_0\eps + \eps|T(x_i)|+ r_i \max_j |\lambda_j-1|+\omega(x_i,r_i)
\nonumber\\&
\le {K_0\eps}+3K_1\eps \,. 
\end{align}
(Here $\lambda_j$ denote the eigenvalues of $\Hess\psi(x_i)$,
and we use \eqref{e:alex1} with
the fact that $|\lambda_j-1|r_i\le\olam(x_i)r_i< K_1 \eps$
and $\omega(x_i,r_i)<\eps r_i$ by \eqref{c:oor}.)
Upon integration in time we obtain both bounds in \eqref{e:XTbound}.
\end{proof}

\begin{proof}[Proof of Proposition~\ref{p:wkconv}]
Now, let $(\rho,v)$ be the density and velocity of the particle
paths for the Wasserstein geodesic, from \eqref{rhoinv} and \eqref{e:Wv}.
To prove $\rho^\eps\wksto\rho$ weak-$\star$ in $L^\infty$,
it suffices to show that as $\eps\to0$,
\begin{equation}
\int_0^1\int_{\R^d} (\rho^\eps-\rho) q\,dx\,dt  \to 0\,,
\end{equation}
for every smooth test function $q\in C_c^\infty(\R^d\times[0,1],\R)$.
Changing to Lagrangian variables using $X^\eps$ for the term with 
$\rho^\eps=\one_{Q^\eps}$
 and $T_t$ for the term with $\rho$, the left-hand side becomes
\begin{equation}
\int_0^1\int_{\Omega_0} 
(q(X^\eps(z,t),t)-q(T_t(z),t))\,dz\,dt \,.
\end{equation}
Evidently this does approach zero as $\eps\to0$, due to \eqref{e:XTbound}.

Next, we claim $\rho^\eps v^\eps\wksto \rho v$ weak-$\star$ in $L^\infty$.
Because these quantities are uniformly bounded, it suffices to show that
as $\eps \to 0$,
\begin{equation}
\int_0^1\int_{\R^d} (\rho^\eps v^\eps-\rho v)\cdot\tilde v\,dx\,dt \to 0
\end{equation}
for each $\tilde v\in C_c^\infty(\R^d\times[0,1],\R^d)$.
Changing variables in the same way, the left-hand side becomes
\begin{equation}
\int_0^1\int_{\Omega_0} 
\left( \dot X^\eps(z,t)\cdot \tilde v(X^\eps(z,t),t) - 
\dot T_t(z)\cdot \tilde v(T_t(z),t) \right)\,dz\,dt\,.
\end{equation}
But because $\tilde v$ is smooth and due to the bounds in \eqref{e:XTbound},
this also tends to zero as $\eps\to0$.  

It remains to prove $\rho^\eps v^\eps\otimes v^\eps\wksto \rho v\otimes v$
weak-$\star$ in $L^\infty$. Considering the terms componentwise,
the proof is extremely similar to the previous steps.
This finishes the proof of Theorem~\ref{th3}.
\end{proof}

To generalize Proposition~\ref{p:wkconv} to handle general densities 
$\rho_0,\rho_1\colon\R^d\to[0,1]$,
we will use a double approximation argument, comparing Euler sprays
to optimal Wasserstein geodesics for open sets whose characteristic functions
approximate $\rho_0$, $\rho_1$ in the sense of Lemma~\ref{l:approxrho},
then comparing these
to the Wasserstein geodesic that connects $\rho_0$ to $\rho_1$. 
We prove weak-star convergence for the second comparison by extending the
results from \cite{AGS} and \cite{Villani09} on
weak-$\star$ stability of transport plans to establish weak-$\star$ stability
of Wasserstein geodesic flows (in the Eulerian framework).

\begin{proposition}\label{p:Wgeostab}
Let $(\rho,v)$ be the density and transport velocity determined by 
the Wasserstein geodesic 
that connects the measures with given densities $\rho_0,\rho_1
\colon\R^d\to[0,1]$, measurable with compact support such that
\[
 \int_{\R^d}\rho_0 = \int_{\R^d}\rho_1\,.
\]
Let $(\bar\rho^k,\bar v^k)$ be the density and transport velocity determined
by the Wasserstein geodesic 
that connects the measures with densities $\one_{\Omega_0^k}$ and $\one_{\Omega_1^k}$,
where 
$\Omega_0^k$, $\Omega_1^k$, $k=1,2,\ldots$, are bounded open sets such that 
$|\Omega_0^k|=|\Omega_1^k|=\int_{\R^d}\rho_0$ and 
\[
d_\infty(\rho_0,\one_{\Omega_0^k})
+d_\infty(\rho_1,\one_{\Omega_1^k}) \to 0 \quad\mbox{as $k\to\infty$}.
\]
Then 
\begin{equation}\label{e:rvwk-Wass}
\bar\rho^k\wksto \rho,\qquad
\bar\rho^k \bar v^k\wksto \rho v, \qquad
\bar\rho^k \bar v^k\otimes \bar v^k \wksto \rho v\otimes v ,
\end{equation}
weak-$\star$ in $L^\infty$ on $\R^d\times[0,1]$.
Consequently $0\le\rho\le1$ a.e.\ in $\R^d\times[0,1]$.
\end{proposition}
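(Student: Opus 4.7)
The strategy is to represent each of the three quantities in \eqref{e:rvwk-Wass} as a time-dependent pushforward of an optimal Kantorovich plan, and then deduce convergence from stability of optimal plans under weak convergence of marginals. For each $k$, let $\pi^k$ be the unique optimal plan for the quadratic cost between $\one_{\Omega_0^k}\,dx$ and $\one_{\Omega_1^k}\,dx$, and let $\pi$ be the unique optimal plan between $\rho_0\,dx$ and $\rho_1\,dx$; uniqueness follows in both cases from absolute continuity of the first marginal and Brenier's theorem. The hypothesis $d_\infty(\rho_i,\one_{\Omega_i^k})\to 0$ guarantees the supports of all measures are contained in a fixed compact set $K\subset\R^d$ for $k$ large, and yields $\one_{\Omega_i^k}\,dx \wksto \rho_i\,dx$ for $i=0,1$. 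By classical stability of optimal plans under weak-$\star$ convergence of marginals with uniformly bounded cost (e.g.\ \cite[Prop.~7.1.3]{AGS} or \cite[Thm.~5.20]{Villani09}), it follows that $\pi^k\wksto \pi$ weak-$\star$ as probability measures on $K\times K$.

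With the evaluation map $e_t(x,y)=(1-t)x+ty$, the definition of the displacement interpolant gives
\[
\bar\rho^k(\cdot,t)\,dx = (e_t)_\sharp \pi^k,\qquad \rho(\cdot,t)\,dx = (e_t)_\sharp \pi,
\]
and, because the velocity along the Wasserstein geodesic at the point $e_t(x,y)$ equals $y-x$, the vector and tensor measures $\bar\rho^k \bar v^k$ and $\bar\rho^k \bar v^k\otimes \bar v^k$ at time $t$ are the pushforwards under $e_t$ of $(y-x)\,d\pi^k$ and $(y-x)\otimes(y-x)\,d\pi^k$, respectively. Consequently, for any $q\in C_c(\R^d\times[0,1])$,
\[
\int_0^1\!\!\int_{\R^d} q\,\bar\rho^k\,dx\,dt \;=\; \int_0^1\!\!\int_{K\times K} q(e_t(x,y),t)\,d\pi^k(x,y)\,dt,
\]
and the integrand is continuous and bounded on the compact set $K\times K\times[0,1]$. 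Weak-$\star$ convergence $\pi^k\wksto\pi$ therefore gives convergence to the analogous integral for $\pi$, which equals $\int_0^1\!\!\int_{\R^d} q\rho\,dx\,dt$. Running the same argument with the continuous bounded integrands $\tilde v(e_t(x,y),t)\cdot(y-x)$ and $M(e_t(x,y),t):(y-x)\otimes(y-x)$, for test fields $\tilde v\in C_c(\R^d\times[0,1],\R^d)$ and $M\in C_c(\R^d\times[0,1],\R^{d\times d})$, yields the corresponding weak convergence of the momentum and energy-flux quantities.

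Finally, Lemma~\ref{lem:dens} gives $\bar\rho^k\le 1$ a.e., and $|\bar v^k|\le\diam K$ pointwise by construction, so $\bar\rho^k$, $\bar\rho^k \bar v^k$, and $\bar\rho^k \bar v^k\otimes\bar v^k$ are uniformly bounded in $L^\infty(\R^d\times[0,1])$ with supports contained in $K\times[0,1]$. Density of $C_c$ in $L^1$ then upgrades the convergences against continuous test fields to weak-$\star$ convergence in $L^\infty$, and the bound $0\le\rho\le 1$ a.e.\ is preserved in the weak-$\star$ limit. The principal obstacle in this argument is the stability step $\pi^k\wksto\pi$; once it is granted, the rest reduces to elementary manipulations of pushforwards together with continuity and compact support of the integrands $q\circ e_t$, $\tilde v\circ e_t\cdot(y-x)$, and $M\circ e_t:(y-x)\otimes(y-x)$. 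The uniform compactness of supports, guaranteed by the $d_\infty$ hypothesis, is what makes the cost uniformly bounded and the stability theorem directly applicable.
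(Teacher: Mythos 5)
Your proposal is correct and follows essentially the same route as the paper's proof: both rest on weak-$\star$ stability of optimal plans (via uniqueness of the Brenier plan and \cite[Prop.~7.1.3]{AGS} or \cite[Thm.~5.20]{Villani09}), followed by rewriting the time-integrated density, momentum, and energy-flux against test functions as integrals of continuous bounded functions of $(x,y)$ against $\pi^k$ via the interpolation map $e_t(x,y)=(1-t)x+ty$ and the velocity identity $\bar v^k(e_t(x,y),t)=y-x$. Your explicit handling of the upgrade from $C_c$ test functions to weak-$\star$ convergence in $L^\infty$ (using the uniform bounds $\bar\rho^k\le 1$ and $|\bar v^k|\le\diam K$) and of the final assertion $0\le\rho\le1$ is a welcome addition to details the paper leaves implicit.
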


 \begin{proof}
Let $\pi$ (resp. $\pi^k$) be the optimal transport plan connecting $\rho_0$ to $\rho_1$
(resp. $\one_{\Omega_0^k}$ to $\one_{\Omega_0^k}$).
These plans take the form 
$\pi = (\id\times T)_\sharp\rho_0$ 
(resp.  $\pi^k = (\id\times T^k)_\sharp\one_{\Omega_0^k}$)
where $T$ (resp. $T^k$) is the Brenier map.
Then by \cite[Theorem~5.20]{Villani09} or \cite[Proposition 7.1.3]{AGS}, 
we know that $\pi^k$ converges weak-$\star$ to $\pi$ in the space of Radon measures
on $\R^d\times\R^d$.  

We will prove that $\bar\rho^k\bar v^k\wksto\rho v$; it will be clear that the remaining results
in \eqref{e:rvwk-Wass} are similar.  Let $\varphi\colon \R^d\times[0,1]\to \R^d$ be smooth
with compact support.  We claim that 
\begin{equation}\label{e:wkgoal}
\int_0^1\int_{\R^d} \bar\rho^k \bar v^k \varphi(x,t) \,dx\,dt \to
\int_0^1\int_{\R^d} \rho v \varphi(x,t) \,dx\,dt .
\end{equation}
Recall from \eqref{e:v}
that the geodesic velocities $\bar v^k(x,t)$ satisfy
\[
\bar v^k((1-t)z+tT^k(z),t) = T^k(z)-z. 
\]
Hence the left-hand side of \eqref{e:wkgoal} can be written 
in the form 
\[
\int_0^1 \int_{\R^d\times\R^d} (y-z)\varphi((1-t)z+ty,t)\,d\pi^k(z,y) \,dt
= \int_{\R^d\times\R^d}\psi(z,y)\,d\pi^k(z,y) \,,
\]
where
\[
\psi(z,y) = \int_0^1 (y-z)\varphi((1-t)z+ty,t)\,dt.
\]
Due to the fact that $\pi^k\wksto\pi$ and all these measures are supported in
a fixed compact set, as $k\to\infty$ we obtain the limit
\begin{equation}
\begin{split} \label{e:wklim}
 \int_{\R^d\times\R^d}\psi(z,y)\,d\pi(z,y) 
&=\int_0^1 \int_{\R^d\times\R^d} (y-z)\varphi((1-t)z+ty,t)\,d\pi(z,y) \,dt
\\
&= 
\int_0^1 \int_{\R^d} (T(z)-z)\varphi(T_t(z),t)\,\rho_0(z)\,dz \,dt\,,
\end{split}
\end{equation}
where $T_t(z)= (1-t)z+tT(z)$. 
To conclude the proof, we need to recall how $\rho$ and $v$ are
determined by displacement interpolation, in a precise technical sense
for the present case when $\rho_0$ and $\rho_1$ lack smoothness.
Indeed, from the results in Lemma~5.29 and Proposition~5.30 of 
\cite{Santa} (also see Proposition~8.1.8 of \cite{AGS}), 
we find that with the notation
\[
x_t(z,y) = (1-t)z+ty \,,
\]
the measure $\mu_t$ with density $\rho_t$ is given by the pushforward
\begin{equation}\label{e:pushrho}
    \mu_t = (x_t)_\sharp\pi = (x_t)_\sharp 
(\id\times T)_\sharp \mu_0
= (T_t)_\sharp (\rho_0\,dz)  \,,
\end{equation}
and the transport velocity is given by 
\begin{equation}\label{e:pushv}
v(x,t)=(T-\id)\circ (T_t)\inv(x).
\end{equation}
Thus we may use $T_t$ to push forward the measure $\rho_0(z)\,dz=d\mu_0(z)$
in \eqref{e:wklim} to write, for each $t\in[0,1]$,
\begin{equation}\label{e:pushTt}
\int_{\R^d} (T(z)-z)\varphi(T_t(z),t)\,\rho_0(z)\,dz 
= \int_{\R^d} v(x,t)\varphi(x,t)\,\rho_t(x)\,dx.
\end{equation}
It then follows that \eqref{e:wkgoal} holds, as desired.
\end{proof}

\begin{remark} 
    The validity of the continuity equation \eqref{e:euler1p} for $(\rho,v)$ 
    is well known and established in several sources, e.g., see \cite[Theorem~5.51]{Villani03}
    or \cite[Chapter~5]{Santa}.
    The step above going from \eqref{e:wklim} to \eqref{e:pushTt} provides an answer to 
    the related exercise 5.52 in \cite{Villani03}.  We are not aware, however, of any source where
    the momentum equation \eqref{e:euler2p} for $(\rho,v)$ is explicitly and rigorously justified.
\end{remark}

\begin{proof}[Proof of Theorem~\ref{th3}.]
Let us now finish the proof of Theorem~\ref{th3}.
As any ball in $L^\infty(\R^d\times[0,1])$ is metrizable \cite[Theorem~V.5.1]{DSbook},
we may fix a metric $d$ in a large enough ball, and select $\eps_k>0$ for each $k\in\N$
such that for the quantities
$(\rho^k,v^k,p^k):=(\rho^{\eps_k},v^{\eps_k},p^{\eps_k})$
coming from the Euler sprays of Proposition~\ref{p:wkconv},
the components of $\rho^k$, $\rho^k v^k$ and $\rho^k v^k\otimes v^k$
approximate the corresponding quantities 
$\bar\rho^{k}$, $\bar\rho^{k}\bar v^{k}$ and $\bar\rho^{k}\bar v^{k}\otimes \bar v^{k}$
that appear in Proposition~\ref{p:Wgeostab}, within distance $1/k$.
That is,
\[
\max\left(\ d(\rho^k,\bar\rho^k), \ 
d(\rho^kv^k_i,\bar\rho^k \bar v^k_i), \ 
d(\rho^kv^k_iv^k_j,\bar\rho^k \bar v^k_i\bar v^k_j) \ \right) < \frac1k.
\]
Then the convergences asserted in \eqref{e:rvwk} evidently hold.
\end{proof}

\subsection{Convergence in the stronger \pdfx{$\tlp$}{TLp} sense.} \label{r:TLp} 
The convergences described in Propositions~\ref{p:wkconv} and \ref{p:Wgeostab}
and Theorem~\ref{th3}
actually hold in a stronger sense related to the $\tlp$ metric that was introduced in
\cite{GTS} to measure differences between functions defined with respect to different measures. 
We recall the definition of the $TP^p$ metric and a number of its properties
in appendix \ref{app:TLp}.

Our first result strengthens the conclusions drawn in Proposition~\ref{p:wkconv}.
\begin{proposition} \label{p:weakconvtlp}
Under the same hypotheses as Proposition~\ref{p:wkconv} and Lemma~\ref{lem:vlv},
the map that associates $T_t(x)$ with $X^\eps_t(x)=X^\eps(x,t)$, defined by 
$Y^\eps_t = X^\eps_t\circ T_t\inv$, pushes forward $\rho_t$ to $\rho^\eps_t$ 
and we have the estimate
\begin{align}
 |x-Y^\eps_t(x)| + |v_t(x)-v^\eps_t(Y^\eps_t(x))| \le \hat K \eps
\end{align}
for all $t\in[0,1]$ and $\rho_t$-a.e. $x$. By consequence, for all $t\in[0,1]$ we have
\[
d_{T\!L^\infty}((\rho_t,v_t),(\rho^\eps_t,v^\eps_t)) \le \hat K\eps.
\]
\end{proposition}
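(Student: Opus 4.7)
The plan is to exploit the fact that the bound in Lemma~\ref{lem:vlv} holds for the flow maps $X^\eps$ and $T_t$ evaluated at the \emph{same} Lagrangian label $z\in\Omega_0^\eps$, while the $T\!L^\infty$ distance asks for a comparison along a transport map in Eulerian variables. The natural choice that converts the former into the latter is precisely the map $Y^\eps_t=X^\eps_t\circ T_t\inv$, because composing with $T_t\inv$ pulls an Eulerian point $x\in\supp\rho_t$ back to its Wasserstein Lagrangian label and then pushes it forward along the Euler-spray flow.

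First I would verify that $Y^\eps_t$ is well-defined a.e.\ and that it pushes $\rho_t$ forward to $\rho^\eps_t$. Inequality \eqref{e:Ttbound} shows that $T_t$ is injective on the set of full Lebesgue measure in $\Omega_0$ where $\nabla\psi$ is defined, so $T_t\inv$ is defined $\rho_t$-a.e.\ on $\supp\rho_t$. Since $(T_t)_\sharp\one_{\Omega_0}=\rho_t$ and, by the construction of the Euler spray, $(X^\eps_t)_\sharp\one_{\Omega_0^\eps}=\one_{Q^\eps_t}=\rho^\eps_t$ with $|\Omega_0\setminus\Omega_0^\eps|=0$, the chain of pushforwards gives $(Y^\eps_t)_\sharp\rho_t=(X^\eps_t)_\sharp\rho_0=\rho^\eps_t$.

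Second, I would prove the pointwise inequality by unwinding definitions. For $\rho_t$-a.e.\ $x$ there is a unique $z\in\Omega_0^\eps$ with $x=T_t(z)$, so $Y^\eps_t(x)=X^\eps(z,t)$. The spatial bound is immediate from Lemma~\ref{lem:vlv}:
\[
|x-Y^\eps_t(x)|=|T_t(z)-X^\eps(z,t)|\le\hat K\sqrt\eps.
\]
For the velocity bound, recall from \eqref{e:v} together with \eqref{e:Tt} that the Wasserstein transport velocity satisfies $v_t(T_t(z))=\nabla\psi(z)-z=\dot T_t(z)$, while the Euler-spray velocity along its own flow line satisfies $v^\eps_t(X^\eps(z,t))=\dot X^\eps(z,t)$. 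Substituting $x=T_t(z)$ therefore gives
\[
|v_t(x)-v^\eps_t(Y^\eps_t(x))|=|\dot T_t(z)-\dot X^\eps(z,t)|\le\hat K\sqrt\eps,
\]
again by Lemma~\ref{lem:vlv}. Adding the two estimates produces the claimed pointwise bound.

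Finally, the $T\!L^\infty$ conclusion is a direct consequence of the definition recalled in Appendix~\ref{app:TLp}: the distance $d_{T\!L^\infty}((\mu,f),(\nu,g))$ is bounded by $\max(\|Y-\id\|_{L^\infty(\mu)},\|f-g\circ Y\|_{L^\infty(\mu)})$ for any map $Y$ with $Y_\sharp\mu=\nu$. Applying this with $\mu=\rho_t$, $\nu=\rho^\eps_t$, $f=v_t$, $g=v^\eps_t$, and $Y=Y^\eps_t$, and enlarging $\hat K$ as needed, yields the stated bound. The only genuine subtlety to watch is the handling of the null sets $\Sigma_0$ and $\Omega_0\setminus\Omega_0^\eps$ in defining $T_t\inv$ $\rho_t$-a.e.; once this is addressed, the argument is otherwise a short identification.
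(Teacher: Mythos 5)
Your proposal is correct and follows essentially the same route as the paper, which simply observes that the result is immediate from estimate \eqref{e:XTbound} of Lemma~\ref{lem:vlv} via the coupling induced by $Y^\eps_t$ (equivalently, the plan $(X^\eps(\tacka,t)\times T_t)_\sharp\rho_0$). Your unwinding of the definitions, including the injectivity of $T_t$ needed to define $T_t\inv$ up to null sets, is exactly the intended argument.
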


This result follows immediately from estimate~\eqref{e:XTbound} of Lemma~\ref{lem:vlv}.
Expressed in terms of couplings,
using the transport plan that associates $X^\eps(z,t)$ with $T_t(z)$
given by the pushforward
\[
\pi^\eps = (X^\veps( \tacka, t) \times T_t)_\sharp \rho_0\,,
\]
the estimate \eqref{e:XTbound} implies that
for $\pi^\eps$-a.e.\ $(x,y)$, for all $t\in[0,1]$ we have
\[
|x-y| + |v_\veps(x,t) - v(y,t)|  \le \hat K{\eps}\,.
\]

Next we improve the conclusions of Proposition~\ref{p:Wgeostab} by 
invoking the results of 
Proposition~\ref{p:atime} in the Appendix. 

\begin{proposition}\label{p:Wgstabtlp}
Under the assumptions of Proposition~\ref{p:Wgeostab}, there exist transport maps
$\bar S^k$ that push forward $\rho_0$ to $\bar\rho_0^k=\one_{\Omega_0^k}$, 
such that 
\begin{equation}\label{e:barSk}
\|\id - \bar S^k\|_{L^\infty(\rho_0\,dx)} \to 0 \quad\mbox{as $k\to\infty$}\,,
\end{equation}
and for any such sequence of transport maps, the maps given by 
\[
\bar S^k_t = T^k_t\circ \bar S^k \circ T_t\inv
\]
push forward $\rho_t$ to $\bar\rho^k_t$ and satisfy, as $k\to\infty$,
\begin{align}
&\sup_{t\in[0,1]} \int |x - \bar S^k_t(x)|^2 \,\rho_t(x)\,dx \to 0,
\label{e:brconv}\\
&\sup_{t\in[0,1]} \int |v_t(x)-\bar v^k_t(\bar S^k_t(x))|^2\,\rho_t(x)\,dx \to0\,,
\label{e:brvconv}\\
&\sup_{t\in[0,1]} 
\int |(v_t\otimes v_t)(x)- (\bar v_t^k\otimes \bar v_t^k)(\bar S^k_t(x))|\,\rho_t(x)\,dx\to0\,.
\end{align}
\end{proposition}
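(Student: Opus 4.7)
The plan is to proceed in three steps. First, I construct the maps $\bar S^k$ realizing \eqref{e:barSk}: since $\rho_0\,dx$ is absolutely continuous, the definition \eqref{d:dinfty} of the $L^\infty$ transportation distance yields (via \cite[Thm.~3.24]{Santa}) a transport map $\bar S^k$ with $\bar S^k_\sharp (\rho_0\,dx) = \one_{\Omega_0^k}\,dx$ and $\|\id - \bar S^k\|_{L^\infty(\rho_0)} = d_\infty(\rho_0, \one_{\Omega_0^k})$, so \eqref{e:barSk} is just the hypothesis.

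Second, for any such $\bar S^k$, I verify that $\bar S_t^k = T_t^k \circ \bar S^k \circ T_t\inv$ is well-defined $\rho_t$-a.e.\ and pushes $\rho_t$ onto $\bar\rho_t^k$. For $t \in [0,1)$, strict monotonicity of $\nabla\psi$ (via \eqref{e:Ttbound}) gives injectivity of $T_t$ off a $\rho_0$-null set, so $T_t\inv$ is defined on a set of full $\rho_t$-measure. The pushforward identity then factors as the composition $(T_t\inv)_\sharp\rho_t = \rho_0$, $(\bar S^k)_\sharp\rho_0 = \bar\rho_0^k$, $(T_t^k)_\sharp\bar\rho_0^k = \bar\rho_t^k$; the endpoint $t=1$ follows by passing to the limit $t \to 1^-$.

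Third, the $L^2$ convergences follow after the change of variables $x = T_t(z)$, which sends $\rho_t(x)\,dx$ to $\rho_0(z)\,dz$. From the decomposition
\begin{equation*}
T_t(z) - T_t^k(\bar S^k(z)) = (1-t)(z - \bar S^k(z)) + t(T(z) - T^k(\bar S^k(z))),
\end{equation*}
the triangle inequality reduces \eqref{e:brconv} to the $L^\infty(\rho_0)$-smallness of $z - \bar S^k(z)$ (given by \eqref{e:barSk}) plus the $L^2(\rho_0)$-smallness of $T(z) - T^k(\bar S^k(z))$, both with constants independent of $t$; the latter is precisely the stability conclusion of Proposition~\ref{p:atime}. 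The geodesic-velocity identities $v_t(T_t(z)) = T(z) - z$ and $\bar v_t^k(T_t^k(w)) = T^k(w) - w$ give, after the same change of variables,
\begin{equation*}
v_t(x) - \bar v_t^k(\bar S^k_t(x)) = (T(z) - T^k(\bar S^k(z))) + (\bar S^k(z) - z),
\end{equation*}
so \eqref{e:brvconv} follows from the same two bounds. The tensor convergence uses the algebraic identity $a \otimes a - b \otimes b = (a - b) \otimes a + b \otimes (a - b)$ and Cauchy--Schwarz, together with uniform $L^\infty$ bounds on $v_t$ and $\bar v_t^k$ implied by boundedness of the endpoint supports.

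The main obstacle is the strong $L^2(\rho_0)$ convergence $T^k \circ \bar S^k \to T$ needed in the third step. The weak-$\star$ stability $\pi^k \wksto \pi$ from \cite[Thm.~5.20]{Villani09} used in Proposition~\ref{p:Wgeostab} is by itself insufficient: since $\bar S^k$ is merely some transport map pushing $\rho_0$ onto $\one_{\Omega_0^k}$---not the optimal one---the composition $T^k \circ \bar S^k$ is not the graph-marginal of a converging optimal plan. Bridging this gap is precisely the role of the strengthened stability result of Proposition~\ref{p:atime} in the appendix.
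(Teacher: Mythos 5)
Your proposal is correct and follows essentially the same route as the paper: the maps $\bar S^k$ come from the existence of optimal maps for the $L^\infty$ transport distance (\cite[Thm.~3.24]{Santa}), and the three convergences are exactly the content of Proposition~\ref{p:atime} applied to the stagnating sequence $(\bar S^k)$. Your third step in effect reproves that appendix proposition---the decomposition $T_t(z)-T^k_t(\bar S^k(z))=(1-t)(z-\bar S^k(z))+t(T(z)-T^k(\bar S^k(z)))$ and the $t$-independence of the velocity comparison are precisely the observations made there---and you correctly identify that the strengthened map-level stability (Propositions~\ref{otstab} and \ref{p:ax}), not mere weak-$\star$ stability of plans, is the essential ingredient.
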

\begin{proof} The existence of the maps $\bar S^k$ follow from the fact that
$d_\infty(\rho_0,\bar\rho_0^k)\to0$ as $k\to\infty$, and existence of 
optimal transport maps for these distances, see Theorem~3.24 of \cite{Santa}.
The remaining statements follow from Proposition~\ref{p:atime} in the Appendix.
\end{proof}

By combining the last two results, we obtain the following improvement
of the conclusions of Theorem~\ref{th3}.

\begin{theorem}\label{t:tlptime}
Under the same hypotheses as Theorem~\ref{th3}, we have the following. 
Let $\rho^k_0$, $\bar S^k$ be as in Proposition~\ref{p:Wgstabtlp},
and let $(\rho^k,v^k,p^k)$ be solutions of the Euler system~\eqref{e:euler1p}--\eqref{e:euler2p}
coming from the Euler sprays of Theorem~\ref{th2}, chosen as in the proof
of Theorem~\ref{th3}.  Define
\begin{equation}
S^k_t = X^k_t\circ S^k\circ T_t\inv\,.
\end{equation}
Then $(S^k_t)_\sharp (\rho_t\,dx) = \rho^k_t\,dx$, and 
\begin{align}
& \sup_{t\in[0,1]} \int |x-S^k_t(x)|^2 \,\rho_t(x)\,dx \to 0\,,
\\& \sup_{t\in[0,1]} \int |v_t(x)-v^k_t(S^k_t(x))|^2\,\rho_t(x)\,dx \to0\,,
\\& \sup_{t\in[0,1]} \int |(v_t\otimes v_t)(x)-(v^k_t\otimes v^k_t)(S^k_t(x))|\,\rho_t(x)\,dx \to0.
\end{align}

\end{theorem}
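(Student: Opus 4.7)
The plan is to factor the map $S^k_t$ through the Wasserstein comparison map $\bar S^k_t$ of Proposition~\ref{p:Wgstabtlp}, reducing the three assertions to a triangle-inequality combination of (i) the uniform pointwise $\sqrt{\eps_k}$-bound from Proposition~\ref{p:weakconvtlp} comparing each Euler spray to the Wasserstein geodesic between $\one_{\Omega_0^k}$ and $\one_{\Omega_1^k}$, and (ii) the $L^2(\rho_t)$-convergence from Proposition~\ref{p:Wgstabtlp} comparing the latter Wasserstein geodesic to the target one. Before starting, I would tighten the selection in the proof of Theorem~\ref{th3} by insisting in addition that $\eps_k\to 0$ (say $\eps_k<1/k$); this is compatible with the original selection because all the requirements there are open conditions under shrinking $\eps$.

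\textbf{Pushforward and algebraic identity.} Set $Y^k_t := X^k_t\circ (T^k_t)\inv$. Using $\bar S^k_t = T^k_t\circ \bar S^k\circ T_t\inv$ from Proposition~\ref{p:Wgstabtlp}, one has the identity
\[
S^k_t \;=\; X^k_t\circ \bar S^k\circ T_t\inv \;=\; Y^k_t\circ \bar S^k_t.
\]
The pushforward property $(S^k_t)_\sharp(\rho_t\,dx) = \rho^k_t\,dx$ is then immediate by composition: $T_t\inv$ sends $\rho_t\,dx$ to $\rho_0\,dx$, $\bar S^k$ sends this to $\bar\rho^k_0\,dx=\one_{\Omega_0^k}\,dx$, and $X^k_t$ sends this to $\rho^k_t\,dx$.

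\textbf{Position and velocity estimates.} Proposition~\ref{p:weakconvtlp}, applied with $T^k_t$ playing the role of the Wasserstein flow map, delivers the uniform pointwise bound
\[
|x-Y^k_t(x)| + |\bar v^k_t(x) - v^k_t(Y^k_t(x))| \le \hat K\sqrt{\eps_k}
\qquad (\bar\rho^k_t\text{-a.e.}\ x,\ t\in[0,1]).
\]
For the first conclusion of the theorem, split $|x-S^k_t(x)| \le |x-\bar S^k_t(x)| + |\bar S^k_t(x) - Y^k_t(\bar S^k_t(x))|$, square, integrate against $\rho_t\,dx$, and change variables via $\bar S^k_t$ in the second term (whose pushforward of $\rho_t$ is $\bar\rho^k_t$) to control it by $2\hat K^2\eps_k|\Omega_0^k|\to 0$; the first term vanishes uniformly in $t$ by \eqref{e:brconv}. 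The second conclusion follows in an identical manner from the decomposition
\[
v_t(x) - v^k_t(S^k_t(x)) = \bigl(v_t(x) - \bar v^k_t(\bar S^k_t(x))\bigr) + \bigl(\bar v^k_t(\bar S^k_t(x)) - v^k_t(Y^k_t(\bar S^k_t(x)))\bigr),
\]
combining \eqref{e:brvconv} with the uniform $L^\infty$ bound above and taking $\sup_t$.

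\textbf{Tensor case and principal obstacle.} For the third conclusion I would use the elementary identity $a\otimes a - b\otimes b = (a-b)\otimes a + b\otimes(a-b)$, which yields the pointwise bound
\[
|v_t\otimes v_t(x) - (v^k_t\otimes v^k_t)(S^k_t(x))| \;\le\; |v_t(x) - v^k_t(S^k_t(x))|\,\bigl(|v_t(x)| + |v^k_t(S^k_t(x))|\bigr).
\]
Both factors on the right are uniformly controlled: $v_t(T_t(z))=T(z)-z$ is bounded by the diameter of $\supp(\rho_0)\cup\supp(\rho_1)$, while the spray velocities $v^k_t$ are uniformly bounded in $L^\infty$ via Lemma~\ref{lem:vlv} (since $\bar v^k_t$ is bounded by diameters of $\Omega_0^k\cup\Omega_1^k$, which stay in a fixed compact set). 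Cauchy--Schwarz together with the second conclusion then finishes the proof. The only real technical obstacle is bookkeeping uniformity in $t\in[0,1]$ throughout, but both intermediate propositions are already stated in their $\sup_t$ form, so no new regularity issues arise; the apparent typographic ambiguity ``$S^k$'' in the theorem statement is simply $\bar S^k$ in the notation of Proposition~\ref{p:Wgstabtlp}.
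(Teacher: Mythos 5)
Your proposal is correct and follows essentially the same route as the paper: the paper's (very terse) proof likewise uses Proposition~\ref{p:weakconvtlp} to compare $S^k_t$, $v^k_t\circ S^k_t$ and the tensor term with $\bar S^k_t$, $\bar v^k_t\circ\bar S^k_t$ and its tensor counterpart, and then concludes by the triangle inequality with Proposition~\ref{p:Wgstabtlp}. Your factorization $S^k_t=Y^k_t\circ\bar S^k_t$, the change of variables pushing $\rho_t$ to $\bar\rho^k_t$, and the $a\otimes a-b\otimes b$ identity with uniform $L^\infty$ velocity bounds simply make explicit the details the paper leaves to the reader, and your reading of $S^k$ as $\bar S^k$ is the intended one.
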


\begin{proof}
Using Proposition~\ref{p:weakconvtlp}, we can deduce that 
\begin{align}
&\sup_{t\in[0,1]} \int |S^k_t(x) - \bar S^k_t(x)|^2 \,\rho_t(x)\,dx \to 0,
\label{e:brconv2}\\
&\sup_{t\in[0,1]} \int |v^k_t(S^k_t(x))-\bar v^k_t(\bar S^k_t(x))|^2\,\rho_t(x)\,dx \to0\,,
\label{e:brvconv2}\\
&\sup_{t\in[0,1]} 
\int |(v^k_t\otimes v^k_t)(S^k_t(x))- (\bar v_t^k\otimes \bar v_t^k)(\bar S^k_t(x))|\,\rho_t(x)\,dx\to0\,.
\end{align}
Combining these with the results of Proposition~\ref{p:Wgstabtlp} finishes the proof.
\end{proof}


\section{Relaxed least-action principles for two-fluid incompressible flow and displacement interpolation}\label{s:brenier}

In a series of papers that includes 
\cite{Brenier89,Brenier92,Brenier97,Brenier99,Brenier2008,Brenier2013}, 
Brenier studied Arnold's least-action principles
for incompressible Euler flows by introducing relaxed versions 
that involve convex minimization problems, for which duality 
principles yield information about minimizers and/or minimizing sequences.

In this section, we describe a simple variant of Brenier's theories
that provides a relaxed least-action principle for a two-fluid 
incompressible fluid mixture in an Eulerian framework,
in which one fluid can be taken as vacuum.  
For this mixture model in the fluid/vacuum case,
we show that the displacement interpolant (Wasserstein geodesic)
provides the unique minimizer, and moreover the concatenated Euler sprays
that we constructed to prove Theorem~\ref{t:complete}
provide a minimizing sequence for the relaxed problem.

Our mixture model can be cast in an equivalent weaker form
that compares closely with work of 
Lopes Filho et al.~\cite{LopesNP}, who studied a 
variant of Brenier's relaxed least-action principles 
for variable density incompressible flows in a Lagrangian framework  
when the fluid density is positive everywhere.
We describe this weaker equivalent in Appendix~\ref{a:LAP},
where we also use it to prove existence of a minimizer in the general case.

\subsection{Kinetic energy and least-action principle for two fluids}

We recall that a key idea behind Brenier's work is that kinetic energy
can be reformulated in terms of convex duality, based on the idea that
kinetic energy is a jointly convex function of density and momentum. 
In order to handle possible vacuum, we extend this idea in the following way. 
Let $\hat\varrho\ge0$ be a constant (representing the density of one fluid).
We define $\hat K_{\hat\varrho}$ as the Legendre transform of the indicator function 
of the paraboloid 
\begin{equation}\label{d:parabol}
P_{\hat\varrho} = \left\{(a,b)\in \R\times\R^d: a+\tfrac12\hat\varrho|b|^2\le 0 \right\},
\end{equation}
given for $(x,y)\in \R\times\R^d$ by
\begin{equation}\label{d.hK}
\hat K_{\hat\varrho}(x,y) = \sup_{(a,b)\in P_{\hat\varrho}} ax+b\cdot y .
\end{equation}
We find the following.
\begin{lemma} Let $\hat\varrho\ge0$ and define $\hat K$ by \eqref{d.hK}.
Then $\hat K_{\hat\varrho}$ is convex, and 
\begin{equation}
\hat K_{\hat\varrho}(x,y) = \begin{cases}
\displaystyle 
\frac12 \frac{|y|^2}{\hat\varrho x} 
& \mbox{if $y\ne0$ and $\hat\varrho x>0$,}\\
0 & \mbox{if $y=0$ and $x\ge 0$,}\\
+\infty &  \mbox{else}.
\end{cases}
\end{equation}
In case $\hat\varrho>0$, we have the scaling property
\begin{equation}\label{s.hK}
\hat K_{\hat\varrho}(\hat\varrho x,\hat\varrho y)= \hat K_1(x,y).
\end{equation}
\end{lemma}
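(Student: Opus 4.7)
The plan is to observe that convexity is automatic from the definition and then carry out a short case analysis to obtain the explicit formula, from which the scaling property is immediate.

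For convexity, note that $\hat K_{\hat\varrho}$ is defined in \eqref{d.hK} as the pointwise supremum over $(a,b) \in P_{\hat\varrho}$ of the linear (hence convex) functions $(x,y) \mapsto ax + b\cdot y$. A pointwise supremum of convex functions is convex (and lower semicontinuous), so this requires no further argument. Equivalently, $\hat K_{\hat\varrho}$ is the Legendre–Fenchel conjugate of the indicator function of $P_{\hat\varrho}$, and the conjugate of any function is convex lsc.

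For the explicit formula, I will split into cases according to the sign of $x$ and whether $\hat\varrho x > 0$. First suppose $\hat\varrho > 0$ and $x > 0$. On $P_{\hat\varrho}$ we have $a \le -\tfrac12\hat\varrho|b|^2$, and since $x>0$ the supremum in $a$ is attained at $a = -\tfrac12\hat\varrho|b|^2$, reducing the problem to maximizing $b \mapsto -\tfrac12\hat\varrho|b|^2 x + b\cdot y$ over $\R^d$. The critical point is $b^* = y/(\hat\varrho x)$, and substituting gives the value $|y|^2/(2\hat\varrho x)$, which handles both the $y \ne 0$ case (yielding the quoted formula) and the $y=0$ case (yielding $0$). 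If instead $x=0$ and $y=0$, then $ax+b\cdot y=0$ identically on $P_{\hat\varrho}$, so the supremum is $0$. If $x<0$, take $b=0$ and let $a\to-\infty$ along $P_{\hat\varrho}$ to get $ax \to +\infty$. If $x = 0$ and $y \ne 0$, take $a = -\tfrac12\hat\varrho|b|^2$ and $b = ty$ with $t\to+\infty$ to get $b\cdot y = t|y|^2 \to +\infty$. Finally, the degenerate case $\hat\varrho = 0$ must be treated separately: then $P_0 = \{(a,b): a \le 0\}$, and the supremum is $0$ when $y=0$ and $x\ge 0$ (attained at $a=0$, $b=0$) and $+\infty$ otherwise (either $b$ is free when $y\ne 0$, or $a$ is unbounded below when $x<0$). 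Collecting these cases yields exactly the piecewise formula asserted, with the implicit understanding that "$\hat\varrho x>0$" forces $\hat\varrho>0$.

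For the scaling identity, suppose $\hat\varrho > 0$. In the nontrivial regime $y \ne 0$ and $x > 0$, a direct substitution in the explicit formula gives
\[
\hat K_{\hat\varrho}(\hat\varrho x, \hat\varrho y)
= \frac{|\hat\varrho y|^2}{2\hat\varrho(\hat\varrho x)}
= \frac{|y|^2}{2x}
= \hat K_1(x,y).
\]
In the remaining cases both sides are either $0$ (when $y=0$, $x\ge 0$) or $+\infty$ (when $x < 0$, or when $x=0$ and $y\ne 0$), so equality holds throughout $\R\times\R^d$.

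The only subtlety is keeping the degenerate boundary cases ($x=0$, $\hat\varrho=0$, and the sign conventions at the rim of the paraboloid) consistent with the convention that the infimum over an empty set is $+\infty$; this is bookkeeping rather than a genuine difficulty.
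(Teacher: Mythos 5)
Your proof is correct and is exactly the ``straightforward calculation based on cases'' that the paper explicitly leaves to the reader: convexity as a supremum of linear functions, optimization in $a$ then $b$ for $\hat\varrho x>0$, and the boundary/degenerate cases handled by sending $a\to-\infty$ or $|b|\to\infty$. The only nit is the closing remark about ``the infimum over an empty set'': $P_{\hat\varrho}$ is never empty and the definition is a supremum, so that convention plays no role here.
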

The proof of this lemma is a straightforward calculation based on cases
that we leave to the reader.  We emphasize that $\hat\varrho=0$ is allowed.
Indeed, for $\hat\varrho=0$, $\hat K_0$ reduces to the indicator function for the 
closed half-line 
\[
\{(x,y):y=0,\ x\ge0\}. 
\]

Suppose $c\in\R$ represents the `concentration' of one fluid 
and $m\in\R^d$ represents the `momentum' of this fluid, at some 
point in the flow.  If $\hat K_{\hat\varrho}(c,m)<+\infty$, then $c\ge0$ and
$m=\hat\varrho c v$ for some `velocity' $v\in\R^d$ which satisfies 
\begin{equation} 
\hat K_{\hat\varrho}(c,m) = \frac12 \hat\varrho c|v|^2.
\end{equation}

Next we begin to describe our relaxed least-action principle for 
two-fluid incompressible flow. 
Consider fluid flow inside a large box for unit time, with
\[
\Omega = [-L,L]^d \,,\qquad Q = \Omega\times[0,1]\,.
\]
Let $\hat\varrho_i$,  $i=0,1$,
be constants representing the densities of two fluids, with 
$\hat\varrho_1>\hat\varrho_0\ge0$.  (More fluids could be considered,
but we have no reason to do so at this point.)
Next we let $c_i(x,t)$, $i=0,1$, represent the concentration of fluid $i$ 
at the point $(x,t)\in Q$.
For classical flows, the fluids should occupy non-overlapping regions of space-time,
meaning that the concentrations are characteristic functions
$c_i=\one_{Q_i}$ with 
\begin{equation}\label{d:Omega2}
\Qdom_i = \bigcup_{t\in[0,1]} \Omega_{i,t}\times\{t\} \,,
\qquad Q=\bigsqcup_i Q_i \,.
\end{equation}
The requirement $c_i(x,t)\in\{0,1\}$ will be relaxed, however,
to the requirement $c_i(x,t)\in[0,1]$. This provides a convex restriction
that heuristically allows `mixtures' to form (by taking weak limits, say).

Writing $m_i(x,t)$ for the momentum of fluid $i$ at $(x,t)\in Q$, 
the action to be minimized is the total kinetic energy
\begin{equation}\label{d:Ktot}
K(c,m) = \sum_i \int_Q \hat K_{\hat\varrho_i}(c_i,m_i) \,dx\,dt \,,
\end{equation}
subject to three types of constraints---incompressibility, 
transport that conserves the total mass of each fluid,
and endpoint conditions. We require
\begin{equation}\label{c:masspt}
\sum_i c_i = 1 \qquad\mbox{a.e.\ in $Q$},
\end{equation}
\begin{equation}
\hat\varrho_i \D_t c_i +\nabla\cdot m_i = 0 
\qquad\mbox{in $Q$ for all $i$,}
\end{equation}
\begin{equation}\label{c:massi}
\frac{d}{dt} \int_\Omega \hat\varrho_ic_i =0
\qquad\mbox{for  $t\in [0,1]$ for all $i$},
\end{equation}
and fixed endpoint conditions at $t=0,1$:
\begin{equation}
c_i(x,0)= c_{i0}(x)\,,\qquad 
c_i(x,1)= c_{i1}(x)\,, 
\end{equation}
where $c_{i0},c_{i1}\in L^\infty(\Omega,[0,1])$ are prescribed for each $i$
in a fashion compatible with the constraints \eqref{c:masspt} and \eqref{c:massi}, satisfying
\begin{equation}\label{c:ICs}
\sum_i c_{i0} = \sum_i c_{i1} = 1 \quad\mbox{a.e.\ in $Q$},
\qquad
\int_{\Omega} c_{i0} = \int_{\Omega} c_{i1}\,,\quad i=0,1.
\end{equation}
For unmixed, classical flows, these data take the form of characteristic 
functions:
\begin{equation}\label{c:classical}
c_{i0}(x) = \one_{\Omega_{i,0}}\,, \qquad
c_{i1}(x) = \one_{\Omega_{i,1}}\,.
\end{equation}

The constraints above are more properly written and collected in the following weak form,
required to hold for all test functions $p$, $\phi_i$ in the space $C^0(Q)$ 
of continuous functions on $Q$, having $\D_t\phi_i$, $\nabla_x\phi_i$
also continuous on $Q$, for $i=0,1$:
\begin{align}
0 = &\int_Q p-\sum_i 
\int_Q 
\bigl( \left(p+\hat\varrho_i\D_t\phi_i\right)c_i
+ \nabla_x\phi_i\cdot m_i
\bigr)
\nonumber\\&
+\sum_i \hat\varrho_i \left(
 \int_{\Omega}  c_{i1}(x)\phi_i(x,1)\,dx
- \int_{\Omega}  c_{i0}(x)\phi_i(x,0)\,dx\right)\,.
\label{e:wkcon}
\end{align}

Let us now describe precisely the set $\calA_K$ of
functions $(c,m)$ that we take as admissible for the relaxed least-action principle.
We require $c_i\in L^\infty(Q,[0,1])$.
As we shall see below, it is natural to require that the path
\[
t\mapsto c_i(\cdot, t) \,dx
\]
is weak-$\star$ continuous into the space of 
 signed Radon measures on $\Omega$,
and that 
$m_i=\hat\varrho_i c_iv_i$ with 
$v_i\in L^2(c_i\,dx\,dt)$ if $\hat\varrho_i>0$.
Then the action in \eqref{d:Ktot} becomes
\begin{equation}
K(c,m) = \sum_i \int_Q \frac12\hat\varrho_i c_i |v_i|^2 \,.
\end{equation}
When $\hat\varrho_0=0$, we require $m_0=0$ a.e.,
since this condition is necessary to have
$K(c,m)<\infty$ in \eqref{d:Ktot}. In this case we have
\begin{equation}
K(c,m) = \int_Q \frac12\hat\varrho_1c_1|v_1|^2 \,,
\end{equation}
and the constraints on $c_0$ from  \eqref{e:wkcon} reduce simply
to the requirement that $c_0=1-c_1$. 

We let $\calA_K$ denote the set of functions $(c,m)$ that have the properties
required in the previous paragraph
and satisfy the weak-form constraints \eqref{e:wkcon}.
Our \textit{relaxed least-action two-fluid problem} is to find  $(\bar c,\bar m)\in\calA_K$
with
\begin{equation}\label{la:relax1}
K(\bar c, \bar m) = \inf_{(c,m)\in\calA_K} K(c,m).
\end{equation}

A formal description of classical  critical points of the action in \eqref{la:relax1},
subject to the constraints in \eqref{e:wkcon}, 
and with each $c_i$ a characteristic function of smoothly
deforming sets as in \eqref{d:Omega2},
will lead to classical Euler equations for two-fluid incompressible flow,
along the lines of our calculation in section~\ref{s:geo}, which applies 
in the case $\hat\varrho_0=0$.

%
%

\subsection{Wasserstein geodesics as minimizers of relaxed action}

We defer to Appendix~\ref{a:LAP} any further general discussion of the
relaxed least-action problem \eqref{la:relax1},
and focus now on the case $\hat\varrho_0=0$ 
corresponding to a fluid/vacuum mixture. 
We take $\hat\varrho_1=1$ for convenience. 
\begin{theorem}\label{t:W} Suppose $\hat\varrho_0=0$, and 
$\rho_0$, $\rho_1\colon\R^d\to[0,1]$ are measurable 
with compact support and equal integrals  in $(-L,L)^d$.
Then the relaxed least-action problem in \eqref{la:relax1}, 
with endpoint data determined by $c_{10}=\rho_0$, $c_{11}=\rho_1$, 
has a unique solution $(\bar c,\bar m)$ given inside $Q$ by
\begin{equation}\label{d:barcm}
\bar c_1 = \rho, \quad \bar m_1 = \rho v, 
\qquad \bar c_0 = 1-\rho, \quad \bar m_0=0,
\end{equation}
in terms of the 
displacement interpolant $(\rho,v)$ 
(described in section 2 and \eqref{e:pushrho},\eqref{e:pushv})
between the measures $\mu_0$ and $\mu_1$ with densities 
$\rho_0$ and $\rho_1$.
\end{theorem}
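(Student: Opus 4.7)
The plan is to reduce the relaxed two-fluid least-action problem in the degenerate case $\hat\varrho_0=0$ to the classical Benamou--Brenier problem for a single-fluid path, and then to observe that the constraint $c_1\in[0,1]$ is already saturated by the displacement interpolant. The crucial input from the preceding development is Lemma~\ref{lem:dens}, which guarantees $\rho\leq 1$ along Wasserstein particle paths.

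First I would unpack the weak constraints \eqref{e:wkcon} in the admissible set $\calA_K$. Varying $p$ alone gives $c_0+c_1=1$ a.e.\ in $Q$, and combined with the standing requirement $m_0=0$ in $\calA_K$ when $\hat\varrho_0=0$, what remains is the weak continuity equation $\partial_t c_1+\nabla\cdot m_1=0$ with a natural no-flux condition on $\partial\Omega\times[0,1]$ and endpoints $c_1(\cdot,0)=\rho_0$, $c_1(\cdot,1)=\rho_1$, while the action becomes
\[
K(c,m)=\frac12\int_Q c_1|v_1|^2\,dx\,dt, \qquad m_1=c_1 v_1.
\]

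For the lower bound, the Benamou--Brenier theorem asserts that for any such $(c_1,m_1)$ connecting $\rho_0$ and $\rho_1$,
\[
\int_Q c_1|v_1|^2\,dx\,dt \;\geq\; d_W(\rho_0,\rho_1)^2,
\]
regardless of the additional constraint $c_1\leq 1$. For the matching upper bound I would verify admissibility of the candidate $(\bar c,\bar m)$ in \eqref{d:barcm}: by the results in section~2 the displacement interpolant $(\rho,v)$ satisfies the continuity equation with the correct endpoints and, for $L$ chosen large enough, is compactly supported inside $\Omega$; the only nontrivial point is $\rho\leq 1$, which is precisely Lemma~\ref{lem:dens}. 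Identity \eqref{d:dwdef} then yields $K(\bar c,\bar m)=\frac12 d_W(\rho_0,\rho_1)^2$, matching the lower bound.

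For uniqueness, any minimizer $(c_1,m_1)$ must saturate the Benamou--Brenier inequality and hence define a constant-speed Wasserstein geodesic from $\mu_0$ to $\mu_1$. Since $\mu_0$ is absolutely continuous with $L^\infty$ density $\rho_0$, the Brenier map is $\mu_0$-a.e.\ unique, and so is the displacement interpolant it generates; see the uniqueness discussion following \eqref{e:tsmu}. This forces $(c_1,m_1)=(\rho,\rho v)$, so $(\bar c,\bar m)$ is the unique minimizer. The main technical subtlety I anticipate is verifying that the weak formulation \eqref{e:wkcon}, which permits test functions not vanishing on $\partial\Omega$, correctly encodes the no-flux boundary condition on $\partial\Omega\times[0,1]$ so that the reduction above indeed aligns $\calA_K$ with the natural Benamou--Brenier admissible class for measures supported inside $\Omega$.
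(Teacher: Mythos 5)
Your overall strategy matches the paper's proof: verify that the candidate $(\bar c,\bar m)$ from \eqref{d:barcm} is admissible, invoke the Benamou--Brenier characterization to see that it attains the infimum over the (wider) class of continuity-equation paths, and then use uniqueness of the constant-speed Wasserstein geodesic to conclude uniqueness of the minimizer.

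There is, however, one citation that does not carry the weight you place on it. You justify the key admissibility point $\rho\le 1$ by Lemma~\ref{lem:dens}, but that lemma is stated (and proved, via the eigenvalues of $\Hess\psi$ on the regular set) only for displacement interpolation between \emph{characteristic functions of bounded open sets}; its proof also uses that $\rho$ equals $1$ at both endpoints. In Theorem~\ref{t:W} the endpoint data $\rho_0,\rho_1$ are arbitrary measurable functions with values in $[0,1]$, for which neither the endpoint normalization nor the smoothness of the Brenier potential used in Lemma~\ref{lem:dens} is available. The paper closes exactly this gap with the last assertion of Proposition~\ref{p:Wgeostab}: one approximates $\rho_0,\rho_1$ by characteristic functions of open sets (Lemma~\ref{l:approxrho}), applies the bound $\bar\rho^k\le1$ from Lemma~\ref{lem:dens} to the approximating geodesics, and passes to the limit using weak-$\star$ stability of Wasserstein geodesics to conclude $0\le\rho\le1$ a.e. You should route the bound through that stability argument (or an equivalent displacement-convexity statement valid for general $L^\infty$ densities) rather than through Lemma~\ref{lem:dens} directly. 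The remainder of your argument, including the reduction of the constraints \eqref{e:wkcon} and the uniqueness step, is in line with the paper's proof.
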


\begin{proof} It is clear from the pushforward description of 
\eqref{e:pushrho}--\eqref{e:pushv} that 
$(\bar c,\bar m)$ as defined in \eqref{d:barcm}
belongs to the admissible set $\calA_K$, due to the facts
that (i) $0\le\rho\le 1$ by the last assertion of Proposition~\ref{p:Wgeostab} and
(ii) the support of $(\rho,v)$ is compactly contained in $\Omega$
due to \eqref{e:Tt} and \eqref{e:pushv}.
We then have, since $\hat\varrho_0=0$,
\[
K(\bar c,\bar m) =  \int_Q \frac12 \rho|v|^2 
=  \int_0^1\int_{\R^d} \frac12 \rho |v|^2\,dx\,dt
\]
because the pair $(\rho,v)$ is defined on $\R^d\times[0,1]$
and is zero outside $Q$.  But similarly,
for \textit{any} admissible pair $(c,m)\in \calA_K$,
if we extend $(c_1,v_1)$ by zero outside $Q$, we have
\[
K(c,m) =  \int_0^1\int_{\R^d} \frac12 c_1 |v_1|^2\,dx\,dt\,
\]
and $(c_1,v_1)$ determines a narrowly continuous path
of measures $t\mapsto \mu_t = c_1\,dx$ on $\R^d$
with $v\in L^2(\mu)$ that satisfies the continuity equation.
It is known that $(\rho,v)$ minimizes this expression 
over this wider class of paths of measures, due to the characterization
of Wasserstein distance by Benamou and Brenier \cite{BenBre00}, 
see \cite[Thm. 8.1]{Villani03}. 
By consequence we obtain that $(\bar c,\bar m)$ as defined by 
\eqref{d:barcm} is indeed a minimizer of the relaxed least-action problem
\eqref{la:relax1}.

 Because the Wasserstein minimizing path is unique (as discussed in section 2),
 it follows that any minimizer in \eqref{la:relax1} must be given
 as in \eqref{d:barcm}.
 \end{proof}

 \begin{proposition} The family of incompressible flows (concatenated Euler sprays)
 given for all small $\eps>0$ by  Theorem~\ref{t:complete}(b)
 determine a minimizing family $(c^\eps,m^\eps)$ 
 for the relaxed least-action principle \eqref{la:relax1} according to 
 \[
 c_1^\eps = \rho^\eps , \quad m_1^\eps = \rho^\eps v^\eps,
 \qquad c_0^\eps = 1-\rho^\eps , \quad m_0^\eps = 0.
 \]
 That is, $(c^\eps,m^\eps)\in\calA_K$ and $\lim_{\eps\to0} K(c^\eps,m^\eps)= \inf_{\calA_K} K(c,m)$. 
 \end{proposition}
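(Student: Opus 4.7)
The strategy is to establish two things separately: first, that the proposed family $(c^\eps, m^\eps)$ belongs to the admissible class $\calA_K$; and second, that its action tends to the infimum already identified in Theorem~\ref{t:W}. The second part is essentially immediate from the action bound in Theorem~\ref{t:complete}(b), so the bulk of the work is verifying admissibility.

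For admissibility, the pointwise constraints are trivially met: since $\rho^\eps$ is a shape density, we have $c_1^\eps = \rho^\eps\in\{0,1\}$ and $c_0^\eps = 1-\rho^\eps\in\{0,1\}$, so the partition of unity \eqref{c:masspt} holds. The momentum $m_1^\eps = \rho^\eps v^\eps$ has the required form with $v^\eps\in L^2(\rho^\eps\,dx\,dt)$, owing to the finite action bound $\calA^\eps\le d_W(\rho_0,\rho_1)^2+\eps$, and $m_0^\eps=0$. To verify the weak-form constraints \eqref{e:wkcon}, I would proceed in two stages. On each individual Euler spray in the countable chain produced by Theorem~\ref{t:complete}(b), Proposition~\ref{p:Esprayweak} provides the continuity equation in the distributional sense. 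Summing (or concatenating) over the chain and using matching boundary data at interior junctions yields \eqref{e:wkcon} for the reparametrized path, with boundary contributions only at $t=0$ and $t=1$. The weak-$\star$ limits $\rho_t^\eps\wksto\rho_0$ as $t\to0^+$ and $\rho_t^\eps\wksto\rho_1$ as $t\to1^-$ from Theorem~\ref{t:complete}(b) then identify these boundary contributions with those determined by $c_{10}=\rho_0$, $c_{11}=\rho_1$. The weak-$\star$ continuity of $t\mapsto c_1^\eps(\cdot,t)\,dx$ into signed Radon measures likewise holds within each spray and at all concatenation points (including the accumulation points where it reduces to the endpoint weak-$\star$ convergence).

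For convergence of the action, note that since $\hat\varrho_0=0$ forces the $i=0$ contribution in \eqref{d:Ktot} to vanish (as $m_0^\eps=0$ and $c_0^\eps\ge0$), and $\hat\varrho_1=1$,
\[
K(c^\eps,m^\eps) = \int_Q \tfrac12 c_1^\eps|v^\eps|^2\,dx\,dt = \tfrac12\calA^\eps \le \tfrac12 d_W(\rho_0,\rho_1)^2 + \tfrac{\eps}{2}.
\]
By Theorem~\ref{t:W} and the Benamou--Brenier identity \eqref{d:dwdef}, the infimum equals
\[
\inf_{\calA_K} K = K(\bar c,\bar m) = \int_0^1\!\!\int_{\R^d} \tfrac12 \rho|v|^2\,dx\,dt = \tfrac12 d_W(\rho_0,\rho_1)^2.
\]
Combining the inequality $K(c^\eps,m^\eps)\ge \inf_{\calA_K} K$ with the upper bound above gives $\lim_{\eps\to0} K(c^\eps,m^\eps)=\inf_{\calA_K}K$, which is the desired conclusion.

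The main obstacle is the careful verification of \eqref{e:wkcon} and the required weak-$\star$ continuity for a path built by countably many concatenations, because the chain accumulates at $t=0^+$ and $t=1^-$. One must ensure that test functions $\phi_i$ can be integrated against the contributions of the infinitely many compressed Euler sprays and that the telescoping of interior boundary terms leaves exactly the two endpoint terms seen in \eqref{e:wkcon}. This is handled by exploiting the summability $\sum_k\calA_k<\infty$ (so the tail contributions are negligible) together with the weak-$\star$ endpoint limits from Theorem~\ref{t:complete}(b).
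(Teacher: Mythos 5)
Your proof is correct and follows the route the paper intends: the paper in fact states this proposition without proof, as an immediate consequence of Theorem~\ref{t:complete}(b) (for the upper bound on the action), Theorem~\ref{t:W} (identifying the infimum as $\tfrac12 d_W(\rho_0,\rho_1)^2$), and Proposition~\ref{p:Esprayweak} (for the weak-form constraints). Your attention to the countable concatenation accumulating at $t=0^+$ and $t=1^-$ --- using summability of the segment actions together with the weak-$\star$ endpoint limits to telescope the interior boundary terms --- is precisely the detail that must be checked for admissibility, and your argument there is sound.
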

 
We remark that in case the endpoint data are classical (unmixed) as in \eqref{c:classical},
 we are not able to use
 the individual Euler sprays that we construct for the proof of Theorem~\ref{th2}
 to obtain a similar result. The reason is  
 that the target set $\Omega_{1,1}=\Omega_1$ is not hit exactly by our Euler sprays, 
 and this means that the corresponding concentration-momentum 
 pair $(c^\eps,m^\eps)\notin \calA_K$ because it would not satisfy the constraint \eqref{e:wkcon} as required. 
 We conjecture, however, that for small enough $\eps>0$, Euler sprays
 can be constructed that hit an arbitrary target shape $\Omega_1$ (up to a set of measure zero).
 If that is the case, these Euler sprays would 
 similarly provide a minimizing family for the relaxed least-action principle
 \eqref{la:relax1}.


\section{A Schmitzer-Schn\"orr-type shape distance without volume constraint}\label{s:discuss}

Theorem \ref{th1} establishes that restricting the Wasserstein metric
to paths of shapes of fixed volume does not provide 
a new notion of distance on the space of such shapes. 
Namely it shows that for paths in the space of shapes of fixed volume,
the infimum of the length of paths between two given shapes is the Wasserstein distance. 

\emph{Volume change.}
It is of interest to consider a more general space of shapes in order to compare
shapes of different volumes.  
In particular, the Schmitzer and Schn\"orr
\cite{SchSch2013}  considered a space that corresponds to the set of bounded,
simply connected domains in $\R^2$ with smooth boundary and arbitrary positive area.  
To each such shape $\Omega$ one associates as its corresponding \emph{shape measure}
the probability measure having uniform density on $\Omega$, denoted by  
\begin{equation}\label{e:shmeas}
    \calU_{\Omega} = \frac{1}{|\Omega|} \one_{\Omega}.  
\end{equation}

We consider here this same association between sets and shape measures, 
but allow for more general shapes.  Namely for fixed dimension $d$, 
let us consider shapes as bounded measurable subsets of $\R^d$ with positive volume.  
Let $\mathcal C$ be the set of all shape measures corresponding to such shapes.  
Thus $\calC$ is the set of all uniform probability distributions of bounded support.

One can formally consider $\mathcal C$ as a submanifold of the space of
probability measures of finite second moment, endowed with Wasserstein distance.  
Then we define a distance between shapes as we did in \eqref{d:ds}, requiring
\begin{equation}\label{e:actionC}
    d_{\mathcal C}(\Omega_0, \Omega_1)^2 = 
\inf {\mathcal A}\,,\qquad{\mathcal A}=\int_0^1\!\!\int_{\R^d}\, \rho|v|^2\,dx\,dt\,,
\end{equation}
where $\rho=(\rho_t)$ is now required to be a path of shape measures in $\mathcal C$,
with endpoints
\begin{equation}\label{e:ss-ends}
    \rho_0 = \calU_{\Omega_0}\,, \qquad \rho_1=\calU_{\Omega_1}\,,
\end{equation}
and transported according to the continuity equation \eqref{e:inf1a} 
with a velocity field $v\in L^2(\rho\,dx\,dt)$.

Because the characteristic-function restriction \eqref{e:inf1b} 
is replaced by the weaker requirement that $\rho_t$ has a uniform density,
for any two shapes of equal volume scaled to unity for convenience, it is clear that
\begin{equation}
    d_s(\Omega_0,\Omega_1) \ge d_{\calC}(\Omega_0,\Omega_1)  \ge d_W({\Omega_0},{\Omega_1}).
\end{equation}
Then as a direct consequence of Theorem \ref{th1}, we have
\begin{equation}
d_{\mathcal C}(\Omega_0, \Omega_1) = 
d_W \left({\Omega_0},{\Omega_1} \right). 
\end{equation}
By a minor modification of the arguments of section~\ref{s:dsdw}, in general we have the following.
\begin{theorem} \label{t:distC}
Let $\Omega_0$ and $\Omega_1$ be any two shapes of positive volume. Then
\[ 
d_C(\Omega_0,\Omega_1)=d_W(\calU_{\Omega_0},\calU_{\Omega_1}).
\] 
\end{theorem}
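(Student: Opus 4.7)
The plan is to prove the two inequalities separately: the lower bound $d_\calC(\Omega_0,\Omega_1)\ge d_W(\calU_{\Omega_0},\calU_{\Omega_1})$ by Benamou--Brenier, and the upper bound by a rescaling of the concatenated Euler spray construction of Theorem~\ref{t:complete}(b). The lower bound comes essentially for free, since any admissible path for the infimum \eqref{e:actionC} satisfies the continuity equation between the probability measures $\calU_{\Omega_0}$ and $\calU_{\Omega_1}$, and dropping the uniformity constraint $\rho_t\in\calC$ only enlarges the class of competitors, whose infimum equals $d_W(\calU_{\Omega_0},\calU_{\Omega_1})^2$ by \cite{BenBre}.

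For the upper bound, write $a=|\Omega_0|$, $b=|\Omega_1|$ and fix any $M\ge\max(1/a,1/b)$, so that the rescaled densities $\tilde\rho_i:=\calU_{\Omega_i}/M$ are measurable functions $\R^d\to[0,1]$ of compact support with common integral $1/M$. The next step is to apply Theorem~\ref{t:complete}(b) directly to the pair $\tilde\rho_0,\tilde\rho_1$: for each $\eps>0$ it produces a countable concatenation of Euler sprays $\eta^\eps=(\eta_t^\eps)_{t\in(0,1)}$, with transport velocity $v^\eps$, such that each $\eta_t^\eps=\one_{\Omega_t^\eps}$ is a characteristic function of a set $\Omega_t^\eps$ of constant volume $1/M$ (Euler sprays being volume-preserving), with $\eta_t^\eps\wkto\tilde\rho_i$ at the two endpoints, and with total action at most $d_W(\tilde\rho_0,\tilde\rho_1)^2+\eps$. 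The critical observation is that the rescaled path
\[
\rho^\eps_t:=M\eta_t^\eps=M\,\one_{\Omega_t^\eps}
\]
is precisely the uniform probability measure $\calU_{\Omega_t^\eps}\in\calC$ (since $|\Omega_t^\eps|=1/M$), inherits the same transport velocity $v^\eps$, and weak-star converges to $\calU_{\Omega_0}$ and $\calU_{\Omega_1}$ at $t=0,1$. Using the elementary scaling identity $d_W(cu,cv)^2=c\,d_W(u,v)^2$ for measures of equal finite mass (immediate by rescaling transport plans), which here gives $d_W(\tilde\rho_0,\tilde\rho_1)^2=(1/M)\,d_W(\calU_{\Omega_0},\calU_{\Omega_1})^2$, the action of $\rho^\eps$ satisfies
\[
\int_0^1\!\int_{\R^d}\rho_t^\eps|v^\eps|^2\,dx\,dt=M\!\int_0^1\!\int_{\R^d}\eta_t^\eps|v^\eps|^2\,dx\,dt\le d_W(\calU_{\Omega_0},\calU_{\Omega_1})^2+M\eps,
\]
and letting $\eps\to0$ completes the upper bound.

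The only point requiring attention, and the closest thing to a genuine obstacle, is the status of the endpoint conditions: Theorem~\ref{t:complete}(b) attains $\calU_{\Omega_i}$ only as weak-star limits as $t\to 0^+$ or $1^-$, not as pointwise values of $\rho^\eps_t$ at $t=0,1$. This matches the standard convention for least-action problems of Benamou--Brenier type, in which trajectories of measures are narrowly continuous, so no additional work should be needed; if a stricter reading of \eqref{e:actionC} were intended, the adjustment is easily absorbed into the $\eps$ budget by prepending and appending two arbitrarily short concatenations of Euler sprays which move exactly to and from the prescribed endpoints, precisely as the pieces $\rho^-$ and $\rho^+$ already do in the proof of Theorem~\ref{t:complete}(b).
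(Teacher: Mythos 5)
Your proof is correct and follows essentially the same route as the paper: both reduce the upper bound to the concatenated Euler sprays of Theorem~\ref{t:complete}(b) after a normalization that makes the endpoint densities take values in $[0,1]$, and then observe that the resulting fixed-volume characteristic functions are (up to the constant factor) exactly uniform distributions in $\calC$. The only cosmetic difference is the normalization used --- the paper dilates space so that $\min\{|\Omega_0|,|\Omega_1|\}\ge 1$, whereas you divide the densities by $M$ and scale back at the end --- and your handling of the endpoint values and the lower bound via Benamou--Brenier matches the paper's.
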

\begin{proof} By a simple scaling argument, we may assume 
$\min\{|\Omega_0|,|\Omega_1|\}\ge 1$ without loss of generality,
so that both $\rho_0,\rho_1\le 1$.
Then the concatenated Euler sprays provided by Theorem~\ref{t:complete}(b) 
supply a path of shape measures in $\mathcal C$ (actually shape densities), 
with action converging to $d_W(\calU_{\Omega_0},\calU_{\Omega_1})^2$.
\end{proof}

\emph{Smoothness.}
For dimension $d=2$, Theorem~\ref{t:distC} does not apply to describe distance 
in the space of shapes considered by Schmitzer and Schn\"orr in \cite{SchSch2013},
however, for as we have mentioned, they consider shapes to be 
bounded simply connected domains with smooth boundary. 

One point of view on this issue is that it is nowadays reasonable for many
purposes to consider `pixelated' images and shapes, 
made up of fine-grained discrete elements, to be valid approximations 
to smooth ones.  Thus the microdroplet constructions considered in this paper,
which fit with the mathematically natural regularity conditions 
inherent in the definition of Wasserstein distance, need not be thought unnatural
from the point of view of applications.

Nevertheless one may ask whether the infimum of path length in the 
space of smooth simply connected shapes is
still the Wasserstein distance, as in Theorem \ref{t:distC}.
Our proof of Theorem~\ref{th1} in Section \ref{s:dsdw} does not provide paths
in this space because the union of droplets is disconnected. 
However, the main mechanism by which we efficiently
transport mass, namely by ``dividing" the domain into small pieces
(droplets) which almost follow the Wasserstein geodesics, is still available. 
In particular, by creating many deep creases in the domain it might be effectively
`divided' into such pieces while still remaining connected and smooth. 
Thus we conjecture that even in the class of smooth sets considered in \cite{SchSch2013}, 
the geodesic distance is the Wasserstein distance 
between uniform distributions as in Theorem \ref{t:distC}.

\emph{Geodesic equations.} It is also interesting to compare our Euler droplet equations
from subsection~\ref{ss:geo} with the formal geodesic equations for smooth
critical paths of the action $\calA$ in the space $\calC$ of uniform distributions. 
These equations correspond to equation (4.12) of
Schmitzer and Schn\"orr in \cite{SchSch2013}. 

These geodesic equations may be derived in a manner almost identical to the
treatment in subsection~\ref{ss:geo} above. The principal difference is that 
due to \eqref{e:drho}, the divergence of the Eulerian velocity 
may be a nonzero function of time, constant in space:
\[
    \nabla\cdot v = c(t),
\]
and the same is true of virtual displacements $\tilde v$.
The variation of action now satisfies
\begin{equation}
    \frac{\delta\calA}2 = \left.\int_{\Omega_t} v\cdot \tilde v\,\rho\,dx\,\right|_{t=1}
-\int_0^1  \int_{\Omega_t}
(\D_t v+v\cdot\nabla v)\cdot \tilde v \,\rho\,dx\,dt.
\end{equation}
Now, the space of vector fields orthogonal to all constant-divergence fields on
$\Omega_t$ is the space of gradients $\nabla p$ such that $p$ vanishes on the
boundary and has average zero in $\Omega_t$, satisfying
\begin{equation}\label{e:ssp}
    p=0 \quad\mbox{on $\D\Omega_t$}, \qquad \int_{\Omega_t} p \,dx = 0.
\end{equation}
Because $\rho$ is spatially constant and $\tilde v$ can be (locally in time) arbitrary
with spatially constant divergence, necessarily $u = -(\D_t v+v\cdot\nabla v)$
is such a gradient.  The remaining considerations in section~\ref{ss:geo}
apply almost without change, and we conclude that $v=\nabla\phi$ where
\begin{equation}\label{e:ssphi}
    \D_t \phi + \frac12|\nabla\phi|^2 + p =0, \qquad \Delta\phi = c(t),
\end{equation}
where $c(t)$ is spatially constant in $\Omega_t$. 

These fluid equations differ from those in section~\ref{ss:geo} in that
$\phi$ gains one degree of freedom (a multiple of the solution of $\Delta
\phi=1$ in $\Omega_t$ with Dirichlet boundary condition) while the pressure $p$
loses one degree of freedom (as its integral is constrained).

They have elliptical droplet solutions given by displacement interpolation of
elliptical Wasserstein droplets as in subsection~\ref{ss:Wdrop}, 
because pressure vanishes and density is indeed spatially constant for these interpolants.
Because they are Wasserstein geodesics,  these particular solutions are also
length-minimizing geodesics in the shape space $\calC$. 

We remark that unlike in the case of Euler sprays, disjoint superposition will not
yield a geodesic in general. This is because the requirement of spatially uniform density
leads to a global coupling between all shape components.  It seems likely that
length-minimizing paths in $\calC$ will not generally exist even locally, but
we have no proof at present.

\appendix

\section{Some simple facts about subgradients}\label{a:subg}

For the convenience of readers, we include here proofs of a few facts about
subgradients that we use in section~\ref{s:spray} for the proof of Theorem~\ref{th2}.
The proofs are standard but may be 
hard for readers to extract from monographs on the subject.


\begin{proposition} \label{p:subg}
Let $H$ be a Hilbert space, and let $\varphi\colon H\to(-\infty,\infty]$ 
be convex, lower semi-continuous, and proper (i.e., somewhere finite).
Let $S(x) = \frac12\|x\|^2+\varphi(x)$.
Then:
\begin{itemize}
\item[i.]The subgradient $\D\varphi$ is a monotone operator.
\item[ii.] $\D S(x) = x + \D\varphi(x)$ for all $x\in H$.
\item[iii.] The range of $\D S$ is all of $H$. 
I.e., for all $y\in H$ there exists $x\in H$ and $z\in\D\varphi(x)$ 
such that $y= x+z$.
\end{itemize}
\end{proposition}
\begin{proof}
i. Given any $x,\hat x \in H$, 
$z\in\D\varphi(x)$, $\hat z\in\D\varphi(\hat x)$, 
by the definition of
$\D\varphi(x)$ and $\D\varphi(\hat x)$ respectively we have
\[
\varphi(\hat x) - \varphi(x) \ge \ip{z,\hat x - x}, \qquad
\varphi(x) - \varphi(\hat x) \ge \ip{\hat z,x -\hat x},
\]
whence
$ 0\le \ip{z-\hat z,x-\hat x}$. 
This proves $\D\varphi$ is monotone.

ii. 1. Let $z\in\D\varphi(x)$. We claim $x+z\in\D S(x)$.
Indeed, for all $h\in H$,
\[
\frac12\|x+h\|^2 + \varphi(x+h) \ge \frac12\|x\|^2 + \varphi(x)
+ \ip{z+x,h}\,.
\]

2. Suppose $z\notin\D\varphi(x)$. We claim $z+x\notin\D S(x)$. 
We know there exists $h\in H$ such that 
\[
t\inv(\varphi(x+th)-\varphi(x)) - \ip{z,h} < 0
\]
for $t=1$, hence for all $t\in(0,1]$ by convexity. 
Then for sufficiently small $t>0$ we can add $\frac12 t\|h\|^2$
to the left-hand side and conclude that for small $t>0$,
\[
\frac12 \|th\|^2 + \varphi(x+th) < \varphi(x) + \ip{z,th}
\]
whence
\[
\frac12 \|x+th\|^2 + \varphi(x+th) < \frac12\|x\|^2+\varphi(x) + \ip{z+x,th}
\]
Thus $z+x\notin\D S(x)$.

iii. Let $y\in H$ and define 
$\hat S(x)=S(x)-\ip{y,x}=\frac12\|x\|^2+\varphi(x)-\ip{y,x}$.
Due to our hypotheses, $\hat S$ has a minimum at some $x\in H$.
This implies that for all $h\in H$,
\[
\frac12\|x+h\|^2 +\varphi(x+h)\ge 
\frac12\|x\|^2 +\varphi(x)-\ip{y,h},
\]
which means that $y\in\D S(x)=x+\D\varphi(x)$.
\end{proof}

\section{\pdfx{$\tlp$}{TLp} convergence and stability of Wasserstein geodesics} \label{app:TLp}

Here we recall the notion of $\tlp$ convergence as introduced in \cite{GTS}, 
which provides a more precise comparison between 
Wasserstein geodesics than the notion of weak convergence does alone.
We recall some of the basic properties, establish new ones and use them to prove the 
convergence of Wasserstein geodesics considered as weak solutions to pressureless 
Euler equation.

The $\tlp$ metric provides a natural setting for comparing optimal transport maps between different probability measures. 
Let $\calP_p(\R^d)$ be the space of probability measures on $\R^d$ with finite $p$-th moments.
On the space $\tlp(\R^d)$, consisting of all ordered pairs 
$(\mu,g)$ where $ \mu\in\calP_p(\R^d)$ 
and $ g \in L^p(\mu)$, the metric is given as follows: 
For $1\le p<\infty$,
\[ d_{\tlp} (( \mu_0,   g_0), ( \mu_1,   g_1)) = 
\inf_{\pi \in \Pi( \mu_0,  \mu_1)}\left(
\int |x-y|^p + | g_0(x) -  g_1(y)|^p\  d \pi(x,y) \right)^{1/p}\,,
\]
and \[
d_{T\!L^\infty}((\mu_0,g_0),(\mu_1,g_1)) = 
\inf_{\pi \in \Pi( \mu_0,  \mu_1)}
\esssup_\pi (|x-y|+|g_0(x)-g_0(y)| ) \,,
\]
where $ \Pi( \mu_0,  \mu_1)$ is the set of transportation plans (couplings) 
between $ \mu_0$ and $ \mu_1$. 

The following result estabilishes a stability property for optimal transport maps,
as a consequence of a known general stability property for optimal plans.

\begin{proposition} \label{otstab}
Let $\mu,\mu_k\in \calP_p(\R^d)$ be probability measures absolutely continuous with respect to Lebesgue measure, and let $\nu,\nu_k\in \calP_p(\R^d)$, for each $k\in\N$.
Assume that 
\[ d_p(\mu_k, \mu) \to 0\; \te{ and } \;  d_p(\nu_k,\nu) \to 0 \; \te{ as }  k \to \infty. \]
Let $T_k$ and $T$ be the optimal transportation maps between $\mu_k$ and $\nu_k$, and $\mu$ and $\nu$, respectively. Then
\[ (\mu_k,T_k) \tlpto (\mu,T)  \te{  as } k \to \infty.\]
\end{proposition}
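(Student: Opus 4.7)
My plan is to reduce the desired $\tlp$ convergence to Wasserstein convergence of the associated graph measures on $\R^d\times\R^d$, and then invoke the classical stability theorem for optimal transport plans. First, because $T_k$ and $T$ are genuine maps (not general plans), the $\tlp$ distance $d_{\tlp}((\mu_k, T_k),(\mu, T))$ is comparable, up to a constant depending only on $p$, to the Wasserstein distance between the graph measures
\[
\pi_k := (\id \times T_k)_\sharp \mu_k,\qquad
\pi := (\id \times T)_\sharp \mu,
\]
regarded as elements of $\calP_p(\R^d\times\R^d)$. Indeed, any coupling $\gamma \in \Pi(\mu_k, \mu)$ lifts to a coupling between $\pi_k$ and $\pi$ via the push-forward $(x,y)\mapsto ((x,T_k(x)),(y,T(y)))$, and conversely the first-coordinate marginal of any coupling between $\pi_k$ and $\pi$ produces an admissible $\gamma$. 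Under this correspondence the $\tlp$ cost functional agrees with the Wasserstein cost on the product with respect to the equivalent norm $|(x,u)|_p=(|x|^p+|u|^p)^{1/p}$.

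Next I would invoke the standard stability theorem for optimal plans \cite[Theorem 5.20]{Villani09}: if $\mu_k$ and $\nu_k$ converge weakly to $\mu$ and $\nu$, then any weak limit point of the optimal plans $\pi_k \in \Pi(\mu_k,\nu_k)$ is optimal for $(\mu,\nu)$. Since $\mu$ is absolutely continuous with respect to Lebesgue measure, Brenier's theorem gives a unique optimal plan between $\mu$ and $\nu$, namely $\pi$, so the whole sequence $\pi_k$ converges weakly to $\pi$. To upgrade this to $d_p$ convergence on $\R^d\times\R^d$, I would observe that
\[
\int (|x|^p+|y|^p)\,d\pi_k \;=\; \int |x|^p\,d\mu_k \;+\; \int|y|^p\,d\nu_k,
\]
and the right-hand side converges to $\int (|x|^p+|y|^p)\,d\pi$ because $d_p$ convergence of the marginals implies convergence of their $p$-th moments. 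Weak convergence together with convergence of $p$-th moments is equivalent to $d_p$ convergence (see \cite[Theorem 6.9]{Villani09}), whence $d_p(\pi_k,\pi)\to 0$.

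Combining the two steps gives $d_{\tlp}((\mu_k,T_k),(\mu,T))\to 0$. The main obstacle is the stability step: the hypothesis of absolute continuity of $\mu$ is essential, since without it Brenier's uniqueness would fail and one could only extract subsequential limits along possibly distinct optimal plans. The graph-measure identification in the first step is routine once one recognizes the bijection between couplings of graph measures and couplings of the base measures; the upgrade from weak to $d_p$ convergence is likewise a standard consequence of moment conservation.
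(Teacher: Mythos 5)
Your proposal is correct and follows essentially the same route as the paper: the paper likewise identifies $\pi_k=(\id\times T_k)_\sharp\mu_k$ and $\pi=(\id\times T)_\sharp\mu$ as the optimal plans, invokes stability of optimal plans together with uniqueness (from absolute continuity of $\mu$) to get weak convergence, upgrades to $d_p(\pi_k,\pi)\to0$ via convergence of $p$-th moments and uniform integrability, and then transfers back to $\tlp$ by projecting an optimal coupling of $\pi$ and $\pi_k$ onto the base variables. Your upfront statement of the isometry between $d_{\tlp}((\mu_k,T_k),(\mu,T))$ and the Wasserstein distance of the graph measures is just a cleaner packaging of that final projection step.
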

\begin{proof}
The measures $\pi_k= (\id \times T_k)_\sharp \mu_k$ and $\pi = (\id \times T)_\sharp \mu$ are the optimal transportation plans between $\mu_k $ and $\nu_k$, and $\mu$ and $\nu$, respectively.  
By stability of optimal transport plans 
(Proposition 7.1.3 of \cite{AGS} or Theorem 5.20 in \cite{Villani09}) 
the sequence $\pi_k$ is precompact with respect to weak convergence and each of
its subsequential limits is an optimal transport plan between $\mu$ and $\nu$.
Since $\pi$ is the unique optimal transportation plan between $\mu$ and $\nu$
the sequence $\pi_k$ converges to $\pi$. Furthermore, by 
Theorem~5.11 of \cite{Santa} or Remark~7.1.11 of \cite{AGS},
 \begin{align*}
 \lim_{k \to \infty}  \int |x|^p + |y|^p \ d \pi_k(x,y)  & =
 \lim_{k \to \infty}     \int |x|^p\, d\mu_k + \int |y|^p\, d \nu_k \\
 & =   \int |x|^p\, d\mu + \int |y|^p\, d \nu  =     \int |x|^p + |y|^p\  d \pi(x,y).
 \end{align*}
By Lemma~5.1.7 of \cite{AGS}, it follows the $\pi_k$ have uniformly integrable $p$-th moments, 
therefore 
\[
d_p(\pi_k, \pi)\to 0 \quad\mbox{ as $k \to \infty$},
\]
 by  Proposition 7.1.5 in \cite{AGS}.
 Hence there exists (optimal) $\gamma_k \in \Pi(\pi, \pi_k)$ such that
 \begin{equation} \label{gamkto0}
 \int |x - \tilde x|^p + |y- \tilde y|^p \ d\gamma_k(x,y,\tilde x, \tilde y) \to 0 \quad \te{ as } k \to \infty.
 \end{equation}
Since $\pi$-almost everywhere $y = T(x)$ and $\pi_k$-almost everywhere $\tilde y = T_k(\tilde x)$ and the support $\supp\gamma_k$ of $\gamma_k$ is contained in 
$\supp\pi\times\supp\pi_k$,
we conclude that $\gamma_k$-almost everywhere $(x,y, \tilde x, \tilde y) = (x,T(x), \tilde x, T_k(\tilde x))$. Therefore 
 \begin{equation*} \label{loc13}
 \int |x - \tilde x|^p + |T(x)- T_k(\tilde x)|^p\  d\gamma_k(x,y,\tilde x, \tilde y) \to 0 \quad \te{ as } k \to \infty.
 \end{equation*}   
 Finally let $\theta_k$ be the projection of $\gamma_k$ to $(x, \tilde x)$ variables. By its definition 
 $\theta_k \in \Pi(\mu, \mu_k)$ and by above
 \begin{equation}\label{e:thetaconv}
  \int |x - \tilde x|^p + |T(x)- T_k(\tilde x)|^p\ d\theta_k(x, \tilde x) \to 0 \quad \te{ as } k \to \infty.
\end{equation}
 Thus $(\mu_k, T_k) \tlpto (\mu,T)$.
\end{proof}

We now consider the convergence of Wasserstein geodesics between measures 
$\mu_k$ and $\nu_k$ as in the Lemma \ref{otstab}, treating only the case $p=2$. 
We recall that particle paths along these geodesics are given by 
\[
T_{k,t}(x) = (1-t)x + t T_k(x)\,.
\]
The displacement interpolation between $\mu_k$ and $\nu_k$, and particle
velocities (in Eulerian variables) along the geodesics, are given by  
(cf.~\eqref{e:pushrho}--\eqref{e:pushv})
\[
\mu_{k,t} = {T_{k,t}}_\sharp \mu_k\,, \quad  v_{k,t}= (T_k-\id)\circ T_{k,t}\inv
\,, \quad t\in[0,1). 
\]
If $\nu_k$ is absolutely continuous with respect to Lebesgue measure, then $t=1$ is allowed.
We also recall that 
\[
\int |v_{k,t}(z)|^2 d \mu_{k,t}(z) = \int  |v_{k,0}(x)|^2 d \mu_k(x) = d_2^2(\mu_k, \nu_k).
\]
Furthermore it is straightforward to check that $t \mapsto (\mu_{k,t}, v_{k,t})$ 
is Lipschitz continuous into $T\!L^2(\R^d)$, satisfying
for $0\le s<t<1$ 
\begin{equation}
(t-s)d_2(\mu_k,\nu_k)=d_2(\mu_{k,t},\mu_{k,s}) \le 
d_{\tltwo}((\mu_{k,t},v_{k,t}),(\mu_{k,s},v_{k,s}))
\le (t-s)d_2(\mu_k,\nu_k).
\end{equation}

\begin{proposition} \label{tlpconvE}
Under the  assumptions of Proposition \ref{otstab} for the case $p=2$,
as $k\to\infty$ we have
\begin{align}
& \sup_{t\in[0,1]} d_2(\mu_{k,t},\mu_t) \to 0,\label{tlp1}\\
& \sup_{t\in[0,1)} d_{\tltwo}( (\mu_{k,t},v_{k,t}),(\mu_t,v_t)) \to 0,\label{tlp2}\\
& \sup_{t\in[0,1)} d_{T\!L^1}( (\mu_{k,t},v_{k,t}\otimes v_{k,t}),(\mu_t,v_t\otimes v_t)) \to 0.
\label{tlp3}
\end{align}
If the measures $\nu_k$ and $\nu$ are absolutely continuous
with respect to Lebesgue measure then the convergence in \eqref{tlp2} and \eqref{tlp3}
hold also for $t\in[0,1]$.
\end{proposition}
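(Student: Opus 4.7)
\medskip

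\noindent\textbf{Proof proposal for Proposition~\ref{tlpconvE}.}
The plan is to leverage the coupling $\theta_k \in \Pi(\mu,\mu_k)$ produced at the end of the proof of Proposition~\ref{otstab}, which satisfies
\[
\int |x-\tilde x|^2 + |T(x)-T_k(\tilde x)|^2 \, d\theta_k(x,\tilde x) \to 0
\quad\text{as } k\to\infty,
\]
and push it forward along the displacement interpolation maps. Specifically, for each $t\in[0,1]$ I set
\[
\theta_{k,t} := (T_t \times T_{k,t})_\sharp \theta_k,
\]
which, by the marginals of $\theta_k$ and the definitions $\mu_t=(T_t)_\sharp\mu$, $\mu_{k,t}=(T_{k,t})_\sharp\mu_k$, lies in $\Pi(\mu_t,\mu_{k,t})$. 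This single family of couplings, uniform in $t$, will drive all three conclusions.

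For \eqref{tlp1}, I would bound
\[
d_2(\mu_t,\mu_{k,t})^2 \le \int |T_t(x)-T_{k,t}(\tilde x)|^2 \, d\theta_k
\le 2\!\int |x-\tilde x|^2 + |T(x)-T_k(\tilde x)|^2 \, d\theta_k,
\]
using the convexity estimate $|(1-t)a+tb|^2\le 2|a|^2+2|b|^2$ applied to $a=x-\tilde x$, $b=T(x)-T_k(\tilde x)$. The right-hand side is independent of $t$ and tends to $0$, giving uniform convergence. For \eqref{tlp2}, I use the same coupling in the $\tltwo$ definition. The key observation is that $v_t(T_t(x))=T(x)-x$ and $v_{k,t}(T_{k,t}(\tilde x))=T_k(\tilde x)-\tilde x$, so
\[
|v_t(T_t(x)) - v_{k,t}(T_{k,t}(\tilde x))| \le |T(x)-T_k(\tilde x)| + |x-\tilde x|,
\]
which after squaring and integrating against $\theta_k$ again yields a bound by the quantity tending to $0$, uniformly in $t$. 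For \eqref{tlp3}, I would use the inequality $|a\otimes a - b\otimes b| \le (|a|+|b|)|a-b|$ applied to the velocities, then Cauchy--Schwarz against $\theta_{k,t}$: the factor $\int(|v_t|+|v_{k,t}|)^2 d\theta_{k,t}$ is controlled by $2d_2(\mu,\nu)^2 + 2d_2(\mu_k,\nu_k)^2$, which is bounded, while the factor $\int |v_t - v_{k,t}|^2 d\theta_{k,t}$ vanishes by the estimate just used.

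The hard part will be addressing why $v_t$ and $v_{k,t}$ are well-defined along the relevant portions of the path. For $t\in[0,1)$ the maps $T_t$ and $T_{k,t}$ are injective by the inequality \eqref{e:Ttbound}, so pulling back $v_t$ by $T_t\inv$ and $v_{k,t}$ by $T_{k,t}\inv$ makes sense $\mu_t$- and $\mu_{k,t}$-a.e., and the above identities hold $\theta_{k,t}$-a.e. The delicate case $t=1$ requires absolute continuity of $\nu_k$ and $\nu$: under this hypothesis, the Brenier theorem applied to the reversed transport problem gives that $T$ and each $T_k$ are invertible on sets of full measure, allowing the identification $v_1 = T\inv - \id$ and $v_{k,1} = T_k\inv - \id$ and thereby extending the same coupling argument to the closed interval $[0,1]$. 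Apart from this point, all steps reduce to straightforward integral estimates against the single coupling $\theta_k$, so uniformity in $t$ is automatic.
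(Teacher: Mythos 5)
Your argument is correct and essentially identical to the paper's: your coupling $(T_t\times T_{k,t})_\sharp\theta_k$ coincides with the paper's $(z_t\times z_t)_\sharp\gamma_k$ because $\gamma_k$ is concentrated on the graph points $(x,T(x),\tilde x,T_k(\tilde x))$, and all three estimates then proceed in the same way, with uniformity in $t$ coming from the fact that the bounding integrand is $t$-independent. (Only a cosmetic slip in the side remark on $t=1$: since $v(T_t(z),t)=T(z)-z$, the terminal velocity is $\id - T\inv$, not $T\inv - \id$.)
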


\begin{proof}
Let $\pi \in \Pi(\mu, \nu)$, $\pi_k \in \Pi(\mu_k, \nu_k)$, and 
$\gamma_k \in \Pi( \pi, \pi_k)$ be as in the proof of Proposition \ref{otstab}. 
Similarly to $\theta_k$, we define $\theta_{k,t} = (z_t \times  z_t)_{\sharp} \gamma_k$ 
where 
\[
z_t(x,y) = (1-t)x+ty \quad\mbox{and}\quad
(z_t \times z_t)(x,y,\tilde x, \tilde y) = (z_t(x,y), z_t(\tilde x, \tilde y)). 
\]
We note that $\theta_{k,t} \in \Pi(\mu_t, \mu_{k,t})$ and hence, for all $t\in[0,1]$,
\begin{align*}
d_2(\mu_t, \mu_{k,t})^2  & \leq \int |z-\tilde z|^2 d \theta_{k,t}(z,\tilde z) \\
& = \int |(1-t)(x-\tilde x) + t(y - \tilde y)|^2 d \gamma_k(x, y,  \tilde x, \tilde y) \\
& \leq 2 \int  |x - \tilde x|^2 + |y - \tilde y|^2 d  \gamma_k(x, y,  \tilde x, \tilde y) \,,
\end{align*}
which by \eqref{gamkto0} converges to $0$ as $k \to \infty$. 

We use the same coupling $\theta_{k,t}$ to compare the velocities.
Using that $\gamma_k$-almost everywhere $(x,y, \tilde x, \tilde y) = (x,T(x), \tilde x, T_k(\tilde x))$, for any $t\in[0,1)$ we obtain
\begin{align*}
& \int |v_t(z) - v_{k,t}(\tilde z)|^2\ d \theta_{k,t}(z, \tilde z)   \\
&\qquad  = 
  \int |v_t((1-t)x + ty) - v_{k,t}((1-t) \tilde x + t \tilde y)|^2\ d \gamma_k(x, y,  \tilde x, \tilde y) \\
&\qquad = \int |v(T_t(x)) - v_{k,t}(T_{k,t}(\tilde x))|^2\ d \theta_k(x, \tilde x) \\
&\qquad = \int |v_0(x)-v_{k,0}(\tilde x)|^2 \ d \theta_k(x, \tilde x) \\
&\qquad \leq 2 \int |x - \tilde x|^2 + |T(x) - T_k(\tilde x)|^2 \ d \theta_k(x, \tilde x)\,, 
\end{align*}
which converges to $0$ as $k \to \infty$, as in \eqref{e:thetaconv}.

The convergence in \eqref{tlp3} 
is a straightforward consequence through use of Cauchy-Schwarz inequalities.
\end{proof}

\begin{remark}
If the target measure $\nu_k$ is not 
absolutely continuous with respect to Lebesgue measure, 
then $T_k$ may fail to be invertible on the support
of $\nu_k$ and $(\mu_{k,t},v_{k,t})$ may fail to converge as $t\to1$
to some point in $\tltwo(\R^d)$ due to oscillations in velocity.
However, if $\nu_k$ and $\nu$ are 
absolutely continuous with respect to Lebesgue measure, 
then the curves 
$t \mapsto (\mu_{k,t}, v_{k,t})$, $t\mapsto (\mu_t,v_t)$ extend 
as continuous maps into $\tltwo$ for all $t\in[0,1]$, and the
uniform convergences in \eqref{tlp2}--\eqref{tlp3} hold on $[0,1]$.
\end{remark}

A number of properties of the $\tlp$ metric are established in Section 3 of \cite{GTS}
for measures supported in a fixed bounded set 
One useful characterization of $\tlp$-convergence in this case 
is stated in Proposition 3.12 of \cite{GTS}, which implies the following.

\begin{proposition}\label{p:ax}
Let $D\subset \R^d$ be open and bounded, and  
let $\mu$ and $\mu_k$ ($k\in\N$) be probability measures on $D$,
and suppose $\mu$ is absolutely continuous with respect to Lebesgue measure. 
Call a sequence of transport maps $(S_k)$ that push forward 
$\mu$ to $\mu_k$ (satisfying ${S_k}_\sharp\mu=\mu_k$) {\em stagnating} if 
\begin{equation}
\lim_{n\to\infty}\int_D |x-S_k(x)|\,d\mu(x) = 0 \,.
\end{equation}
Then the following are equivalent, for $1\le p<\infty$.
\begin{itemize}
\item[(i)] $(\mu_k,f_k)\tlpto (\mu,f)$ as $k\to\infty$.
\item[(ii)] $\mu_k$ converges weakly to $\mu$ and for every 
stagnating sequence $(S_k)$ we have
\begin{equation}\label{e:fnf}
\int_D |f(x)-f_k(S_k(x))|^p \,d\mu(x) \to 0 \quad\mbox{as $k\to\infty$}.
\end{equation}
\end{itemize}
Moreover, for (ii) to hold it suffices that \eqref{e:fnf} holds
for any \textit{single} stagnating sequence $(S_k)$.
\end{proposition}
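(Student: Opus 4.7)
The plan is to prove each implication of the equivalence separately, adapting the argument of Proposition~3.12 in \cite{GTS}.

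The direction (ii) $\Rightarrow$ (i) is straightforward: given a stagnating sequence $(S_k)$ with $f_k\circ S_k \to f$ in $L^p(\mu)$, I would use $\pi_k = (\id\times S_k)_\sharp\mu \in \Pi(\mu,\mu_k)$ as a competitor in the infimum defining $d_{\tlp}$, obtaining
\[
d_{\tlp}((\mu,f),(\mu_k,f_k))^p \le \int_D \bigl(|x - S_k(x)|^p + |f(x) - f_k(S_k(x))|^p\bigr)\,d\mu(x).
\]
Since $D$ is bounded, $|x - S_k(x)|^p \le \diam(D)^{p-1}|x - S_k(x)|$, so stagnation forces the first term to zero, while the assumed $L^p$-convergence handles the second.

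For (i) $\Rightarrow$ (ii), weak convergence of $\mu_k$ to $\mu$ is immediate from the definition of $\tlp$ by projecting any near-optimal plan realizing the $\tlp$-convergence. The substantive content is to verify $\|f - f_k\circ S_k\|_{L^p(\mu)} \to 0$ for every stagnating $S_k$. My plan is first to produce a single stagnating sequence $T_k$ with ${T_k}_\sharp\mu = \mu_k$ along which $\|f - f_k\circ T_k\|_{L^p(\mu)}\to 0$, then to extend this to all stagnating sequences by a density argument. For the first step, absolute continuity of $\mu$ enters essentially: by classical non-atomic optimal transport results (equality of the Kantorovich and Monge infima for absolutely continuous source measures), the infimum in the definition of $d_{\tlp}^p$ is unchanged if restricted to map-induced plans $(\id\times T)_\sharp\mu$ with ${T}_\sharp\mu = \mu_k$. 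Selecting $T_k$ near-optimal gives the desired sequence, which is stagnating because the $x$-part of the cost vanishes.

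For the second step (which also yields the Moreover assertion at once), fix a stagnating $S_k$ and choose $g\in C(\bar D)$ with $\|f-g\|_{L^p(\mu)}<\eps$. Splitting
\[
\|f - f_k\circ S_k\|_{L^p(\mu)} \le \|f-g\|_{L^p(\mu)} + \|g - g\circ S_k\|_{L^p(\mu)} + \|(g - f_k)\circ S_k\|_{L^p(\mu)},
\]
the middle term vanishes because $g$ is uniformly continuous on $\bar D$ and $S_k\to\id$ in $\mu$-measure (from stagnation on the bounded domain), while the last term equals $\|g-f_k\|_{L^p(\mu_k)} = \|g\circ T_k - f_k\circ T_k\|_{L^p(\mu)}$. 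A further triangle inequality bounds this in turn by $\|g\circ T_k - g\|_{L^p(\mu)} + \|g-f\|_{L^p(\mu)} + \|f - f_k\circ T_k\|_{L^p(\mu)}$, each of which is small by stagnation of $T_k$ together with uniform continuity of $g$, by the choice of $g$, or by the first step. Taking $\limsup_k$ and then $\eps\to 0$ finishes the argument. The Moreover assertion follows from exactly the same computation, using any given stagnating sequence for which \eqref{e:fnf} is assumed in place of $T_k$.

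The main obstacle I expect is the first step of (i) $\Rightarrow$ (ii): producing the auxiliary map-sequence $T_k$ that witnesses $L^p$-convergence of $f_k\circ T_k$. This is the only place where absolute continuity of $\mu$ is essential, via the reduction from general Kantorovich plans to Monge maps for non-atomic source measures; without this reduction the map-induced plans form a strictly smaller class than general plans in $\Pi(\mu,\mu_k)$ and the equivalence would break down.
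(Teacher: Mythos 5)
Your argument is correct and, in structure, reproduces the proof of Proposition~3.12 of \cite{GTS}; the paper itself does not prove this statement but imports it from that reference, so there is no internal proof to compare against. Two points in your write-up deserve one more line each. First, in (ii) $\Rightarrow$ (i) you must know that a stagnating sequence exists at all: this follows because weak convergence on the bounded set $D$ gives $d_p(\mu_k,\mu)\to 0$, and absolute continuity of $\mu$ then yields transport maps $T_k$ with $\int_D |x-T_k(x)|^p\,d\mu\to 0$. Second, the reduction from plans to maps in the first step of (i) $\Rightarrow$ (ii) cannot be obtained by applying the Monge--Kantorovich equivalence directly to the cost $|x-y|^p+|f(x)-f_k(y)|^p$, which is only Borel measurable; the clean route (and the one taken in \cite{GTS}) is to identify $d_{\tlp}((\mu,f),(\mu_k,f_k))$ with the $L^p$-Wasserstein distance between the graph measures $(\id\times f)_\sharp\mu$ and $(\id\times f_k)_\sharp\mu_k$, for which the cost is continuous and the source measure is non-atomic, so that near-optimal transport maps between the graphs exist and induce the desired maps $T_k$ on $D$. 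With these two remarks supplied, the rest of your argument --- the competitor plan $(\id\times S_k)_\sharp\mu$ for (ii) $\Rightarrow$ (i), and the double triangle inequality through a continuous $g$ that transfers convergence from the auxiliary sequence $T_k$ to an arbitrary stagnating sequence $S_k$, which simultaneously yields the \emph{Moreover} clause --- is complete and correct.
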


This result together with Proposition~\ref{tlpconvE} yields the following. 
\begin{proposition}\label{p:atime}
Make the same assumptions as in Proposition~\ref{tlpconvE}, and 
assume all measures $\mu_k$, $\mu$, $\nu_k$, $\nu$ are absolutely continuous
with respect to Lebesgue measure and have support in a bounded open set $D$. 
Then for any stagnating sequence of transport maps $(S_k)$ 
that push forward $\mu$ to $\mu_k$, with the notation
\[
S_{k,t} = T_{k,t}\circ S_k\circ T_t\inv
\]
the sequence $(S_{k,t})$ pushes forward $\mu_t$ to $\mu_{k,t}$ and is stagnating,
and as $k\to\infty$,
\begin{align}
 &\sup_{t\in[0,1]} \int |x-S_{k,t}(x)|^2\,d\mu_t(x) \to 0\,, \label{e:ttlp1}
\\& \sup_{t\in[0,1]}  \int|v_t(x)-v_{k,t}(S_{k,t}(x))|^2\,d\mu_t(x)  \to 0\,, \label{e:ttlp2}
\\& \sup_{t\in[0,1]} \int |(v_t\otimes v_t)(x)-(v_{k,t}\otimes v_{k,t})(S_{k,t}(x))|\,d\mu_t(x)  \to 0\,. \label{e:ttlp3}
\end{align}
\end{proposition}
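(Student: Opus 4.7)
The plan is to reduce everything to the change of variables $x = T_t(z)$, which converts integrals against $\mu_t$ into integrals against $\mu$ on which all error estimates decouple from $t$. First I would verify the pushforward identity for $S_{k,t}$: since $T_t^{-1}$ pushes $\mu_t$ back to $\mu$ (using injectivity of $T_t$ from \eqref{e:Ttbound}, extended to $t=1$ by absolute continuity of $\nu$), $S_k$ pushes $\mu$ to $\mu_k$, and $T_{k,t}$ pushes $\mu_k$ to $\mu_{k,t}$, so the composition works as claimed.

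Setting $x = T_t(z)$, the key algebraic identities are
\begin{align*}
x - S_{k,t}(x) &= (1-t)(z - S_k(z)) + t\bigl(T(z) - T_k(S_k(z))\bigr),\\
v_t(x) - v_{k,t}(S_{k,t}(x)) &= \bigl(T(z)-z\bigr) - \bigl(T_k(S_k(z)) - S_k(z)\bigr),
\end{align*}
where the second line uses $v_t\circ T_t = T-\id$, $v_{k,t}\circ T_{k,t} = T_k - \id$, and $S_{k,t}\circ T_t = T_{k,t}\circ S_k$. Squaring and using $(a+b)^2 \le 2a^2+2b^2$ yields, \emph{uniformly} in $t \in [0,1]$,
\[
\int |x-S_{k,t}|^2 d\mu_t \ + \ \int |v_t - v_{k,t}\circ S_{k,t}|^2 d\mu_t \ \le \ 4\bigl(E_k^{(1)} + E_k^{(2)}\bigr),
\]
with $E_k^{(1)} := \int |z-S_k(z)|^2 d\mu$ and $E_k^{(2)} := \int |T(z)-T_k(S_k(z))|^2 d\mu$.

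Both error terms then vanish as $k\to\infty$: $E_k^{(1)}\to 0$ because $(S_k)$ is stagnating and $|z-S_k(z)|\le\diam D$, so the $L^1$-convergence bootstraps to $L^2$. Statement $E_k^{(2)}\to 0$ is precisely the content of Proposition~\ref{otstab} combined with the characterization in Proposition~\ref{p:ax}, applied with $p=2$, $f = T$, $f_k = T_k$: the convergence $(\mu_k,T_k)\tltwoto(\mu,T)$ is equivalent to $E_k^{(2)}\to 0$ along any stagnating sequence. Stagnation of $(S_{k,t})$ itself is then a Cauchy-Schwarz consequence of \eqref{e:ttlp1}.

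Finally, \eqref{e:ttlp3} follows from \eqref{e:ttlp2} by the elementary bound $|a\otimes a - b\otimes b|\le (|a|+|b|)|a-b|$ and Cauchy-Schwarz; the factor $|v_t|+|v_{k,t}\circ S_{k,t}|$ is uniformly bounded since $T,T_k$ and $\id$ all map $D$ into $D$, so these velocities are bounded by $\diam D$. The only mild technical subtlety I anticipate is the handling of the change of variables at $t=1$, where injectivity of $T_t$ is not automatic from \eqref{e:Ttbound}---but this is covered by the hypothesis that $\nu$ and $\nu_k$ are absolutely continuous, which forces $T$ and $T_k$ to be $\mu$- and $\mu_k$-essentially invertible respectively.
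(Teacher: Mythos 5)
Your proof is correct and follows essentially the same route as the paper: change variables via $x=T_t(z)$, reduce to $t$-independent quantities, and control them with Propositions~\ref{otstab} and~\ref{p:ax}. The only (cosmetic) difference is in how uniformity in $t$ is extracted for \eqref{e:ttlp1} --- you bound $(1-t)(z-S_k(z))+t\bigl(T(z)-T_k(S_k(z))\bigr)$ uniformly by the two endpoint terms, whereas the paper first proves pointwise-in-$t$ convergence by applying the stability result to the interpolated maps $T_{k,t}$ and then observes that the integral is a quadratic polynomial in $t$, so pointwise convergence upgrades to uniform.
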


\begin{proof} First we note that indeed 
\[
\mu_{k,t}=(T_{k,t})_\sharp\mu_k = 
(T_{k,t}\circ S_k)_\sharp\mu = (S_{k,t})_\sharp\mu_t.
\]
Next, fix any $t\in[0,1]$.
Because $d_2(\mu_{k,t},\mu_t)\to0$ by \eqref{tlp1} and $T_{k,t}$ is the optimal
transport map pushing forward $\mu_k$ to $\mu_{k,t}$, by Proposition~\ref{otstab}
we have $d_2( (\mu_k,T_{k,t}),(\mu,T_t))\to0$. 
Now by Proposition~\ref{p:ax}, because $(T_t)_\sharp\mu=\mu_t$ we have
\begin{equation}\label{e:Ttkconv}
\int |x-S_{k,t}(x)|^2 \,d\mu_t(x) = \int |T_t(z)-T_{k,t}(S_k(z))|^2\,d\mu(z)\to0\,.
\end{equation}
We infer that $(S_{k,t})$ is stagnating and the convergence in \eqref{e:ttlp1}
holds pointwise in $t$. 
But now, the middle quantity in \eqref{e:Ttkconv} is a quadratic function of $t$,
so the uniform convergence in \eqref{e:ttlp1} holds.

Next, we note that the quantity in \eqref{e:ttlp2} is actually independent of $t$.
We have
\[
\int|v_t(x)-v_{k,t}(S_{k,t}(x))|^2\,d\mu_t(x) 
=\int |v_0(z)-v_{k,0}(S_k(z))|^2\,d\mu(z)
 \to 0\,,
\]
due to Proposition~\ref{p:ax}. The proof of \eqref{e:ttlp3} is similar.
\end{proof}

\section{Extended relaxed least-action principle for mixtures}
\label{a:LAP}
 
In this appendix we discuss an extension of the least-action principle
\eqref{la:relax1} which facilitates comparison with previous works
and provides an existence theorem.
The extension involves expanding the class of admissible concentration-momentum pairs, 
and is a kind of hybrid of Brenier's `homogenized vortex sheet'
formulation in \cite{Brenier97} and the variable-density formulation in \cite{LopesNP}
for geodesic flow in the diffeomorphism group. 
The extended formulation reduces, however, to the formulation in \eqref{la:relax1}
whenever the action is finite---see Proposition~\ref{p:reduce} below.
 
The formulations of \cite{Brenier97,Brenier99,LopesNP} were designed to make it possible
to establish existence of minimizers through  convex analysis. 
The key is to express kinetic energy and the weak-form constraints
\eqref{e:wkcon} through duality.
We start with the space $C^0(Q)$ of continuous functions on 
$Q = [-L,L]^d\times[0,1]$, whose dual is 
the space $\calM(Q)$ of signed Radon measures. The duality pairing is 
 \[
 \ip{F,c} = \int_Q F\,dc \qquad\mbox{for $F\in C^0(Q)$,\  $c\in \calM(Q)$}.
 \]
Similarly we write $\ip{G,m} = \int_Q G\cdot dm$ for 
$G\in C^0(Q)^d$ and $m\in \calM(Q)^d$. 
 
Next, let $\hat\varrho\ge0$ be a constant representing fluid density.
We let 
\[
\hat E = C^0(Q)\times C^0(Q)^d, \quad \hat E^*=\calM(Q)\times \calM(Q)^d,
\]
and define $\hat\calK_{\hat\varrho}: \hat E^*\to \R$ as the Legendre transform
of the indicator function $I_{P(\hat\varrho)}$ of the parabolic set
\begin{equation}\label{d:Prho}
P(\hat\varrho) = \{ (F,G)\in \hat E: 
F+\frac12\hat\varrho |G|^2 \le 0 \ \mbox{ in $Q$}\},
\end{equation}
given for $(c,m)\in \hat E^*$ by
\begin{equation}\label{d:hatKrho}
\hat\calK_{\hat\varrho}(c,m) = \sup_{(F,G)\in P(\hat\varrho)} \ip{F,c}+\ip{G,m}.
\end{equation}
(To compare with \cite[eq. (3.8)]{LopesNP} it may help to note 
$\hat\varrho P(\hat\varrho)=P(1)$ when $\hat\varrho>0$.)

The following result follows from  \cite[Proposition 3.4]{Brenier97} 
in the case $\hat\varrho>0$, and is straightforward to show in the case 
$\hat\varrho=0$, when the conclusion entails $m=0$.
\begin{proposition}\label{p:cmabscont} 
Let $\hat\varrho\ge0$, and let $(c,m)\in \hat E^*$. 
If $\hat K_{\hat\varrho}(c,m)<\infty$,
then $c$ is a nonnegative measure and $m$ is absolutely continuous with respect
to $c$, with Radon-Nikod\'ym derivative $\hat\varrho v$ where $v\in L^2(c)$, and
\[
\hat\calK_{\hat\varrho}(c,m) = \int_Q \frac12\hat\varrho |v|^2\,dc.
\]
\end{proposition}

Our reformulated least-action problem may now be specified, as follows.
Let $\hat\varrho_1>\hat\varrho_0\ge0$ and let $E=\hat E\times\hat E$. 
For $(c,m)\in E^*=\hat E^*\times\hat E^*$ we write 
\[
c = (c_0,c_1), \quad m = (m_0,m_1),
\]
and we define
\begin{equation}\label{d:calK}
\calK(c,m) = \sum_{i} \calK_{\hat\varrho_i}(c_i,m_i).
\end{equation}

We introduce the class $\hat\calA_{\calK}$ 
of admissible pairs $(c,m)\in E^*$ as follows, in order to enforce
essentially the same weak-form constraints \eqref{e:wkcon} as before.
Let us say that the test functions $p$, $\phi_1$, $\phi_2$ 
appearing in \eqref{e:wkcon} are {\em suitable} if 
$p\in C^0(Q)$ and $\phi_1$, $\phi_2\in C^1(Q)$.
We say $(c,m)\in E^*$ is admissible if \eqref{e:wkcon} 
is satisfied (with $c_i\,dx\,dt$, $m_i\,dx\,dt$ replaced respectively by $dc_i$, $dm_i$)
for all suitable $p$, $\phi_1$ and $\phi_2$.

The extended relaxed least-action problem is to find 
$(\hat c,\hat m)\in \hat\calA_\calK$ such that
\begin{equation}\label{la:relax2}
\calK(\hat c,\hat m) = \inf_{(c,m)\in \hat\calA_\calK} {\calK}(c,m). 
\end{equation}

This form of the relaxed least-action problem may be compared  
rather directly with the homogenized vortex sheet model of Brenier~\cite{Brenier97}
and with the variable-density model of Lopes Filho et al.~\cite{LopesNP}.
Both of these models deal with the endpoint problem for diffeomorphisms
rather than mass distributions as is done here. 
Brenier's model involves a sum over `phases' as in our model \eqref{d:calK},
but the fluid density in each phase is the same.  
The variable-density model of \cite{LopesNP} 
allows for mixture density (called $c$, corresponding to $\hat\varrho c$ here) 
to depend upon both Eulerian and Lagrangian spatial coordinates 
(called $x$ and $a$ respectively), similar to the formulation in \cite{Brenier99}.

In both \cite{Brenier97} and \cite{LopesNP} as well as related works for 
relaxed least-action principles formulated in a space of measures,
the existence of minimizers is established by using the Fenchel-Rockafellar
theorem from convex analysis. One expresses the objective function
corresponding to $\calK(c,m)$ as a sum of Legendre transforms of 
indicator functions of two sets,  one corresponding to kinetic energy
and another that accounts for the constraints in \eqref{e:wkcon}.

For the degenerate case $\hat\varrho_0=0$ 
that is most relevant to the main body of this paper,
existence of a unique minimizer follows from Theorem~\ref{t:W} above
and Proposition~\ref{p:reduce} below. 
When $\hat\varrho_0\ge0$ in general, we find the following.
\begin{theorem} Assume $\hat\varrho_1>\hat\varrho_0\ge0$, 
and the endpoint functions $c_{i0},c_{i1}\in L^\infty(\Omega,[0,1])$ 
satisfy \eqref{c:ICs}.  Assume $\hat\calA_{\calK}$ is nonempty.
Then a minimizer $(\hat c,\hat m)\in \hat\calA_{\calK}$ 
for \eqref{la:relax2} exists.
\end{theorem}
\begin{proof}
We define convex functions $\alpha,\beta\colon E\to \R\cup\{\infty\}$
as follows.  We let $\alpha$ be the indicator function $I_{P_\times}$
for the convex set 
\begin{equation}\label{d:aFR}
P_{\times} = P(\hat\varrho_0)\times P(\hat\varrho_1)\,.
\end{equation}
Then by \eqref{d:hatKrho}, the Legendre transform 
$\alpha^* = \calK$ as defined in \eqref{d:calK}.

Next we fix an admissible pair $(c_*,m_*)\in\hat\calA_{\calK}$,
so that \eqref{e:wkcon} holds for $(c_*,m_*)$,  
and define
\begin{equation}\label{d:bFR}
\beta(F,G) =  \ip{F,c_*}+\ip{G,m_*} + I_{W}(F,G),
\end{equation}
where 
and $I_W$ is the indicator function on the set $W\subset E$
consisting of all pairs $(F,G) = ((F_1,G_1),(F_2,G_2))$
such that {\em there exist} some suitable $p$, $\phi_1$, $\phi_2$
such that
\begin{equation}
F_j = p +\hat\varrho_j \D_t\phi_j\,, \qquad G_j = \nabla_x\phi_j,\quad j=1,2.
\end{equation}
With this definition, 
the Legendre transform $\beta^*:E^*\to\R\cup\{\infty\}$
is given by 
\[
\beta^*(c,m) = \sup_{(F,G)\in W} \ip{F,c-c_*}+\ip{G,m-m_*},
\]
and one verifies that 
\begin{equation}\label{e:betastar}
\beta^*(c,m) = \begin{cases}
0 & \mbox{if \eqref{e:wkcon} 
holds for all suitable $p$, $\phi_1$, $\phi_2$}, \\
+\infty &\mbox{if \eqref{e:wkcon} 
fails for some suitable $p$, $\phi_1$, $\phi_2$.}
\end{cases}
\end{equation}
That is, $\beta^*$ is the indicator function for the admissible class
$\hat\calA_{\calK}$. 

By consequence, the extended relaxed least
action problem \eqref{la:relax2} is equivalent to finding 
a minimizer $(\hat c,\hat m)\in E^*$ for the problem
\begin{equation}\label{la:relax*}
\min_{E^*} \alpha^*(c,m)+\beta^*(c,m).
\end{equation}
We may now obtain existence by invoking the Fenchel-Rockafellar 
theorem \cite[Thm.~1.12]{Brezis},
after noting that $\alpha$ is continuous at some point $(F,G)$ 
in the domain of both $\alpha$ and $\beta$, given by $F_i=-p$, $G_i=0$
where $p$ is a positive constant.
\end{proof}


We claim that the relaxed least-action problem \eqref{la:relax2}
always reduces to the previous problem \eqref{la:relax1}, due to the following fact.
\begin{proposition}\label{p:reduce} Suppose $(c,m)\in\hat\calA_\calK$ and $\calK(c,m)<\infty$. 
Then for some $(\bar c,\bar m)\in \calA_K$ we have  $\calK(c,m) = K(\bar c,\bar m)$
and
\begin{equation}\label{e:dcdm}
dc_i = \bar c_i\,dx\,dt, \quad dm_i = \bar m_i\,dx\,dt, \quad i=0,1.
\end{equation}
Consequently, the infimum in \eqref{la:relax2} is the same as that in \eqref{la:relax1}.
\end{proposition}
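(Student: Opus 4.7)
The plan is to start from the constraint in \eqref{e:wkcon} obtained by taking all $\phi_i\equiv 0$ and $p\in C^0(Q)$ arbitrary, which collapses to
\[
\int_Q p\,dx\,dt \;=\; \sum_i\int_Q p\,dc_i,
\]
thereby identifying the sum of measures $\sum_i c_i$ with the Lebesgue measure $dx\,dt$ on $Q$. Next, since $\calK(c,m)<\infty$ forces each summand $\calK_{\hat\varrho_i}(c_i,m_i)<\infty$, Proposition~\ref{p:cmabscont} gives that each $c_i$ is a nonnegative Radon measure on $Q$. Combined with the mass identity $\sum_i c_i = dx\,dt$, this forces $0\le c_i\le dx\,dt$, so $c_i\ll dx\,dt$ with Radon-Nikod\'ym density $\bar c_i\in L^\infty(Q,[0,1])$, and $\bar c_0+\bar c_1=1$ a.e.

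Proposition~\ref{p:cmabscont} also provides $v_i\in L^2(Q,c_i)$ with $dm_i=\hat\varrho_i v_i\,dc_i$. Setting $\bar m_i:=\hat\varrho_i\bar c_i v_i$, the pointwise bound $\bar c_i\le 1$ gives $|\bar m_i|^2=\hat\varrho_i^2\bar c_i^{\,2}|v_i|^2\le \hat\varrho_i^2\bar c_i|v_i|^2$, whose integral is $\hat\varrho_i^2\int_Q|v_i|^2\,dc_i<\infty$, so $\bar m_i\in L^2(Q,dx\,dt)$. A direct application of Proposition~\ref{p:cmabscont} then yields
\[
\calK(c,m) \;=\; \sum_i\int_Q\tfrac12\hat\varrho_i|v_i|^2\,dc_i \;=\; \sum_i\int_Q\tfrac12\hat\varrho_i\bar c_i|v_i|^2\,dx\,dt \;=\; K(\bar c,\bar m),
\]
which establishes both \eqref{e:dcdm} and the equality of actions. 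In the degenerate case $\hat\varrho_0=0$, Proposition~\ref{p:cmabscont} forces $m_0=0$, so the above reduces trivially for the $i=0$ summand, and $\bar c_0=1-\bar c_1$ comes from the mass identity.

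What remains is to verify that $(\bar c,\bar m)\in\calA_K$ as defined in the text preceding \eqref{la:relax1}. The representation \eqref{e:dcdm} lets us re-read the remaining content of \eqref{e:wkcon} (taking $p\equiv 0$ and varying $\phi_i$) as the distributional continuity equation
\[
\hat\varrho_i\D_t\bar c_i+\nabla\cdot\bar m_i=0 \quad\text{in }Q,
\]
together with the prescribed traces $\bar c_i(\cdot,0)=c_{i0}$ and $\bar c_i(\cdot,1)=c_{i1}$. The only genuinely nontrivial admissibility requirement is the narrow (weak-$\star$) continuity of $t\mapsto\bar c_i(\cdot,t)\,dx$. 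When $\hat\varrho_i>0$, this follows from $\bar c_i\in L^\infty$ and $\bar m_i\in L^2$ via the continuity equation by a standard argument (see for instance \cite[Lemma 8.1.2]{AGS}), which simultaneously identifies the traces at $t=0,1$ with $c_{i0}$, $c_{i1}$; when $\hat\varrho_0=0$, narrow continuity of $\bar c_0$ is inherited from $\bar c_1$ through $\bar c_0=1-\bar c_1$.

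Finally, the trivial inclusion $\calA_K\hookrightarrow\hat\calA_\calK$ via $(\bar c,\bar m)\mapsto(\bar c\,dx\,dt,\bar m\,dx\,dt)$ preserves the action, so together with the construction above the two infima in \eqref{la:relax1} and \eqref{la:relax2} coincide. The main technical obstacle is the weak-$\star$ time continuity verification in the previous paragraph; everything else is a direct consequence of Proposition~\ref{p:cmabscont} and the mass constraint extracted from \eqref{e:wkcon}.
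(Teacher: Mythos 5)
Your proposal is correct and follows essentially the same route as the paper: extract $\sum_i c_i = dx\,dt$ from \eqref{e:wkcon} with $\phi_i\equiv0$, invoke Proposition~\ref{p:cmabscont} for nonnegativity and the Radon--Nikod\'ym representation of $m_i$, deduce $\bar c_i\in L^\infty(Q,[0,1])$, and obtain weak-$\star$ continuity in time from Lemma~8.1.2 of \cite{AGS} before concluding via the embedding $\calA_K\hookrightarrow\hat\calA_\calK$. The only cosmetic difference is that the paper also records the trivial remark that $\inf\emptyset=+\infty$ covers the case of infinite action, which your argument implicitly handles.
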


\begin{proof} To prove this result, we first invoke Proposition~\ref{p:cmabscont}
to infer that $c_i$ is a nonnegative measure and $m_i$ is absolutely continuous
with respect to $c_i$ for $i=0,1$.  Next we note that $\sum_i c_i=1$ by taking 
$\phi_i=0$ and $p$ arbitrary in \eqref{e:wkcon}. Hence the representation
in \eqref{e:dcdm} holds with $\bar c_i\in L^\infty(Q,[0,1])$ and
$m_i=\hat\varrho_i \bar c_iv_i$ with $v_i\in L^2(\bar c_i\,dx\,dt)$.

Finally, we claim $t\mapsto \bar c_i(\cdot,t)$ is weak-$\star$ continuous
into $\calM(Q)$. By choosing $p=0$ and $\phi_i$ to depend only on $t$ in \eqref{e:wkcon}
we infer that $\int_{\Omega} \bar c_i(x,t)\,dx$ is independent of $t$. 
Thus, because $\Omega$ is compact, we can invoke Lemma~8.1.2 of \cite{AGS} to 
conclude that $t\mapsto \bar c_i(\cdot,t)$ is narrowly, hence weak-$\star$, continuous.

It is clear that the infimum in \eqref{la:relax1} is greater or equal to that in \eqref{la:relax2},
because the admissible set $\calA_K$ is naturally embedded in $\hat\calA_\calK$,
and the two are equal if either is finite.  Recalling that $\inf \emptyset=+\infty$,
equality follows in general. 
\end{proof}

\begin{remark}\label{r:consist}
As a last comment, we note that for variable-density flow with strictly positive
density, the relaxed least-action problem studied by Lopes et al.~\cite{LopesNP}
was shown to be \textit{consistent} with the classical Euler equations, 
in the sense that  classical solutions of the Euler system induce weak solutions 
of relaxed Euler equations, and for sufficiently short time 
the induced solution is the unique minimizer of the relaxed problem. 
In the case that we consider with $\hat\varrho_0=0$, however, 
this consistency property cannot hold in general when the space dimension
$d>1$, for the reason that in general we can expect the Wasserstein density 
$\rho<1$ in Theorem~\ref{t:W} (see Theorem~\ref{th4}), while necessarily $\rho\in\{0,1\}$ for any 
classical solution of the incompressible Euler equations.
\end{remark}

\section*{Acknowledgements}
The authors thank Yann Brenier for enlightening discussions and generous hospitality.
Thanks also to Matt Thorpe for the computation of the optimal transport map 
appearing in Figure~\ref{fig:vitali}, and to Yue Pu for careful reading and corrections.


This material is based upon work supported by 
the National Science Foundation under 
NSF Research Network Grant no.\ RNMS11-07444 (KI-Net)
and partially supported by the Center for Nonlinear Analysis (CNA)
under National Science Foundation PIRE Grant no.\ OISE-0967140.
The first author was partially supported by the National Science Foundation
with grant DMS 1514826. 
The second author was partially supported by the National Science Foundation
with grants DMS 1211161 and DMS 1515400, 
and by the Simons Foundation under grant 395796. 
The third author was partially supported by the National Science Foundation
with grants CCF 1421502 and DMS 1516677. 


\bibliographystyle{siam}
\bibliography{eulerrefs}

\def\cprime{$'$}
\begin{thebibliography}{10}

\bibitem{AmbrosioFigalli09}
{\sc L.~Ambrosio and A.~Figalli}, {\em Geodesics in the space of
  measure-preserving maps and plans}, Arch. Ration. Mech. Anal., 194 (2009),
  pp.~421--462.

\bibitem{AGuser}
{\sc L.~Ambrosio and N.~Gigli}, {\em A user's guide to optimal transport}, in
  Modelling and optimisation of flows on networks, vol.~2062 of Lecture Notes
  in Math., Springer, Heidelberg, 2013, pp.~1--155.

\bibitem{AGS}
{\sc L.~Ambrosio, N.~Gigli, and G.~Savar{\'e}}, {\em Gradient flows in metric
  spaces and in the space of probability measures}, Lectures in Mathematics ETH
  Z\"urich, Birkh\"auser Verlag, Basel, second~ed., 2008.

\bibitem{Arnold66}
{\sc V.~Arnold}, {\em Sur la g\'eom\'etrie diff\'erentielle des groupes de
  {L}ie de dimension infinie et ses applications \`a l'hydrodynamique des
  fluides parfaits}, Ann. Inst. Fourier (Grenoble), 16 (1966), pp.~319--361.

\bibitem{BMTY05}
{\sc M.~F. Beg, M.~I. Miller, A.~Trouv{\'e}, and L.~Younes}, {\em Computing
  large deformation metric mappings via geodesic flows of diffeomorphisms},
  Int. J. Comput. Vis., 61 (2005), pp.~139--157.

\bibitem{BenBre00}
{\sc J.-D. Benamou and Y.~Brenier}, {\em A computational fluid mechanics
  solution to the {M}onge-{K}antorovich mass transfer problem}, Numer. Math.,
  84 (2000), pp.~375--393.

\bibitem{Braides}
{\sc A.~Braides}, {\em {$\Gamma$}-convergence for beginners}, vol.~22 of Oxford
  Lecture Series in Mathematics and its Applications, Oxford University Press,
  Oxford, 2002.

\bibitem{Brenier89}
{\sc Y.~Brenier}, {\em The least action principle and the related concept of
  generalized flows for incompressible perfect fluids}, J. Amer. Math. Soc., 2
  (1989), pp.~225--255.

\bibitem{Bre91}
{\sc Y.~Brenier}, {\em Polar factorization and monotone rearrangement of
  vector-valued functions}, Comm. Pure Appl. Math., 44 (1991), pp.~375--417.

\bibitem{Brenier92}
{\sc Y.~Brenier}, {\em The dual least action problem for an ideal,
  incompressible fluid}, Arch. Rational Mech. Anal., 122 (1993), pp.~323--351.

\bibitem{Brenier97}
{\sc Y.~Brenier}, {\em A homogenized model for vortex sheets}, Arch. Rational
  Mech. Anal., 138 (1997), pp.~319--353.

\bibitem{Brenier99}
\leavevmode\vrule height 2pt depth -1.6pt width 23pt, {\em Minimal geodesics on
  groups of volume-preserving maps and generalized solutions of the {E}uler
  equations}, Comm. Pure Appl. Math., 52 (1999), pp.~411--452.

\bibitem{Brenier2008}
\leavevmode\vrule height 2pt depth -1.6pt width 23pt, {\em Generalized
  solutions and hydrostatic approximation of the {E}uler equations}, Phys. D,
  237 (2008), pp.~1982--1988.

\bibitem{Brenier2013}
\leavevmode\vrule height 2pt depth -1.6pt width 23pt, {\em Remarks on the
  minimizing geodesic problem in inviscid incompressible fluid mechanics},
  Calc. Var. Partial Differential Equations, 47 (2013), pp.~55--64.

\bibitem{BOS11}
{\sc Y.~Brenier, F.~Otto, and C.~Seis}, {\em Upper bounds on coarsening rates
  in demixing binary viscous liquids}, SIAM J. Math. Anal., 43 (2011),
  pp.~114--134.

\bibitem{Brezis}
{\sc H.~Brezis}, {\em Functional analysis, {S}obolev spaces and partial
  differential equations}, Universitext, Springer, New York, 2011.

\bibitem{BrMiMu14}
{\sc M.~Bruveris, P.~W. Michor, and D.~Mumford}, {\em Geodesic completeness for
  {S}obolev metrics on the space of immersed plane curves}, Forum Math. Sigma,
  2 (2014), pp.~e19, 38.

\bibitem{BrVi17}
{\sc M.~Bruveris and F.-X. Vialard}, {\em On completeness of groups of
  diffeomorphisms}, J. Eur. Math. Soc. (JEMS), 19 (2017), pp.~1507--1544.

\bibitem{caff91}
{\sc L.~A. Caffarelli}, {\em Some regularity properties of solutions of {M}onge
  {A}mp\`ere equation}, Comm. Pure Appl. Math., 44 (1991), pp.~965--969.

\bibitem{CoutShko2007}
{\sc D.~Coutand and S.~Shkoller}, {\em Well-posedness of the free-surface
  incompressible {E}uler equations with or without surface tension}, J. Amer.
  Math. Soc., 20 (2007), pp.~829--930.

\bibitem{CoutShko2010}
\leavevmode\vrule height 2pt depth -1.6pt width 23pt, {\em A simple proof of
  well-posedness for the free-surface incompressible {E}uler equations},
  Discrete Contin. Dyn. Syst. Ser. S, 3 (2010), pp.~429--449.

\bibitem{figall_dephil_BAMS}
{\sc G.~De~Philippis and A.~Figalli}, {\em The {M}onge-{A}mp\`ere equation and
  its link to optimal transportation}, Bull. Amer. Math. Soc. (N.S.), 51
  (2014), pp.~527--580.

\bibitem{dPF2015}
\leavevmode\vrule height 2pt depth -1.6pt width 23pt, {\em Partial regularity
  for optimal transport maps}, Publ. Math. Inst. Hautes \'Etudes Sci., 121
  (2015), pp.~81--112.

\bibitem{DSbook}
{\sc N.~Dunford and J.~T. Schwartz}, {\em Linear {O}perators. {I}. {G}eneral
  {T}heory}, With the assistance of W. G. Bade and R. G. Bartle. Pure and
  Applied Mathematics, Vol. 7, Interscience Publishers, Inc., New York;
  Interscience Publishers, Ltd., London, 1958.

\bibitem{DuGrMi98}
{\sc P.~Dupuis, U.~Grenander, and M.~I. Miller}, {\em Variational problems on
  flows of diffeomorphisms for image matching}, Quart. Appl. Math., 56 (1998),
  pp.~587--600.

\bibitem{EbinMarsden}
{\sc D.~G. Ebin and J.~Marsden}, {\em Groups of diffeomorphisms and the motion
  of an incompressible fluid.}, Ann. of Math. (2), 92 (1970), pp.~102--163.

\bibitem{EvansGariepy}
{\sc L.~C. Evans and R.~F. Gariepy}, {\em Measure theory and fine properties of
  functions}, Studies in Advanced Mathematics, CRC Press, Boca Raton, FL, 1992.

\bibitem{Fig10}
{\sc A.~Figalli}, {\em Regularity properties of optimal maps between nonconvex
  domains in the plane}, Comm. Partial Differential Equations, 35 (2010),
  pp.~465--479.

\bibitem{FK2010}
{\sc A.~Figalli and Y.-H. Kim}, {\em Partial regularity of {B}renier solutions
  of the {M}onge-{A}mp\`ere equation}, Discrete Contin. Dyn. Syst., 28 (2010),
  pp.~559--565.

\bibitem{FuchsEtal}
{\sc M.~Fuchs, B.~J{\"u}ttler, O.~Scherzer, and H.~Yang}, {\em Shape metrics
  based on elastic deformations}, J. Math. Imaging Vision, 35 (2009),
  pp.~86--102.

\bibitem{GanMacshape}
{\sc W.~Gangbo and R.~J. McCann}, {\em Shape recognition via {W}asserstein
  distance}, Quart. Appl. Math., 58 (2000), pp.~705--737.

\bibitem{GTS}
{\sc N.~{Garc{\'i}a Trillos} and D.~Slep\v{c}ev}, {\em {Continuum Limit of
  Total Variation on Point Clouds}}, Arch. Ration. Mech. Anal., 220 (2016),
  pp.~193--241.

\bibitem{Holm2013}
{\sc F.~Gay-Balmaz, D.~D. Holm, and T.~S. Ratiu}, {\em Geometric dynamics of
  optimization}, Commun. Math. Sci., 11 (2013), pp.~163--231.

\bibitem{Grenander_Miller_CA}
{\sc U.~Grenander and M.~I. Miller}, {\em Computational anatomy: An emerging
  discipline}, Quart. Appl. Math., LVI (1998), pp.~617--694.

\bibitem{HZTA}
{\sc S.~Haker, L.~Zhu, A.~Tannembaum, and S.~Angenent}, {\em Optimal mass
  transport for registration and warping}, Int. J. Comput. Vis., 60 (2004),
  pp.~225--240.

\bibitem{Holm2009}
{\sc D.~D. Holm, A.~Trouv{\'e}, and L.~Younes}, {\em The {E}uler-{P}oincar\'e
  theory of metamorphosis}, Quart. Appl. Math., 67 (2009), pp.~661--685.

\bibitem{Jost}
{\sc J.~Jost}, {\em Riemannian geometry and geometric analysis}, Universitext,
  Springer, Heidelberg, sixth~ed., 2011.

\bibitem{Knosmi84}
{\sc M.~Knott and C.~S. Smith}, {\em On the optimal mapping of distributions},
  J. Optim. Theory Appl., 43 (1984), pp.~39--49.

\bibitem{Lindblad}
{\sc H.~Lindblad}, {\em Well-posedness for the motion of an incompressible
  liquid with free surface boundary}, Ann. of Math. (2), 162 (2005),
  pp.~109--194.

\bibitem{DLPv1}
{\sc J.-G. Liu, R.~L. Pego, and D.~Slep{\v c}ev}, {\em Euler sprays and
  {W}asserstein geometry of the space of shapes}.
\newblock arXiv:1604.03387v1.

\bibitem{LopesNP}
{\sc M.~C. Lopes~Filho, H.~J. Nussenzveig~Lopes, and J.~C. Precioso}, {\em
  Least action principle and the incompressible {E}uler equations with variable
  density}, Trans. Amer. Math. Soc., 363 (2011), pp.~2641--2661.

\bibitem{McCann97}
{\sc R.~J. McCann}, {\em A convexity principle for interacting gases}, Adv.
  Math., 128 (1997), pp.~153--179.

\bibitem{MicMum05}
{\sc P.~W. Michor and D.~Mumford}, {\em Vanishing geodesic distance on spaces
  of submanifolds and diffeomorphisms}, Doc. Math., 10 (2005), pp.~217--245.

\bibitem{MicMum06}
\leavevmode\vrule height 2pt depth -1.6pt width 23pt, {\em Riemannian
  geometries on spaces of plane curves}, J. Eur. Math. Soc. (JEMS), 8 (2006),
  pp.~1--48.

\bibitem{Mignot}
{\sc F.~Mignot}, {\em Contr\^ole dans les in\'equations variationelles
  elliptiques}, J. Functional Analysis, 22 (1976), pp.~130--185.

\bibitem{Otto01}
{\sc F.~Otto}, {\em The geometry of dissipative evolution equations: the porous
  medium equation}, Comm. Partial Differential Equations, 26 (2001),
  pp.~101--174.

\bibitem{RubTomGui}
{\sc Y.~Rubner, C.~Tomassi, and L.~J. Guibas}, {\em The earth mover's distance
  as a metric for image retrieval}, Int. J. Comput. Vis., 40 (2000),
  pp.~99--121.

\bibitem{RuWi13}
{\sc M.~Rumpf and B.~Wirth}, {\em Discrete geodesic calculus in shape space and
  applications in the space of viscous fluidic objects}, SIAM J. Imaging Sci.,
  6 (2013), pp.~2581--2602.

\bibitem{Santa}
{\sc F.~Santambrogio}, {\em Optimal transport for applied mathematicians},
  Progress in Nonlinear Differential Equations and their Applications, 87,
  Birkh\"auser/Springer, Cham, 2015.
\newblock Calculus of variations, PDEs, and modeling.

\bibitem{SchSch2013}
{\sc B.~Schmitzer and C.~Schn{\"o}rr}, {\em Contour manifolds and optimal
  transport}.
\newblock arXiv:1309.2240, 2013.

\bibitem{SchSch15}
\leavevmode\vrule height 2pt depth -1.6pt width 23pt, {\em Globally optimal
  joint image segmentation and shape matching based on {W}asserstein modes}, J.
  Math. Imaging Vision, 52 (2015), pp.~436--458.

\bibitem{Shnirelman94}
{\sc A.~I. Shnirel{\cprime}man}, {\em Generalized fluid flows, their
  approximation and applications}, Geom. Funct. Anal., 4 (1994), pp.~586--620.

\bibitem{Thompson}
{\sc D.~W. Thompson}, {\em On Growth and Form}, Cambridge University Press,
  Cambridge, 1917.

\bibitem{Trouv95}
{\sc A.~Trouv{\'e}}, {\em Action de groupe de dimension infinie et
  reconnaissance de formes}, C. R. Acad. Sci. Paris S\'er. I Math., 321 (1995),
  pp.~1031--1034.

\bibitem{Villani03}
{\sc C.~Villani}, {\em Topics in optimal transportation}, vol.~58 of Graduate
  Studies in Mathematics, American Mathematical Society, Providence, RI, 2003.

\bibitem{Villani09}
\leavevmode\vrule height 2pt depth -1.6pt width 23pt, {\em Optimal transport},
  vol.~338 of Grundlehren der Mathematischen Wissenschaften [Fundamental
  Principles of Mathematical Sciences], Springer-Verlag, Berlin, 2009.
\newblock Old and new.

\bibitem{WOSLCR}
{\sc W.~Wang, J.~A. Ozolek, D.~Slep\v{c}ev, A.~B. Lee, C.~Chen, and G.~K.
  Rohde}, {\em An optimal transportation approach for nuclear structure-based
  pathology}, IEEE Transacions on Medical Imaging, 30 (2011), pp.~621 -- 631.

\bibitem{LOT}
{\sc W.~Wang, D.~Slep\v{c}ev, S.~Basu, J.~A. Ozolek, and G.~K. Rohde}, {\em A
  linear optimal transportation framework for quantifying and visualizing
  variations in sets of images}, International Journal of Computer Vision, 101
  (2013), pp.~254--269.

\bibitem{WBRS11}
{\sc B.~Wirth, L.~Bar, M.~Rumpf, and G.~Sapiro}, {\em A continuum mechanical
  approach to geodesics in shape space}, Int. J. Comput. Vis., 93 (2011),
  pp.~293--318.

\bibitem{Wu97}
{\sc S.~Wu}, {\em Well-posedness in {S}obolev spaces of the full water wave
  problem in {$2$}-{D}}, Invent. Math., 130 (1997), pp.~39--72.

\bibitem{Wu99}
\leavevmode\vrule height 2pt depth -1.6pt width 23pt, {\em Well-posedness in
  {S}obolev spaces of the full water wave problem in 3-{D}}, J. Amer. Math.
  Soc., 12 (1999), pp.~445--495.

\bibitem{Youn98}
{\sc L.~Younes}, {\em Computable elastic distances between shapes}, SIAM J.
  Appl. Math., 58 (1998), pp.~565--586 (electronic).

\bibitem{Younes-book}
\leavevmode\vrule height 2pt depth -1.6pt width 23pt, {\em Shapes and
  diffeomorphisms}, vol.~171 of Applied Mathematical Sciences, Springer-Verlag,
  Berlin, 2010.

\bibitem{YMSM_metric_shape}
{\sc L.~Younes, P.~W. Michor, J.~Shah, and D.~Mumford}, {\em A metric on shape
  space with explicit geodesics}, Atti Accad. Naz. Lincei Cl. Sci. Fis. Mat.
  Natur. Rend. Lincei (9) Mat. Appl., 19 (2008), pp.~25--57.

\end{thebibliography}

\end{document}